\setlist[2]{leftmargin=5mm}
\setlist[enumerate]{label=\rm{(\arabic*)}}
\setlist[enumerate,2]{label=\rm({\it\roman*}), }
\setlist[itemize]{label=\raisebox{0.25ex}{\tiny$\bullet$}}
\newcolumntype{L}{>{$}l<{$}}
\newcolumntype{C}{>{$}c<{$}}
\tikzset{map/.style={row sep=0em, column sep=0em}}
\newcommand{\nicecolor}{Navy}
\theoremstyle{plain}
\newtheorem{theorem}{Theorem}[section]
\newtheorem{proposition}[theorem]{Proposition}
\newtheorem{lemma}[theorem]{Lemma}
\theoremstyle{definition}
\newtheorem{definition}[theorem]{Definition}
\newtheorem{example}[theorem]{Example}
\newtheorem{remark}[theorem]{Remark}
\newtheorem{algorithm}[theorem]{Algorithm}
\newtheorem{notation}[theorem]{Notation}
\numberwithin{equation}{theorem}  
\newcommand{\p}{\mathbb{P}}
\let \P \relax
\newcommand{\P}{\mathbb{P}}
\newcommand{\A}{\mathbb{A}}
\newcommand{\F}{\mathbb{F}}
\newcommand{\Z}{\mathbf Z}
\newcommand{\Q}{\mathbf Q}
\renewcommand{\k}{\mathbf{k}}
\newcommand{\bk}{{\bar \k}}
\newcommand{\id}{\textup{id}}
\newcommand{\Cl}{\mathcal C}
\newcommand{\Dl}{\mathcal D}
\newcommand{\JJ}{\mathcal J}
\newcommand{\XX}{\mathcal X}
\newcommand{\dashto}{\dashrightarrow}
\newcommand{\smallmat}[1]{\left(\begin{smallmatrix}#1\end{smallmatrix}\right)}
\newcommand{\link}[2]{\stackrel{_{#1}\;_{#2}}{\dashrightarrow}}
\newcommand{\linkI}[1]{\stackrel{_{#1}}{\dashrightarrow}}
\newcommand{\linkIII}[1]{\stackrel{_{#1}}{\to}}
\renewcommand{\phi}{\varphi}
\renewcommand{\leq}{\leqslant}
\renewcommand{\geq}{\geqslant}
\DeclareMathOperator{\Bir}{Bir}
\DeclareMathOperator{\Aut}{Aut}
\DeclareMathOperator{\Spec}{Spec}
\DeclareMathOperator{\Pic}{Pic}
\DeclareMathOperator{\PGL}{PGL}
\DeclareMathOperator{\GL}{GL}
\DeclareMathOperator{\Gal}{Gal}
\DeclareMathOperator{\Bs}{Bs}
\DeclareMathOperator{\Bas}{Bas}
\DeclareMathOperator{\Exc}{Exc}
\DeclareMathOperator{\Mat}{Mat}
\DeclareMathOperator{\lcm}{lcm}
\renewcommand{\to}{ \, \tikz[baseline=-.6ex] \draw[->,line width=.5] (0,0) -- +(.5,0); \, }
\renewcommand{\mapsto}{ \, \tikz[baseline=-.6ex] \draw[|->,line width=.5] (0,0) -- +(.5,0); \, }
\newcolumntype{M}{>{$}c<{$}}
\newcommand{\mylabel}[2]{#2\def\@currentlabel{#2}\label{#1}}
\DeclareMathOperator{\Sym}{Sym}
\DeclareMathOperator{\Frob}{Frob}
\newcommand{\ZZZ}{\Z}
\newcommand{\AAA}{\A}
\newcommand{\kk}{\k}
\newcommand{\FFF}{\F}
\newcommand{\PPP}{\p}
\newcommand{\PPPP}{\mathcal P}
\newcommand{\QQQQ}{\mathcal Q}
\newcommand{\comp}{\circ}
\newcommand{\from}{\colon}
\newcommand{\simto}{\stackrel{\sim}{\rightarrow}}
\title[Cremona group over $\F_2$]{Generators of the plane Cremona group over the field with two elements}
\author[Julia Schneider]{Julia Schneider}
\address{Julia Schneider, Universit\"{a}t Basel, Departement Mathematik und Informatik, Spiegelgasse $1$, CH-$4051$ Basel, Switzerland}
\curraddr{Institut für Mathematik, Universit\"at Z\"urich, Winterthurerstrasse 190, CH-8057 Z\"urich, Switzerland}
\email{julia.schneider@math.ch}
\begin{document}

\maketitle

\tableofcontents

\section{Introduction}\label{sec:Introduction}
The Cremona group of rank $n$ over a field $\kk$ is the group $\Bir_\kk(\PPP^n)$ of birational maps of the projective space $\PPP^n$ that are defined over $\kk$. Equivalently, it is the group of field automorphisms $\Aut_{\kk}\kk(x_1,\ldots,x_n)$.
We are interested in the following question:
Is there a ``nice'' set of generators of the plane Cremona group over a fixed field?
The classical Theorem of Noether and Castelnuovo states that over an algebraically closed field, the Cremona group of rank $2$ is generated by $\PGL_3(\kk)$ and the standard Cremona transformation given by $[x:y:z]\mapsto[yz:xz:xy]$ \cite{Noether1870, Castelnuovo1901}.
Over other fields, this question was already studied by Kantor, who gives in 1899 a list of sixteen types of birational maps that generate the Cremona group over the rational numbers $\Q$ \cite{Kantor}.
A (long) list of generators can be found in \cite{Iskovskikh92,IKT93} for any perfect field.
For certain fields such as the field of real numbers, generating sets have been described more explicitly \cite{BM14,RV05,zimmermann18}.
In general, generating sets over non-closed fields are more complicated to describe.

For a perfect field $\kk$, we fix an algebraic closure $\overline\kk$ and equip $\PPP^2$, or more generally any variety $X$ over $\k$, with the action of the absolute Galois group $\Gal(\overline\kk/\kk)=\Aut_\kk(\overline\kk/\kk)$. In this way, a closed point of degree $d$ in $X$ corresponds to a $\Gal(\overline\kk/\kk)$-orbit of $d$ points in $X(\overline\kk)$.
We will mostly be interested in the case when $\kk$ is a finite field. In this case, for every degree there exists a (unique) field extension and its Galois group is cyclic.

\bigskip
In a nutshell, this paper consists of three main parts: In Sections \ref{sec:GaloisDescent} and \ref{sec:PointsGeneralPosition}, we are interested in minimal rational del Pezzo surfaces over a finite field, and {Sarkisov links} between them (see Section~\ref{sec:rankRfibrations} for a definition). Section \ref{section:InfiniteFamilies} deals with minimal rational conic bundles and birational maps between them; here, we are mostly interested in finite fields of characteristic $2$.
Finally, in Section \ref{sec:BirationalMaps} we combine the two parts to describe a generating set of $\Bir_{\F_2}(\p^2)$. This is all preceeded by preliminaries in Section~\ref{sec:preliminaries}.
We present now our results.

\bigskip

Write $\Dl$ for the set of rational del Pezzo surfaces of Picard rank $1$, up to isomorphism, and $\Dl_d$ for those of degree $d$.
Over a perfect field it holds that $\Dl=\Dl_9\cup\Dl_8\cup\Dl_6\cup\Dl_5$.
Over a finite field each of them consists of exactly one element, and they can be described using a birational Galois descent:

\begin{proposition}\label{prop:UniqueMinimaldP}
	Let $\k=\F_{q}$ be a finite field, and $d\in\{9,8,6,5\}$. Then $\Dl_d$ contains exactly one element $X_d$. Moreover, for each $d$ consider the following field extension $L/\k$, surface $Y$, and birational map $\varphi\in\Bir(\p^2_L)$:
	\begin{enumerate}
		\item If $d=9$, let $Y=\p^2$, $L=\k$, and $\varphi=\id_{\p^2}$;
		\item If $d=8$, let $Y=\p^1\times\p^1$, $L=\F_{q^2}$, and $\varphi\colon (x,y)\mapsto (y,x)$;
		\item If $d=6$, let $Y=\p^2$, $L=\F_{q^6}$, and $\varphi\colon [x:y:z]\mapsto [xz:xy:yz]$;
		\item If $d=5$, let $Y=\p^2$, $L=\F_{q^5}$, and $\varphi\colon[x:y:z]\mapsto [xy:y(x-z):x(y-z)]$.
	\end{enumerate}
	Then there exists a birational morphism $\rho\colon (X_d)_L\to Y_L$ such that \[\rho\circ g\circ\rho^{-1}=\langle g\circ\varphi \rangle,\]
	where $g$ is a generator of $\Gal(L/\k)$.
\end{proposition}

We say that a point is in \emph{(del Pezzo-) general position} if its blow-up $Z\to X_d$ is again a del Pezzo surface. In this case, $Z$ is of Picard rank $2$, and gives rise to a Sarkisov link $\chi$. Writing $a$ for the degree of the blown up point, we use the notation $\chi\colon X_d\linkI{a} Z$ if $Z/\p^1$ is a conic bundle, and we write $\chi\colon X_d\link ab X_{e}$ if $Z$ admits a contraction onto a point of degree $b$ on a del Pezzo surface $X_e$ of degree $e=d-a+b$.

We say that two Sarkisov links are \emph{equivalent} if they are equal after pre- and postcomposition with an isomorphism.
In particular, there is a one-to-one correspondence between del Pezzo surfaces of Picard rank $2$, up to isomorphism, and Sarkisov links over $\Spec(\k)$, up to equivalence and taking the inverse. We say that a Sarkisov link is \emph{symmetric} if its source and target are equal.
It is interesting to know whether a Sarkisov link is equivalent to its inverse.

We also describe $\rho\circ\Aut_\k(X_d)\circ\rho^{-1}$ in each of the cases of Proposition~\ref{prop:UniqueMinimaldP}. This gives us an algorithm to count points of a fixed degree on $X_d$, up to $\Aut_\k(X_d)$.
Therefore, by counting points in (del Pezzo-) general position on each $X_d$, one counts the number of Sarkisov links starting at $X_d$.
For $\k=\F_2$, we carry out the computations of points in general position, up to the action of $\Aut_\k(X_d)$, using the computer algebra system sagemath \cite{Sagemath}, and obtain:

\begin{theorem}[Counting Sarkisov links over $\k=\F_2$]\label{thm:CountingSarkisovLinks}
	Let $\k=\F_2$. Let $Z$ be a rational del Pezzo surface with $\rho(Z)=2$. Then either $Z=\p^1\times\p^1$, or $Z$ is a minimal resolution of one of the following Sarkisov links, ordered by $K_Z^2\in\{1,\ldots,8\}$ and the bold number denoting the number of such links up to equivalence,
	where $X_d\in\Dl_d$:
	\[
	\begin{array}{c|rl@{\hskip 2em}rl@{\hskip 2em}rl@{\hskip 2em}rl}\toprule
		1 & \mathbf{38} & \p^2\link88\p^2 & \mathbf{18} & X_8\link77X_8 & \mathbf{11} & X_6\link55 X_6 & \mathbf{12} & X_5\link44 X_5 \\
		2 & \mathbf{10} & \p^2\link77\p^2 & \mathbf{5} & X_8\link66X_8 & \mathbf{4} & X_6\link44X_6 & \mathbf{4} & X_5\link33X_5 \\
		3 & \mathbf{2} & \p^2\link66\p^2 & \mathbf{2} & X_8\link52X_5 & \mathbf{2} & X_6\link33X_6 & \mathbf{} & (X_5\link25 X_8) \\
		4 & \mathbf{1} & \p^2\link51 X_5 & \mathbf{0} & X_8\link44 X_8 & \mathbf{1} & X_6\link22X_6 & \mathbf{} & (X_5\link15 \p^2) \\
		5 & \mathbf{1} &  \p^2\linkI4 Z/\p^1 & \mathbf{1} & X_8\link31X_6 & \mathbf{} & (X_6\link13X_8) & \mathbf{} & ~ \\
		6 & \mathbf{1} & \p^2\link33\p^2 & \mathbf{1} & X_8\linkI2Z/\p^1 & \mathbf{} & ~ & \mathbf{} & ~ \\
		7 & \mathbf{1} & \p^2\link21X_8 & \mathbf{} & (X_8\link12\p^2) & \mathbf{} & ~ & \mathbf{} & ~ \\
		8 & \mathbf{1} & \p^2\linkI{1}\F_1/\p^1 & \mathbf{} &~ & \mathbf{} & ~ & \mathbf{} & ~ \\\bottomrule
	\end{array}
	\]
	(A link $X\link ab X'$ in brackets means that its inverse already appears in the same row.)

	Moreover, every symmetric Sarkisov link in this list is (equivalent to) an involution, and hence the bold numbers denote the number of isomorphism classes of del Pezzo surfaces of Picard rank $2$.
\end{theorem}

It is known for any finite field, that in each case of $K_Z^2\geq 5$ there is a unique del Pezzo surface $Z$ of rank $2$. Also, it is known that in the case of $K_Z^2\in\{1,2\}$ all links are equivalent to involutions (Bertini respectively Geiser involution). For $\k=\F_2$, it comes as a surprise that all the symmetric Sarkisov links with $K_Z^2\in\{3,4\}$ are involutions, too.
It is also worth to mention the absence of a link $X_8\link44 X_8$ for $\k=\F_2$.
Already for $\k=\F_3$, both of these peculiarities disappear.

\begin{remark}\label{rem:CountingSarkisovLinksOverF3}
	Let $\k=\F_3$. Let $Z$ be a rational del Pezzo surface with $\rho(Z)=2$. Then either $Z=\p^1\times\p^1$, or $Z$ is a minimal resolution of one of the following links, where the bold number $c=a+\frac{b}{2}$ denotes the number of such del Pezzo surfaces up to isomorphism, where $a$ is the number of Sarkisov links that are involutions, and $b$ the number of links that are not equivalent to their inverse:
	\[
	\begin{array}{c|rl@{\hskip 2em}rl@{\hskip 2em}rl@{\hskip 2em}rl}\toprule
		1 & \textbf{?} & \p^2\link88\p^2 & \textbf{?} & X_8\link77X_8 & \mathbf{280} & X_6\link55 X_6 & \mathbf{324} & X_5\link44 X_5 \\
		2 & \textbf{?} & \p^2\link77\p^2 & \mathbf{63} & X_8\link66X_8 & \mathbf{42} & X_6\link44X_6 & \mathbf{48} & X_5\link33X_5 \\
		3 & \mathbf{8=5+\frac{6}{2}} & \p^2\link66\p^2 & \mathbf{8} & X_8\link52X_5 & \mathbf{6=5+\frac{2}{2}} & X_6\link33X_6 & \mathbf{} & (X_5\link25 X_8) \\
		4 & \mathbf{2} & \p^2\link51 X_5 & \mathbf{1} & X_8\link44 X_8 & \mathbf{2} & X_6\link22X_6 & \mathbf{} & (X_5\link15 \p^2) \\\bottomrule
	\end{array}
	\]
	(A link $X\link ab X'$ in brackets means that its inverse already appears in the same row. A question mark means that the author's computer was not able to finish the computation with sagemath \cite{Sagemath}; one would need to optimize the algorithm or use a more powerful computer.)
\end{remark}

It is worth mentioning that Lamy and Zimmermann constructed a surjective group homomorphism $\Bir_\kk(\PPP^2)\to\Asterisk_{\mathcal{B}} \ZZZ/2\ZZZ$ for many perfect fields,
where $\mathcal{B}$ consists of all Galois orbits of size $8$, up to automorphisms of the plane, respectively the associated Bertini involutions \cite{LZ19}. In other words, $\mathcal B$ corresponds to rational del Pezzo surfaces of degree $1$ and of Picard rank $2$, admitting a birational morphism to $\p^2$. (In fact, omitting the condition of having a birational morphism to $\p^2$ one obtains a larger quotient $\Bir_\kk(\PPP^2)\to\Asterisk_{\mathcal{B}'} \ZZZ/2\ZZZ\to\Asterisk_{\mathcal{B}} \ZZZ/2\ZZZ$.)
In an involved counting argument of points in general position on nodal cubics, they show that the size of $\mathcal{B}$ is at least the cardinality of the field (see \cite[Proposition 4.17]{LZ19} for finite fields).
In the case of $\FFF_2$ this gives two Bertini involutions but we find in Theorem~\ref{thm:CountingSarkisovLinks} that there are exactly $38$, up to a change of coordinates, i.e. $|\mathcal{B}|=38$. (Moreover, we find that $|\mathcal{B}'|=79$.)
Similarly, Zimmermann constructed a group homomorphism $\Bir_{\k}(\p^2)\to \Asterisk_{\mathcal{G}}\Z/2\Z$ using Geiser involutions on rational del Pezzo surfaces of degree $2$ and of Picard rank $2$ \cite{Zimmermann_RemarkGeiserInvolutions}, carefully studying by which del Pezzo surfaces of Picard rank $3$ they are dominated. Our results show that in the case of $\k=\F_2$ the cardinality of $\mathcal{G}$ equals $23$.
As a side note, for $\k=\F_3$ we can only give lower bounds due to the increased computational complexity: In this case, we have $|\mathcal{B}'|\geq 604$ and $|\mathcal{G}|\geq 153$ (see Remark~\ref{rem:CountingSarkisovLinksOverF3}).

One could continue counting the number of rational del Pezzo surfaces of Picard rank $\geq 3$ over $\k=\F_2$. We will not do this. Let us just remark that the two del Pezzo surfaces $Z_1,Z_2$ of Picard rank $2$ dominating the Sarkisov links $\chi_1,\chi_2\colon X_6\link33 X_6$ from Theorem~\ref{thm:CountingSarkisovLinks} do not have any point in (del Pezzo-) general position \cite[Remark~4.16]{LamySchneider}, and so are not dominated by a del Pezzo surface of Picard rank $3$. In fact, this means that these links are not part of any nontrivial relation, giving a new construction of a surjective group homomorphism $\Bir_{\F_2}(\p^2)\to\Z/2\Z\ast\Z/2\Z$.

\bigskip

Given a rational map $\pi\colon X\dashrightarrow\p^1$, we say that a birational map $\varphi\colon X\dashrightarrow X$ \emph{preserves the fibration} if there exists $\alpha\in\Aut_\k(\p^1)$ such that $\pi\circ\varphi=\alpha\circ\pi$, and we say it \emph{fixes the fibration} if $\alpha=\id_{\p^1}$. Writing $\Bir(X,\pi)$ for the birational maps preserving the fibration, this induces an exact sequence \[1\to \Bir(X/\pi)\to\Bir(X,\pi)\to \Aut_\k^{\pi}(\p^1)\to1.\]

Over a field $\k$, fix the following fibrations $\pi_i\colon\p^2\dashrightarrow\p^1$, $\pi_i([x:y:z])=[F_0:F_1]$ for $i\in\{1,2,4\}$:
\begin{enumerate}
  \item[\mylabel{it:F1}{(Fib1)}] If $i=1$ set $[F_0:F_1] = [y:z]$;
  \item[\mylabel{it:F2}{(Fib2)}] If $i=2$ set $[F_0:F_1] = [xy+y^2:x^2+xz+z^2]$;
  \item[\mylabel{it:F4}{(Fib4)}] If $i=4$ set $[F_0:F_1] = [xz+y^2:x^2+xy+z^2]$.
\end{enumerate}

While it is classical that $\JJ_1:=\Bir_\k(\p^2,\pi_1)$ has a semi-direct product structure $\PGL_2(\k(t))\rtimes\PGL_2(\k)$ for any field $\k$, in this text we are primarily interested in conic fibrations such as $\pi_4$ and $\pi_2$ in the case when $\k$ is a field of characteristic $2$.

If $\k=\F_{2^N}$ with $N$ odd, then the indeterminacy locus $\PPPP_i$ of $\pi_i$ is a point of degree $4$ for $i=4$, (respectively two points of degree $2$ for $i=2$); this is due to the fact that in this case $x^4+x+1$ (respectively $x^2+x+1$) is irreducible over $\k$. In both cases, no three of the four geometric components of $\PPPP_i$ are collinear.
In characteristic $2$, conics have a strange behaviour. For example, for $i=2,4$, every conic in the pencil given by $\pi_i$ is tangent to the line given by $x=0$; this is strange, and we call $x=0$ the \emph{strange line} with respect to $\pi_i$ \cite[Chapter IV, Example 3.8.2]{Hartshorne77}.

\begin{example}\label{ex:ExplicitMaps}
  Let $\k=\F_{2^N}$ with $N$ odd. For $i\in\{2,4\}$ write $t={F_1}/{F_0}$ where $F_0,F_1$ are as in \ref{it:F2}, \ref{it:F4}.
  For $[u:v]\in\p^1(\k(t))$,
  \begin{enumerate}
    \item if $i=4$ set $a(u,v)=\frac{u+tv}{tu^2+v^2}$,
    \item if $i=2$ set $a(u,v)=\frac{tu+v}{tu^2+v^2}$.
  \end{enumerate}
   Then,
  \[\chi_{i,[u:v]}\colon [x:y:z] \mapsto [x:ua(u,v)x+y:va(u,v)x+z]\]
  is an involution in $\Bir_{\k}(\p^2)$, it fixes the fibration $\pi_i$, and it fixes the strange line $x=0$.
\end{example}

More concretely, $\chi_{4,[1:0]}$ is the cubic map $[x:y:z]\mapsto[tx:sx+ty:tz]$, where $[s:t]=[y^2+xz:x^2+xy+z^2]$, and $\chi_{2,[1:0]}$ is the linear map $[x:y:z]\mapsto[x:x+y:z]$.

\begin{proposition}\label{prop:ParametrizationFiberingType}
  Let $\k=\F_{2^N}$ with $N$ odd.
  Let $\PPPP$ be a point of degree $4$, or two points of degree $2$, such that no three of the components are collinear.
  Fix a fibration $\pi\colon\p^2\dashrightarrow\p^1$ given by the pencil of conics through $\PPPP$.
  Let $\varphi\in\Bir_\k(\p^2,\pi)$ be a birational map preserving the fibration $\pi$.
  Then the  following hold:
  \begin{enumerate}
    \item\label{it:ParametrizationFiberingType--fix} If $\varphi$ \emph{fixes} the fibration $\pi$ and fixes the \emph{strange line}, then $\varphi$ is of the form as in Example~\ref{ex:ExplicitMaps}, up to conjugation by an element of $\Aut_\k(\p^2)$.
    \item\label{it:ParametrizationFiberingType--F2} If $\k=\F_2$, then $\varphi$ can be written as a composition of elements $\chi_{i,[u:v]}$ as in Example~\ref{ex:ExplicitMaps}, elements in $\Aut_\k(\p^2)$ and $\JJ_1$.
    In particular, $\varphi$ admits a decomposition into involutions in $\Bir_\k(\p^2)$.
  \end{enumerate}
\end{proposition}

\bigskip

To describe a set of generators of $\Bir_\k(\p^2)$, the starting point is the following set of generators that was obtained in \cite{Schneider-relations} using Iskovskikh's decomposition of birational maps into Sarkisov links \cite{Iskovskikh96}:

\begin{proposition}[{\cite[Corollary 6.14]{Schneider-relations}}]\label{proposition:GeneratorsAnyPerfectField}
  Let $\kk$ be any perfect field. The Cremona group $\Bir_\kk(\PPP^2)$ is generated by its subsets
  \begin{enumerate}
    \item $J_1$, the group of birational maps preserving a pencil of lines (i.e. de Jonqui\`eres maps),
    \item $J=\bigcup J_\PPPP$, the union of groups $J_\PPPP$ of birational maps that preserve a pencil of conics through four points $\PPPP$, of which no three are collinear, such that $\PPPP$ forms either one Galois orbit of size $4$, or two Galois orbits of size $2$,
    \item $G_{\leq 8}$, the group of birational maps such that all Galois orbits in its base locus are of size $\leq 8$.
  \end{enumerate}
\end{proposition}

We obtain:

\begin{theorem}\label{thm:F2GeneratedByInvolutions}
  Let $\k=\F_2$.
  Then $\Bir_{\F_2}(\p^2)$ is generated by all $\chi_{i,[u:v]}$ as in Example~\ref{ex:ExplicitMaps}, $\Aut_\k(\p^2)$, $J_1$, and $111$ additional maps.

  Moreover, $\Bir_{\F_2}(\p^2)$ is generated by involutions.
\end{theorem}

We do not claim that this is a minimal generating set. To know this one would have to study all relations between these generators. However, we know that at least $106$ are not redundant ($38+18+11+12=79$ Bertini, and $10+5+4+4=23$ Geiser \cite[Lemma~2.5]{Zimmermann_RemarkGeiserInvolutions}, and two $X_6\link33 X_6$ because they do not appear in any non-trivial relation).
Moreover, this gives a surjective group homomorphism $\Bir_{\F_2}(\p^2)\to \Asterisk_{i=1}^{106}\Z/2\Z$ that is defined by writing $\varphi\in\Bir_{\F_2}(\p^2)$ as a product of the generators in Theorem~\ref{thm:F2GeneratedByInvolutions}, and sending each of the $106$ generators on the corresponding word while sending all others onto the trivial word.

\bigskip

In an earlier version of this text, Theorem~\ref{thm:F2GeneratedByInvolutions} was proved independently. However, in the meantime Lamy and the author have proved that over any perfect field the plane Cremona group is generated by involutions \cite{LamySchneider}. This text has been revised heavily.
Many statements that have previously been stated solely for $\F_2$ have been generalized to a larger class of fields, and now the text refers in some instances to \cite{LamySchneider} to reduce overlap.

In \cite{LamySchneider}, one of the main ideas to decompose a Sarkisov link of del Pezzo type $\chi\colon X\link dd X'$ into involutions is to find a del Pezzo surface of degree $1$ or $2$ that dominates the minimal resolution of $\chi$, and use the associated Bertini or Geiser involution.
It is important to mention that this strategy fails for $\k=\F_2$ and Sarkisov links $X_6\link33 X_6$ because the associated cubic del Pezzo surface of Picard rank $2$ is not dominated by any del Pezzo surface, as already mentioned above. This is why \cite{LamySchneider} refers to this text for the case $\k=\F_2$: As mentioned in Theorem~\ref{thm:CountingSarkisovLinks}, there are exactly two such links $X_6\link33 X_6$, and they are involutions. So \cite{LamySchneider} needs Proposition~\ref{prop:F2CubicQuarticAreInvolution} that does not rely on \cite{LamySchneider}.

In \cite{LamySchneider}, birational maps in $J_{\PPPP}$ that fix every conic in the pencil are decomposed into involutions using the classical theorem of Cartan--Dieudonné. Here we give a proof using birational geometry in characteristic $2$.
\bigskip

Plane Cremona groups over finite fields have also been studied with a geometric group theory approach in \cite{GLU-finite} (giving an embedding into Neretin groups), and their finite subgroups in \cite{ProkhorovShramov-finite} (giving a bound for the Jordan constant).

\section{Preliminaries}\label{sec:preliminaries}

Let $\k$ be a perfect field. All varieties and morphisms will be assumed to be defined over $\k$, unless stated otherwise. We fix an algebraic closure $\overline\kk$ and equip a variety $X$ over $\k$ with the action of the absolute Galois group $\Gal(\overline\kk/\kk)=\Aut_\kk(\overline\kk/\kk)$. In this way, a closed point $p$ on $X$ corresponds to a $\Gal(\overline\kk/\kk)$-orbit of $d$ points in $X(\overline\kk)$.
In other words, $p_{\bar\k}=\{p_1,\ldots,p_d\}\subset X(\bar\k)$ forms one Galois-orbit.
The integer $d$ corresponds to the degree of the extension of the residue field $\k(p)$ over $\k$, and we will say that $p$ is a point \emph{of degree $d$}. We will call the $p_i$ \emph{geometric components} of $p$. We will use both languages.
Given a fibration $\pi\colon X\to\p^1$, by a \emph{fiber} we will typically mean a fiber of $\pi_{\bar\k}\colon X_{\bar\k}\to\p^1_{\bar\k}$ and consider it as a subvariety of $X_{\bar\k}$, or as a geometric component of a subvariety of $X$.

\subsection{Mori fiber spaces, Sarkisov links, and rank $r$ fibrations}\label{sec:rankRfibrations}

The Sarkisov program is a useful tool for studying generators and relations in Cremona groups. In particular, it states that over a perfect field, any birational map $\p^2\dashrightarrow\p^2$ can be written as a sequence of Sarkisov links.

For surfaces over a perfect field, a \emph{rank $r$ fibration} is a surjective morphism $\pi\colon X\to B$ with connected fibers such that $X$ is a smooth surface, $B$ is either a point or a smooth curve such that $\rho(X/B)=r$, and $-K_X$ is $\pi$-ample \cite{LZ19}. When $B=\Spec \k$ is a point, then rank $r$ fibrations are exactly del Pezzo surfaces of Picard rank $r$. A \emph{Mori fiber space} $X/B$ is a rank $1$ fibration.
A rank $2$ fibration $Z/B$ has exactly two contractions giving a commutative diagram \[\begin{tikzcd}
	&Z\ar[dl,"\eta_1",swap]\ar[dr,"\eta_2"]&\\
	X_1\ar[rr, dashrightarrow,"\chi"]\ar[d,"\pi_1",swap] && X_2\ar[d,"\pi_2"]\\
	B_1\ar[dr] && B_2\ar[dl]\\
	&B,&
\end{tikzcd}\]
where $\pi_i\colon X_i\to B_i$ are Mori fiber spaces. Any birational map $\chi\colon X_1\dashrightarrow X_2$ that can be written in such a diagram is called \emph{Sarkisov link}. In this diagram, the morphisms $\eta_i$ are either isomorphisms, or the blow-up of a point of degree $a_i$ in $X_i$. This gives rise to four types of Sarkisov links:
\begin{center}
	\begin{tabular}{r|cc}
		&$\eta_1$ isomorphism & $\eta_1$ blow-up \\\hline
		$\eta_2$ isomorphism &type IV& type I \\
		$\eta_2$ blow-up &type III&type II.
	\end{tabular}
\end{center}
The birational map $\chi\colon X_1\dashrightarrow X_2$ is uniquely determined by $Z/B$, up to pre- and postcomposing with an isomorphism, and up to taking the inverse.

\bigskip

Write $\Dl_d^r$ for the set of isomorphism classes of rational rank $r$ fibrations $X/\Spec\k$ with $K_X^2=d$, and write $\Cl_d^r$ for the set of isomorphism classes of rational rank $r$ fibrations $X/\p^1$ with $K_X^2=d$.
We will only use the notation for $r=1$ and omit the superscript $r$.

\begin{theorem}[{\cite[Theorem~2.4]{LamySchneider}}]
  Let $X/B$ be a rational Mori fiber space. Then $X$ lies in one of the following sets:
  \[\{\p^2\}, \Dl_5, \Dl_6, \Dl_8, \Cl_5, \Cl_6, \Cl_8,\]
  and
  \begin{itemize}
    \item $\Dl_8$ is the set of surfaces $X_8$ obtained via a link $\p^2\link21 X_8$ (blow up a point of degree $2$, then contract the strict transform of the line through this point),
    \item $\Dl_5\subset \Dl$ is the set of surfaces $X_5$ obtained via a link $\p^2\link51 X_5$ (blow up a point of degree $5$, then contract the strict transform of the conic through this point),
    \item $\Dl_6\subset \Dl$ is the set of surfaces $X_6$ obtained via a link $X_8\link31 X_6$ for some $X_8\in\Dl_8$ (blow up a point of degree $3$, then contract the strict transform of the conic through this point and the point of degree $2$ from $\p^2\link21 X_8$),
    \item $\Cl_8=\{\F_n\mid n\geq 0\}\subset \Cl$ the set of Hirzebruch surfaces with their structure as conic bundle,
    \item $\Cl_5\subset \Cl$ is the set of conic bundles obtained by blowing up a point of degree $4$ on $\p^2$, with fibration given by the pencil of conics through this point,
    \item $\Cl_6\subset \Cl$ is the set of conic  bundles obtained  by blowing up a point of degree $2$ on $X_8\in\Dl_8$, with fibration given by the pencil of conics through this point and the point of degree $2$ from $\p^2\link21 X_8$.
  \end{itemize}
\end{theorem}

\subsection{Finite fields}

Let $\k=\FFF_q$ be a finite field.
For any $d\geq1$, the Galois group $\Gal(\FFF_{q^d}/\k)$ is the cyclic group of order $d$ with generator $\alpha\from\alpha\mapsto\alpha^q$.
The (absolute) Galois group $\Gal(\overline{\FFF_q}/\k)$ is the inverse limit $\varprojlim\Gal(\FFF_{q^d}/\FFF_q)$; see \cite[Chapter 1, Section 1.5]{FriedJarden} for details.
We equip the projective plane with the action of the (absolute) Galois group $\Gal(\overline{\FFF_q}/\k)$.
So the action of the Galois group on $\PPP^2(\overline\k)$ is generated by the Frobenius morphism given by
\[
  \Frob\from [x:y:z]\mapsto [x^q:y^q:z^q],
\]
and the Galois orbit of a point $p\in\PPP^2(\overline{\k})$ consists of the points $\Frob^i(p)$ for $i=0,\ldots, d-1$ for some $d$.
Note that each geometric component of a Galois orbit on $\PPP^2$ of size $d$ is defined over $\FFF_{q^d}$.
Hence, for finite fields the following lemma is applicable for any two sets of four points in general position that consist of Galois orbits of the same sizes:

\begin{lemma}[{\cite[Lemma 6.11]{Schneider-relations}}]\label{lemma:SameFourPoints}
  Let $\kk$ be a perfect field and let $L/\kk$ be a finite Galois extension.
  Let $p_1,\ldots, p_4\in\PPP^2(L)$, no three collinear, and $q_1,\ldots,q_4\in\PPP^2(L)$, no three collinear, be points such that the sets $\{p_1,\ldots,p_4\}$ and $\{q_1,\ldots,q_4\}$ are invariant under the Galois action of $\Gal(L/\kk)$.
  Assume that for all $g\in\Gal(L/\kk)$ there exists $\sigma\in\Sym_4$ such that $g(p_i)=p_{\sigma(i)}$, and $g(q_i)=q_{\sigma(i)}$ for $i=1,\ldots,4$.
  Then, there exists $A\in\PGL_3(\kk)$ such that $Ap_i=q_i$ for $i=1,\ldots,4$.
\end{lemma}

In particular, this implies that over a finite field $\k=\F_q$, there is a unique point of degree $3$, a unique point of degree $4$, and a unique pair of two points of degree $2$ such that no three geometric components are collinear, up to a change of coordinates.
Hence, there is a unique Sarkisov link $\p^2\link33\p^2$, and a unique pencil of conics through a point of degree $4$, or two points of degree $2$. In particular, $|\Cl_5|=|\Cl_6|=1$.

Over more complicated fields such as $\Q$ this is not true: For any prime number $p$ and $\omega$ a root of $x^3-p$ (respectively a root of $x^4-p$), the Galois orbit of $[\omega^2:\omega:1]$ gives a point of degree $3$ (respectively $4$) such that no three of its geometric components are collinear, with non-isomorphic residue fields $\Q(\omega)$ for different prime numbers. Similarly, for every prime number $p$ one obtains a different element of $\Cl_5$ when taking a root of $x^4-p$.

Figure~\ref{figure:FanoPlane} depicts the seven points in $\PPP^2(\FFF_2)$ and the seven lines on $\PPP^2(\overline{\FFF_2})$ that are defined over $\FFF_2$.
It is meant as a reminder that one has to be careful with using the ``usual'' geometric intuition when working over $\FFF_2$.
\begin{figure}[h]
  \begin{center}
    \begin{tikzpicture}[scale=0.7]
    \draw (30:1)  -- (210:2)
          (150:1) -- (330:2)
          (270:1) -- (90:2)
          (90:2)  -- (210:2) -- (330:2) -- cycle
          (0:0)   circle (1);
    \node [fill=black, inner sep=2pt] at (0:0) {};
    \node [fill=black, inner sep=2pt] at (30:1) {};
    \node [fill=black, inner sep=2pt] at (90:2) {};
    \node [fill=black, inner sep=2pt] at (150:1) {};
    \node [fill=black, inner sep=2pt] at (210:2) {};
    \node [fill=black, inner sep=2pt] at (270:1) {};
    \node [fill=black, inner sep=2pt] at (330:2) {};
  \end{tikzpicture}
  \end{center}
  \caption{The seven $\FFF_2$-points and the seven $\FFF_2$-lines on $\PPP^2$}
  \label{figure:FanoPlane}
\end{figure}

\subsection{General position on $\p^2$ and $\p^1\times\p^1$ and some birational maps}
\label{section:GeneralPositionOnP2AndP1P1}

We fix the notation for $\P^1\times\P^1$ with coordinates $([x_0:x_1],[y_0:y_1])$:
A \textit{$(a,b)$-curve} on $\P^1\times\P^1$ is given by a polynomial of bidegree $(a,b)$, where $a$ is the degree in $x_0,x_1$ and $b$ is the degree in $y_0,y_1$.
A \textit{fiber} $f$ is a $(0,1)$-curve, and a \textit{section} $s$ is a $(1,0)$-curve.
So an $(a,b)$-curve is linearly equivalent to $as+bf$ and has self-intersection $2ab$.

\begin{lemma}\label{lemma:DelPezzoDescription}
  Let $\k$ be an algebraically closed field.
  Let $\pi\from S\to \P^2$ be a birational morphism, where $S$ is a smooth projective surface. The following conditions are equivalent:
  \begin{enumerate}
    \item\label{item:DelPezzoDescription--ample} $-K_S$ is ample (that is $S$ is a del Pezzo surface);
    \item\label{item:DelPezzoDescription--generalposition} the morphism $\pi$ is the blow-up at $0\leq r\leq 8$ proper points on $\P^2$ such that no $3$ are collinear, no $6$ are on the same conic, no $8$ lie on a cubic having a double point at one of them;
    \item\label{item:DelPezzoDescription--selfintersection} $K_S^2>0$ and any irreducible curve of $S$ has self-intersection $\geq -1$;
    \item\label{item:DelPezzoDescription--CK} $K_S^2>0$ and $C\cdot(-K_S)>0$ for any effective divisor $C$.
  \end{enumerate}

  Moreover, if there is a birational morphism $\rho\from S \to \P^1\times\P^1$, then the conditions \ref{item:DelPezzoDescription--ample} to \ref{item:DelPezzoDescription--CK} are equivalent to
  \begin{enumerate}[resume]
    \item\label{item:DelPezzoDescription--P1xP1} the morphism $\rho$ is the blow-up at $0\leq s\leq 7$ proper points on $\P^1\times\P^1$ such that no $2$ lie on a $(1,0)$- or $(0,1)$-curve, no $4$ lie on a $(1,1)$-curve,
    no $6$ lie on a $(2,1)$- or $(1,2)$-curve, not all $7$ lie on a $(2,2)$-curve having a double point at one of them.
  \end{enumerate}
\end{lemma}

\begin{proof}
  The equivalences of \ref{item:DelPezzoDescription--ample} to \ref{item:DelPezzoDescription--CK} are standard, see for example \cite[Theorem 1]{demazure80}.
  We show the equivalence of \ref{item:DelPezzoDescription--P1xP1} with the other conditions.
  The condition $s\leq 7$ is equivalent to $K_S^2\geq 1$ in \ref{item:DelPezzoDescription--selfintersection}.
  The Picard group $\Pic(S)$ is generated by $s$, $f$ and the exceptional divisors $E_1,\ldots, E_s$, where $s$ respectively $f$ denote the pull-back of a $(1,0)$- respectively $(0,1)$-curve.
  Note that $K_S=-2s-2f+\sum E_i$.
  Let $C$ be any irreducible curve on $S$, so $C\sim as+bf-\sum a_iE_i$ for some $a,b,a_1,\ldots,a_r\geq 0$.
  One computes
  \[
    C\cdot(-K_S)=2(a+b)-\sum a_i
  \]
  and (with the adjunction formula) the arithmetic genus is
  \[
    p_a(C)=\frac{C\cdot (C+K_S)}{2}+1=ab-(a+b)+1-\frac{1}{2}\sum a_i(a_i-1).
  \]
  For $a$ and $b$ at most $2$ we find
  \begin{center}
    \begin{tabular}{M | M | M} 
      (a,b) & ab-(a+b)+1 & 2(a+b)\\
      \hline
      (1,0) & 0 & 2\\
      (1,1) & 0 & 4\\
      (2,1) & 0 & 6\\
      (2,2) & 1 & 8
    \end{tabular}
  \end{center}
  In particular, the adjunction formula implies that if $(a,b)$ is $(1,0)$, $(1,1)$, or $(2,1)$ then $C$ is smooth, and if $(a,b)=(2,2)$ then $C$ has at most one double point.
  Assuming that $S$ is a del Pezzo surface and therefore satisfies condition \ref{item:DelPezzoDescription--CK}, we have $2(a+b)>\sum a_i$ and so \ref{item:DelPezzoDescription--P1xP1} follows.

  We assume now that condition~\ref{item:DelPezzoDescription--P1xP1} holds.
  The assumption implies that for $a,b\leq 2$ we have with the above table that $C\cdot (-K_S)>0$.
  For the remaining cases we may assume by symmetry that $a\geq 3$ and $b\leq a$.
  Since $C$ is irreducible, $C$ has no common components with $-K_S=2s+2f+\sum E_i$ and so $C\cdot(-K_S)\geq 0$.
  We assume $C\cdot (-K_S)=0$ and find a contradiction.
  The assumption gives with the adjunction formula
  \begin{align*}
      \sum a_i &= 2(a+b),\\
      \sum a_i^2 &= 2ab+2-2g,
  \end{align*} where $g=g(C)$.
  Using the Cauchy-Schwartz inequality we find
  \[
    4(a+b)^2=\left(\sum a_i\right)^2\leq 7\sum a_i^2 = 7(2ab+2-2g).
  \]
  This gives
  \[
    2(a-b)^2+ab\leq 7(1-g)\leq 7.
  \]
  One checks that the left hand side is $\geq 8$ for all $a\geq 3$, giving a contradiction. Indeed, if $b=a$, then $ab\geq 9$; if $b=a-1$, then the left hand side is at least $2+3\cdot 2=8$; if $b\leq a-2$ then the left hand side is at least $2\cdot 2^2=8$.
\end{proof}

The following lemma is very simple, but immensly helpful to check computationally the conditions for general position on a del Pezzo surface $S$, seen as a $\k$-structure on $\P^2$ or $\P^1\times\P^1$.

\begin{lemma}\label{lemma:MatrixForComputation}
  Let $S=\P^2$ (respectively $S=\P^1\times\P^1$), and let $p_1,\ldots,p_n$ be $n$ points on $S$.
  Let $s\leq n$.
  Then, there exists a curve of degree $d$ (respectively of bidegree $(a,b)$) through $p_1,\ldots,p_n$ and with singular points $p_{i_1},\ldots, p_{i_s}$ if and only if the kernel of the matrix $M\in \Mat_{n+3s,N}(\k)$ (respectively $M\in \Mat_{n+2s,N}$) has positive dimension,
  where $N$ is the number of monomials of degree $d$ in $\k[x,y,z]$ (respectively of bidegree $(a,b)$ in $\k[x_0,x_1,y_0,y_1]$) and $M=\left(\begin{smallmatrix}A\\ B\end{smallmatrix}\right)$ is such that
  \begin{enumerate}
    \item $A\in \Mat_{n, N}$ is the matrix whose $k$-th row is
      \[\left(\begin{array}{r r r }
        m_1(p_{i}) & \cdots & m_N(p_{i})
      \end{array}\right)\]
      for $i=1,\ldots,n$, where the $m_j$ denote all monomials in $\k[x,y,z]_d$ of degree $d$ (respectively in $\k[x_0,x_1,y_0,y_1]$ of bidegree $(a,b)$), and
    \item \begin{enumerate}
      \item in the case of $S=\P^2$, $B\in \Mat_{3s, N}$ is the matrix that consists of the $3s$ rows of the form
      \[\left(\begin{array}{r r r }
          \frac{\partial m_1}{\partial x}(p_{i_k}) & \cdots & \frac{\partial m_N}{\partial x}(p_{i_k}) \\
          \frac{\partial m_1}{\partial y}(p_{i_k}) & \cdots & \frac{\partial m_N}{\partial y}(p_{i_k}) \\
          \frac{\partial m_1}{\partial z}(p_{i_k}) & \cdots & \frac{\partial m_N}{\partial z}(p_{i_k}) \\
        \end{array}\right),\]
        for each $k=1,\ldots, s$,
        \item in the case of $S=\P^1\times\P^1$, we choose for each $k=1,\ldots, s$ affine coordinates $x\in\{x_0,x_1\}$, $y\in\{y_0,y_1\}$ and take the matrix $B\in \Mat_{2s, N}$ that consists of the $2s$ rows of the form
        \[\left(\begin{array}{r r r }
            \frac{\partial m_1}{\partial x}(p_{i_k}) & \cdots & \frac{\partial m_N}{\partial x}(p_{i_k}) \\
            \frac{\partial m_1}{\partial y}(p_{i_k}) & \cdots & \frac{\partial m_N}{\partial y}(p_{i_k})
          \end{array}\right)\]
          for each $k=1,\ldots, s$.
    \end{enumerate}
  \end{enumerate}
\end{lemma}

\begin{proof}
  A curve $C$ in $\P^2$ of degree $d$ is given by a polynomial $F$ of degree $d$, which is of the form $\sum_{i=1}^N \lambda_i m_i$, where $m_1,\ldots,m_N$ are all monomials in $\k[x,y,z]$ of degree $d$.
  Taking the (row) vector $v=(\lambda_1,\ldots,\lambda_n)$, the linear equation $A\cdot v=0$ means that $p_1,\ldots,p_n$ lie on the curve $C$.
  Then, $B\cdot v=0$ means that the $p_{i_k}$ for $k=1,\ldots,s$ are singular points of the curve $C$.

  The same argument works for a polynomial of degree $(a,b)$.
\end{proof}

Note that \[N=\begin{cases}\frac{(d+2)(d+1)}{2} & \text{if } S=\p^2\\ (a+1)(b+1) & \text{if } S=\p^1\times\p^1\end{cases}.\]

\begin{lemma}\label{lemma:DescriptionOfBertiniEtc}
  Let $\k$ be an algebraically closed field, and let $a,b$ be two integers. If they are equal, write $d=a=b$.
  Let $p_1,\ldots,p_a\in X$ be a set of $a$ points in {(del Pezzo--)} general position, and let $\sigma\colon Z\to X$ be the blow-up at these points with exceptional divisors $E_1,\ldots,E_a$.
  Let $\sigma'\colon Z\to X'$ be the contraction of some (pairwise disjoint) $(-1)$-curves $E_1',\ldots,E_b'$ on $Z$.
  If $X$, respectively $X'$, is $\p^2$, write $L$, respectively $L'$ for the pull-back under $\sigma$ (respectively $\sigma'$) of a general line.
  If $X$ (respectively $X'$) is $\p^1\times \p^1$, write $s,f$ (respectively $s',f'$) for the pull-back under $\sigma$ (respectively $\sigma'$) of the sections with respect to the two fibrations $\p^1\times\p^1\to\p^1$.
  Assume that $a,b$ and $E_j'$ are as below. Then $L'$ (respectively $s',f'$) are as below.

    \begin{longtable}{CLLLL}
      \multicolumn{4}{L}{X=X'=\p^2:}\\\toprule{}
      3 & E_j'=L-E_k-E_l, & \{j,k,l\}=\{1,2,3\} & \multicolumn{2}{L}{L'=2L-E_1-E_2-E_3}\\[\medskipamount]
      5 & E_1'=2L-\sum_{i=1}^5 E_i,& & \multicolumn{2}{L}{L'=3L-2E_1-\sum_{i=2}^5E_i}\\
       &  E_j'=L-E_1-E_j&  j=2,\ldots,5, &  \\[\medskipamount]
      6 & E_j'=2L-\sum_{i\neq j}E_i,& j=1,\ldots,6 & \multicolumn{2}{L}{L'=5L-2\sum_{i=1}^6E_i} \\[\medskipamount]
      7 & E_j'=3L-2E_j-\sum_{i\neq j}E_i,& j=1,\ldots,7 & \multicolumn{2}{L}{L'=8L-3\sum_{i=1}^7 E_i} \\[\medskipamount]
      8 & E_j'=6L-3E_j-2\sum_{i\neq j}E_i,& j=1,\ldots,8 & \multicolumn{2}{L}{L'=17L-6\sum_{i=1}^8E_i}.\\[\bigskipamount]
     \multicolumn{5}{L}{X=\p^2, X'=\p^1\times\p^1:}\\\toprule{}
     (2,1) & E_1'=L-E_1-E_2 & &   s'=L-E_1, & f'=L-E_2\\
     & & & \multicolumn{2}{L}{L=s'+f'-E_1'} \\[\medskipamount]
     (4,3) & E_j'=L-E_j-E_4 & j=1,2,3 &  s'=L-E_4, &f'=2L-\sum_{i=1}^4 E_i\\
     & & & \multicolumn{2}{L}{L=2s'+f'-\sum_{j=1}^3 E_j'}\\[\medskipamount]
    (6,5) & E_j'=2L-\sum_{k\neq j} E_k & j=1,\ldots,4 &\multicolumn{2}{L}{s'=3L-\sum_{i=1}^4 E_i-E_5-2E_6} \\
    &E_5'=L-E_5-E_6 & & \multicolumn{2}{L}{f'=3L-\sum_{i=1}^4 E_i-2E_5-E_6}\\
    & & & \multicolumn{2}{L}{L=3(s'+f')-2\sum_{j=1}^4E_j'-E_5'.}\\[\medskipamount]
    \multicolumn{4}{L}{X=X'=\p^1\times\p^1:}\\\toprule{}
     4 & E_j'=s+f-\sum_{i\neq j}E_i & j=1,\ldots,4 & \multicolumn{2}{L}{s'=s+2f-\sum_{i=1}^4 E_i,}\\
     & & &\multicolumn{2}{L}{f'=2s+f-\sum_{i=1}^4E_i.}
  \end{longtable}
\end{lemma}

\begin{proof}
  To see that the contraction $\sigma'\from Z\to X'$ exists, it is enough to check that the $E_i'$ are $(-1)$-curves that do not intersect pairwisely, that is, $E_i'^2=-1$ and $E_i\cdot E_j=0$ for $i\neq j$.
  If $X=X'=\p^2$, write $L'=dL-\sum_{i=1}^d a_iE_i$, and find the values for $d$ and $a_i$ via the equations $0=L'\cdot E_j'$, $3=(-K_X)\cdot L'$ (where $-K_X=3L-\sum_{i=1}^d E_i$) and $1=(L')^2$.
\end{proof}

\subsection{Generators of $\PGL_2(\k)$ and $\PGL_3(\k)$}
We recall some elementary generating sets of the projective linear groups $\PGL_2(\k)$ and $\PGL_3(\k)$.

\begin{lemma}\label{lemma:GeneratorsOfPGL2}
  Let $\kk$ be any field.
  Then $\PGL_2(\kk)$ is generated by the matrices
  \[
  \left(\begin{smallmatrix}0 & 1\\ 1&0\end{smallmatrix}\right),
  \left(\begin{smallmatrix}1 & 1\\ 0&1\end{smallmatrix}\right),
    \left(\begin{smallmatrix}1 & 0\\ 0&\lambda\end{smallmatrix}\right)
  \]
  for $\lambda\in\kk^*$.
  In particular, if $\kk=\FFF_2$ then $\PGL_2(\FFF_2)\simeq \Sym_3$ is generated by the two involutions $\left(\begin{smallmatrix}0 & 1\\ 1&0\end{smallmatrix}\right)$ and $\left(\begin{smallmatrix}1 & 1\\ 0&1\end{smallmatrix}\right)$.

  Moreover, $\PGL_2(\kk)$ is generated by involutions, for any field $\kk$.
\end{lemma}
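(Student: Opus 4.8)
The plan is to prove the three assertions in turn, reducing everything to elementary matrix manipulations. For the first claim, write $H\subseteq\PGL_2(\kk)$ for the subgroup generated by the three given matrices $w=\left(\begin{smallmatrix}0&1\\1&0\end{smallmatrix}\right)$, $u=\left(\begin{smallmatrix}1&1\\0&1\end{smallmatrix}\right)$ and the $d_\lambda=\left(\begin{smallmatrix}1&0\\0&\lambda\end{smallmatrix}\right)$. First I would observe that $H$ already contains every elementary and diagonal class: conjugating $u$ by $d_\lambda$ gives $\left(\begin{smallmatrix}1&\lambda^{-1}\\0&1\end{smallmatrix}\right)$, so all upper unipotent matrices lie in $H$, and conjugating these by $w$ yields all lower unipotent matrices; moreover in $\PGL_2$ any diagonal class equals some $d_\lambda$. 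The claim then follows from Gaussian elimination: left multiplication by these generators realises the row operations ``swap the two rows'', ``rescale a row'', and ``add a multiple of one row to another'', and by such operations any $A\in\GL_2(\kk)$ can be reduced to a scalar matrix, i.e.\ to the identity of $\PGL_2(\kk)$. Equivalently, one invokes the Bruhat decomposition $\GL_2(\kk)=B\sqcup BwB$ with $B$ the upper triangular Borel, noting that $B$ is generated by the diagonal and upper unipotent classes.

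For the statement over $\FFF_2$, note that $\FFF_2^*=\{1\}$ forces $d_1=\mathrm{id}$, so the generating set collapses to the two elements $w$ and $u$. Since $\mathrm{char}\,\FFF_2=2$ we have $u^2=\left(\begin{smallmatrix}1&0\\0&1\end{smallmatrix}\right)$, so both $w$ and $u$ are involutions. As $\PGL_2(\FFF_2)=\GL_2(\FFF_2)$ acts faithfully on the three points of $\PPP^1(\FFF_2)$, it is isomorphic to $\Sym_3$, and I would finish by checking that $wu$ has order $3$, so that the dihedral subgroup $\langle w,u\rangle$ already exhausts $\Sym_3$.

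For the final assertion I would exhibit each of the three generating families as a product of involutions, which suffices by the first claim. The element $w$ is itself an involution. For the diagonal part, introduce the antidiagonal matrices $a_\mu=\left(\begin{smallmatrix}0&\mu\\1&0\end{smallmatrix}\right)$; since $a_\mu^2=\mu\cdot\mathrm{id}$, each $a_\mu$ is an involution in $\PGL_2(\kk)$, and a direct computation gives $a_1a_\lambda=d_\lambda$, so every $d_\lambda$ is a product of two involutions. For $u$ I would split by characteristic: if $\mathrm{char}\,\kk=2$ then $u^2=\mathrm{id}$ and $u$ is already an involution, while otherwise $u=\left(\begin{smallmatrix}1&0\\0&-1\end{smallmatrix}\right)\left(\begin{smallmatrix}1&1\\0&-1\end{smallmatrix}\right)$ writes $u$ as a product of two trace-zero matrices, each of which squares to a scalar and hence is an involution of $\PGL_2(\kk)$. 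The main point to watch is this last step: in $\PGL_2$ the involutions are precisely the classes of non-scalar matrices of trace zero, so I must ensure the factors used for $u$ and for $d_\lambda$ are genuinely non-scalar in every characteristic (this is exactly where the case distinction for $u$ is unavoidable, $u$ being an honest involution precisely when $\mathrm{char}\,\kk=2$). Granting this, all three families lie in the subgroup generated by involutions, so $\PGL_2(\kk)$ is generated by involutions.
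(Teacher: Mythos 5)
Your proof is correct and follows essentially the same route as the paper: elementary-matrix (Gaussian elimination) generation for the first claim, the faithful action on the three points of $\PPP^1(\FFF_2)$ for the identification with $\Sym_3$, and products of non-scalar trace-zero (hence involutive) classes — antidiagonal factors for the diagonal matrices — for the last claim. The only cosmetic difference is in handling the unipotent generator: the paper writes it uniformly as a product of three trace-zero involutions valid in every characteristic, whereas you split by characteristic and use two factors when $\chara\kk\neq 2$; both arguments are valid.
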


\begin{proof}
  The general linear group $\GL_n(\kk)$ is generated by elementary matrices.
  For the case $n=2$ they are of the form
  \[
  \left(\begin{smallmatrix}0 & 1\\ 1&0\end{smallmatrix}\right),
  \left(\begin{smallmatrix}\lambda & 0\\ 0&1\end{smallmatrix}\right),
  \left(\begin{smallmatrix}1 & 0\\ 0&\lambda\end{smallmatrix}\right),
  \left(\begin{smallmatrix}1 & \lambda\\ 0&1\end{smallmatrix}\right),
  \left(\begin{smallmatrix}1 & 0\\ \lambda&1\end{smallmatrix}\right).
  \]
  Since $\left(\begin{smallmatrix}0 & 1\\ 1&0\end{smallmatrix}\right)
  \left(\begin{smallmatrix}1 & \lambda\\ 0&1\end{smallmatrix}\right)
  \left(\begin{smallmatrix}0 & 1\\ 1&0\end{smallmatrix}\right) = \left(\begin{smallmatrix}1 & 0\\ \lambda&1\end{smallmatrix}\right)$, we do not need $\left(\begin{smallmatrix}1 & 0\\ \lambda&1\end{smallmatrix}\right)$.
  In $\PGL_2$, $\left(\begin{smallmatrix}\lambda & 0\\ 0&1\end{smallmatrix}\right) = \left(\begin{smallmatrix}1 & 0\\ 0&\lambda\end{smallmatrix}\right)^{-1}$, and $\left(\begin{smallmatrix}1 & 0\\ 0&\lambda^{-1}\end{smallmatrix}\right)
  \left(\begin{smallmatrix}1 & 1\\ 0&1\end{smallmatrix}\right)
  \left(\begin{smallmatrix}1 & 0\\ 0&\lambda\end{smallmatrix}\right) = \left(\begin{smallmatrix}1 & \lambda\\ 0&1\end{smallmatrix}\right)$.
  The first part of the statement follows.
  For $\kk=\FFF_2$, the projective linear group is isomorphic to $\Sym_3$ via the action on the three $\FFF_2$-points on $\PPP^1$.

  Finally, note that all matrices in $\PGL_2(\kk)$ of the form  $\left(\begin{smallmatrix}a & b\\ c&-a\end{smallmatrix}\right)$ are involutions.
  As $\left(\begin{smallmatrix}1 & 0\\ 0&\lambda\end{smallmatrix}\right) = \left(\begin{smallmatrix}0 & 1\\ \lambda&0\end{smallmatrix}\right)
  \left(\begin{smallmatrix}0 & 1\\ 1&0\end{smallmatrix}\right)$ and
  $\left(\begin{smallmatrix}1 & 1\\ 0&1\end{smallmatrix}\right) =
  \left(\begin{smallmatrix}0 & 1\\ 1&0\end{smallmatrix}\right)
  \left(\begin{smallmatrix}1 & 0\\ 1&-1\end{smallmatrix}\right)
  \left(\begin{smallmatrix}0 & 1\\ -1&0\end{smallmatrix}\right)$,
  the three types of generating matrices are generated by involutions.
  This works in any characteristic.
\end{proof}

\begin{lemma}\label{lemma:PGL3GeneratedByInvolutions}
  Let $\kk$ be a field.
  The projective linear group $\PGL_3(\kk)$ is generated by involutions if and only if every element of $\kk$ is a cube.
\end{lemma}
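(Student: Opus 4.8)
The plan is to route everything through the determinant. Since rescaling a representative $A\in\GL_3(\kk)$ by $s\in\kk^*$ multiplies $\det A$ by $s^3$, the assignment
\[
\delta\from\PGL_3(\kk)\to\kk^*/(\kk^*)^3,\qquad [A]\mapsto \det(A)\cdot(\kk^*)^3,
\]
is a well-defined group homomorphism, and it is surjective because $\delta([\mathrm{diag}(\lambda,1,1)])=\lambda\cdot(\kk^*)^3$ for every $\lambda\in\kk^*$. The crucial feature is that the target is abelian of exponent dividing $3$.

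For the ``only if'' direction I would argue as follows. If $g\in\PGL_3(\kk)$ is an involution then $\delta(g)^2=\delta(g^2)=1$, so $\delta(g)$ has order dividing $2$; but in a group of exponent $3$ the only such element is the identity, whence $\delta(g)=1$. If $\PGL_3(\kk)$ is generated by involutions, then $\delta$ vanishes on a generating set and is therefore trivial. As $\delta$ is surjective this forces $\kk^*/(\kk^*)^3=1$, i.e.\ every element of $\kk^*$ is a cube; since $0=0^3$, every element of $\kk$ is a cube.

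For the ``if'' direction, the cube hypothesis makes the target of $\delta$ trivial, which says precisely that every class in $\PGL_3(\kk)$ has a representative of determinant $1$: given $A$, pick $\mu$ with $\mu^3=\det A$ and replace $A$ by $\mu^{-1}A$. Hence $\PGL_3(\kk)$ is the image of $\mathrm{SL}_3(\kk)$, and the latter is generated by the elementary transvections $I+\lambda e_{ij}$ ($i\neq j$, $\lambda\in\kk$). It then suffices to exhibit each transvection as a product of involutions. I would use the diagonal involution $D_i$ having $-1$ in position $i$ and $1$ elsewhere: conjugation by $D_i$ sends $I+\lambda e_{ij}$ to $I-\lambda e_{ij}=(I+\lambda e_{ij})^{-1}$. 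Since $D_i^2=I$, the elementary observation that $\sigma g\sigma^{-1}=g^{-1}$ together with $\sigma^2=1$ gives $g=\sigma\cdot(\sigma g)$ with $(\sigma g)^2=1$; this expresses $I+\lambda e_{ij}$ as the product of the two determinant-$(-1)$ involutions $D_i$ and $D_i(I+\lambda e_{ij})$.

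The main point requiring care is characteristic $2$, where this last construction degenerates: there $D_i=I$ is trivial in $\PGL_3(\kk)$, so it is no longer a genuine involution. However, in characteristic $2$ one has $-\lambda=\lambda$, so $(I+\lambda e_{ij})^2=I+2\lambda e_{ij}=I$ and the transvection is itself an involution; thus the conclusion still holds, in fact more directly. Away from characteristic $2$ the two matrices $D_i$ and $D_i(I+\lambda e_{ij})$ are non-scalar, hence represent nontrivial involutions of $\PGL_3(\kk)$, and the argument goes through. Combining the three paragraphs yields the claimed equivalence.
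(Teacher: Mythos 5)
Your proof is correct. In the ``only if'' direction you and the paper argue identically in substance: both consider the determinant-induced surjective homomorphism $\PGL_3(\kk)\twoheadrightarrow\kk^*/(\kk^*)^3$ and note that involutions must have trivial image, so generation by involutions would force the target to be trivial; the paper checks by hand that $\kk^*/(\kk^*)^3$ has no element of order $2$, while you deduce this from the target having exponent dividing $3$ --- the same content. The ``if'' direction is where you take a genuinely different route. The paper works with the elementary generating set of $\GL_3(\kk)$ and spends the cube hypothesis on the diagonal generators: a cube root of $\lambda$ is used to write $\mathrm{diag}(\lambda,1,1)$ as a product of explicit anti-diagonal involutions $M_a=\left(\begin{smallmatrix}0&a&0\\a^{-1}&0&0\\0&0&1\end{smallmatrix}\right)$, and the unipotent generators are then handled by an explicit product of involutions plus conjugation. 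You spend the hypothesis exactly once, to show that every class in $\PGL_3(\kk)$ has a representative of determinant $1$, i.e.\ that $\PGL_3(\kk)$ is the image of $\mathrm{SL}_3(\kk)$; after that only the transvections $I+\lambda e_{ij}$ need to be written as products of involutions, which the dihedral trick ($\sigma g\sigma^{-1}=g^{-1}$ and $\sigma^2=1$ give $g=\sigma\cdot(\sigma g)$ with $(\sigma g)^2=1$) does uniformly with $\sigma=D_i$, and your separate treatment of characteristic $2$ --- where $D_i$ becomes scalar but the transvections are themselves involutions since $e_{ij}^2=0$ --- is exactly the right fix for the degenerate case. Your version is shorter and isolates where the cube hypothesis enters; the paper's version is more computational but produces completely explicit involutions, consistent with the explicit style it also uses for $\PGL_2$ in Lemma~\ref{lemma:GeneratorsOfPGL2}.
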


\begin{proof}
  First, we assume that each element of $\kk$ is a cube.
  Permutation matrices are generated by involutions.
  As $\GL_3(\kk)$ is generated by elementary matrices, it is enough to check that $\left(\begin{smallmatrix}1 & \lambda & 0\\ 0&1 &0\\ 0&0&1\end{smallmatrix}\right)$ and $\left(\begin{smallmatrix}\lambda & 0 & 0\\ 0&1 &0\\ 0&0&1\end{smallmatrix}\right)$ in $\PGL_3(\kk)$ are generated by involutions for all $\lambda\in\kk^*$.
  For all $a\in\kk^*$, the matrix $M_a=\left(\begin{smallmatrix}0 & a & 0\\ a^{-1}&0 &0\\ 0&0&1\end{smallmatrix}\right)$ is an involution, and $M_aM_1=\left(\begin{smallmatrix}a & 0 & 0\\ 0&a^{-1} &0\\ 0&0&1\end{smallmatrix}\right)$.
  For $\lambda\in\kk^*$, let $a\in\kk$ be such that $a^3=\lambda$ and set $b=a^2$.
  Then, $\left(\begin{smallmatrix}\lambda & 0 & 0\\ 0&1 &0\\ 0&0&1\end{smallmatrix}\right) = \left(\begin{smallmatrix}1 & 0 & 0\\ 0&a^{-1} &0\\ 0&0&a\end{smallmatrix}\right) \left(\begin{smallmatrix}b & 0 & 0\\ 0&1 &0\\ 0&0&b^{-1}\end{smallmatrix}\right)$  is generated by involutions, where the equality holds in $\PGL_3(\kk)$.
  Observe that
  \[
    \left(\begin{smallmatrix}1& 1 & 0\\ 0&1 &0\\ 0&0&1\end{smallmatrix}\right)= \left(\begin{smallmatrix}0 & 1 & 0\\ 1&0 &0\\ 0&0&1\end{smallmatrix}\right) \left(\begin{smallmatrix}1 & 0 & 0\\ 1&-1 &0\\ 0&0&1\end{smallmatrix}\right) \left(\begin{smallmatrix}0 & 1 & 0\\ 1&0 &0\\ 0&0&1\end{smallmatrix}\right) \left(\begin{smallmatrix}-1 & 0 & 0\\ 0&1 &0\\ 0&0&1\end{smallmatrix}\right)
  \]
  is a product of involutions, and hence
  \[
    \left(\begin{smallmatrix}1& \lambda & 0\\ 0&1 &0\\ 0&0&1\end{smallmatrix}\right)= \left(\begin{smallmatrix}1 & 0 & 0\\ 0&\lambda^{-1} &0\\ 0&0&1\end{smallmatrix}\right) \left(\begin{smallmatrix}1 & 1 & 0\\ 0&1 &0\\ 0&0&1\end{smallmatrix}\right) \left(\begin{smallmatrix}1 & 0 & 0\\ 0&\lambda &0\\ 0&0&1\end{smallmatrix}\right)
  \]
  is generated by involutions.

  For the other direction, we assume that not every element of $\kk$ is a cube.
  We set
  \[
    G = \kk^*/\{a^3\mid a \in\kk^*\},
  \]
  which is a non-trivial abelian group with $3$-torsion.
  The determinant map $\GL_3(\kk)\to\kk^*$ induces a surjective group homomorphism $\PGL_3(\kk)\twoheadrightarrow G$.
  As $G$ does not contain any element of order $2$, the image of every involution is trivial.
  Hence, the group $\PGL_3(\kk)$ being generated by involutions contradicts the surjectivity of the group homomorphism.
\end{proof}

\subsection{Groups preserving a fibration}

Let $\k$ be any field, and let $\pi\colon\p^2\dashrightarrow \p^1$ be a dominant rational map.
We say that a map $f\in\Bir_{\k}(\PPP^2)$ \textit{preserves the fibration} $\pi\from\PPP^2\dashto\PPP^1$ if there exists an automorphism $\alpha\in\Aut_{\k}(\PPP^1)$ such that
\[
    \pi\comp f = \alpha\comp\pi.
\]
We denote the group of all such maps by $\Bir_\k(\p^2,\pi)$.
The group homomorphism $\Bir_\k(\p^2,\pi)\to\Aut_{\k}(\PPP^1)$ given by $f\mapsto\alpha$ induces an exact sequence
\[
1\rightarrow \Bir_\k(\p^2/\pi) \rightarrow \Bir_\k(\p^2,\pi) \rightarrow \Aut_k^{\pi}(\p^2)\rightarrow 1,
\]
and so $\Bir_\k(\p^2,\pi)$ is generated by $\Bir_\k(\p^2/\pi)$ and a set that surjects onto the image $\Aut_k^{\pi}(\p^2)$. We say that the elements of $\Bir_\k(\p^2/\pi)$ \textit{fix the fibration}.

We recall:
\begin{lemma}\label{lemma:BirMapsPreservingFibrationAreAutoOverFunctionField}
  Let $\kk$ be a field, and let $\pi\from\PPP^2\dashto\PPP^1$ be a rational dominant map defined over $\kk$ with generic fiber $C\subset \PPP^2_{\kk(\PPP^1)}$.
  Assume that $C$ is a conic that is geometrically irreducible.
  Then, $\Bir_\k(\p^2/\pi)\simeq \{f\in\Aut_{\kk(\PPP^1)}(\PPP^2)\mid f(C)=C\}$.
\end{lemma}

\begin{proof}
	We first recall the proof of $\Bir_\k(\p^2/\pi)\simeq \Bir_{\k(\p^1)}(C)$:
  We write $K=\kk(\PPP^1)$ and $L=\kk(\PPP^2)$ and consider the inclusion of fields $\kk\subset K\subset L$, where we identify $K$ with $\pi^*(K)\subset L$.
  For the first statement, we use the fact that for any variety $W$ over a field $\kk$ there is an (anti-)isomorphism $\Bir_\kk(W)\simeq \Aut_\kk(\kk(W))$ given by $f\mapsto f^*$.
  Applying this fact for $W=\PPP^2$ we obtain
  \begin{align*}
      \{f\in\Bir_\kk(\PPP^2)\mid \pi\comp f=\pi\} & \simeq \{f^*\in\Aut_\kk(L)\mid (\pi\comp f)^*=\pi^*\}.
  \end{align*}
  Since the condition $f^*\comp\pi^*=\pi^*$ is equivalent to say that $f^*|_{K}=\id_{K}$ (using the identification $K$ with $\pi^*(K)$), the above set is exactly the set of automorphisms of the field $L$ that fix the subfield $K$, namely $\Aut_{K}(L)$.
  Now we use that $\kk(C)=L$ and the fact from the beginning of the proof on $W=C$ and find an (anti-)isomorphism $\Aut_K(L)\simeq\Bir_K(C)$.

  Since every rational map from a smooth projective curve to a projective variety is a morphism, it holds that $\Bir_{K}(C)=\Aut_{K}(C)$. Since $C$ is a conic, the statement follows.
\end{proof}

In other words, writing $\pi\colon[s:t]\mapsto [F_0:F_1]$, where $F_0,F_1\in\k[s,t]$ are homogeneous of the same degree $d\leq 2$, there is a natural bijection
\[\Bir_\k(\p^2/\pi)=\{\phi\in \Bir_\k(\p^2) \mid \pi\circ\varphi=\pi\} \leftrightarrow \{A\in\PGL_3(\k(t)) \mid \text{$A$ fixes $tF_0-F_1=0$}\}.\]

\bigskip
For any field $\k$, the group $\JJ_1:=\Bir_{\k}(\P^2,\pi_1)$ of de Jonqui\`eres transformations consists of the maps that preserve the fibration $\pi_1\from [x:y:z]\dashto [y:z]$,
that is, they permute the lines through $[1:0:0]$. In this case, the generic fiber is isomorphic to $\p^1_{\k(t)}$.

\begin{lemma}\label{lemma:GeneratorsOfJ1}
	For an arbitrary field $\k$, the following holds:
  \begin{enumerate}
    \item The group $\JJ_1$ is isomorphic to the semi-direct product $\PGL_2(\k(t))\rtimes\PGL_2(\k)$, and via $(x,y)\mapsto[x:y:1]$, these are maps of the form
    \[
        \left(x,y\right)\mapsto \left(\frac{\alpha x+\beta}{\gamma x +\delta},\frac{ay+b}{cy+d}\right)
    \]
    with $\alpha,\beta,\gamma,\delta\in\k[y]$ satisfying $\alpha\delta-\beta\gamma\neq0$ and $a,b,c,d\in\k$ satisfying $ad-bc\neq 0$.
    \item $\JJ_1$ is contained in the group generated by $\Aut_{\k}(\p^2)\simeq \PGL_3(\k)$ and maps of the form
    \[
      g_p\from [x:y:z]\dashto[x:yp(t):zp(t)],
    \]
    where $p\in\k(t)^*$ is a rational function in $t={y}/{z}$.
  \end{enumerate}
\end{lemma}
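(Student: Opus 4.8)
The plan is to prove the two parts in turn, treating part (1) as a structural warm-up and concentrating the effort on the single genuine difficulty in part (2).

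For part (1) I would work in the affine chart $z=1$ with coordinates $(x,y)$, in which $\pi_1$ is the projection $(x,y)\mapsto y$. A map $f\in J_1$ descends along $\pi_1$ to an automorphism $\alpha\in\Aut_\kk(\PPP^1)=\PGL_2(\kk)$ of the base, which produces the second coordinate $\frac{ay+b}{cy+d}$; restricting $f$ to the generic fibre, a copy of $\PPP^1$ over the function field $\kk(t)$ (with $t=y/z$, i.e. $t=y$ in this chart) with coordinate $x$, gives an element of $\PGL_2(\kk(t))$ and hence the first coordinate $\frac{\alpha x+\beta}{\gamma x+\delta}$ with $\alpha,\beta,\gamma,\delta\in\kk[y]$ and $\alpha\delta-\beta\gamma\neq0$; conversely every such pair is birational and preserves $\pi_1$. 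This identifies $\Ker(\Pi_1)$, the maps with trivial base action, with $\PGL_2(\kk(t))$, and exhibits the linear maps $[x:y:z]\dashto[x:ay+bz:cy+dz]$ as a section of $\Pi_1$ onto $\PGL_2(\kk)$. The exact sequence then splits and yields $J_1\cong\PGL_2(\kk(t))\rtimes\PGL_2(\kk)$.

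For part (2), the section just described already lies in $\Aut_\kk(\PPP^2)$, so it suffices to show $\Ker(\Pi_1)=\PGL_2(\kk(t))\subset\langle\Aut_\kk(\PPP^2),\LLLL_1^*\rangle=:\Gamma$. Applying Lemma~\ref{lemma:GeneratorsOfPGL2} over the field $\kk(t)$, this group is generated by the three fibre-transformations $x\mapsto x/p$ (for $p\in\kk(t)^*$), $x\mapsto x+1$, and $x\mapsto 1/x$, corresponding to $\left(\begin{smallmatrix}1&0\\0&p\end{smallmatrix}\right)$, $\left(\begin{smallmatrix}1&1\\0&1\end{smallmatrix}\right)$ and $\left(\begin{smallmatrix}0&1\\1&0\end{smallmatrix}\right)$. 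The first is exactly $g_p\in\LLLL_1^*$, and the second is the linear map $[x+z:y:z]\in\Aut_\kk(\PPP^2)$; both lie in $\Gamma$ immediately.

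The hard part is the fibre inversion $x\mapsto 1/x$, that is, the map $[z^2:xy:xz]$, which in the chart $z=1$ is $(x,y)\mapsto(1/x,y)$. This cannot be produced by elements of $\Gamma$ that remain inside $J_1$: every linear map in $J_1$ and every $g_p$ acts on the generic fibre by an \emph{affine} transformation $x\mapsto px+q$, and such maps generate only the Borel subgroup of $\PGL_2(\kk(t))$. The resolution is to leave $J_1$ and use monomial maps. Writing $\phi_M\from(x,y)\mapsto(x^ay^b,x^cy^d)$ for $M=\left(\begin{smallmatrix}a&b\\c&d\end{smallmatrix}\right)\in\GL_2(\ZZZ)$, the assignment $M\mapsto\phi_M$ is a homomorphism $\GL_2(\ZZZ)\to\Bir_\kk(\PPP^2)$ (one checks $\phi_M\comp\phi_N=\phi_{MN}$). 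Here the coordinate swap $\phi_{\left(\begin{smallmatrix}0&1\\1&0\end{smallmatrix}\right)}=[y:x:z]\in\Aut_\kk(\PPP^2)$ and $\phi_{\left(\begin{smallmatrix}1&1\\0&1\end{smallmatrix}\right)}\from(x,y)\mapsto(xy,y)$, which equals $g_{1/t}\in\LLLL_1^*$, both lie in $\Gamma$; since these two matrices generate $\GL_2(\ZZZ)$, every monomial map lies in $\Gamma$, in particular $\phi_{\left(\begin{smallmatrix}-1&0\\0&1\end{smallmatrix}\right)}\from(x,y)\mapsto(1/x,y)$, the desired inversion. Crucially the exponents live in $\ZZZ$, so this step is characteristic-free and works over an arbitrary field. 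Combining the three fibre-generators gives $\PGL_2(\kk(t))\subset\Gamma$, and with the section from part (1) this yields $J_1\subset\Gamma$, completing the proof.
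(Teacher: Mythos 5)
Your proof is correct and is essentially the paper's own argument: part (1) via the split exact sequence with the linear maps of $J_1$ as a section, and part (2) by applying Lemma~\ref{lemma:GeneratorsOfPGL2} over $\kk(t)$, observing that the scaling generators are exactly the maps $g_p$ and the translation generator is linear, so that the whole issue reduces to the fibre inversion $(x,y)\mapsto(1/x,y)$. Your $\GL_2(\ZZZ)$-monomial-map resolution of that inversion is a systematized version of the paper's one-line identity $f=\beta\comp g_y\comp\alpha$ with $\alpha\from[x:y:z]\mapsto[y:z:x]$ and $\beta\from[x:y:z]\mapsto[y:x:z]$: in the chart $z=1$ all three factors are themselves monomial maps, and the corresponding product in $\GL_2(\ZZZ)$ is precisely your matrix $\left(\begin{smallmatrix}-1&0\\0&1\end{smallmatrix}\right)$.
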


\begin{proof}
  The exact sequence $\JJ_1=\Bir_\k(\p^2,\pi_1)\to\PGL_2(\k)$ from above splits:
  The image of the automorphisms $[x:y:z]\mapsto[x:ay+bz:cy+dz]$ in $\PGL_2(\k)$ is $\left(\begin{smallmatrix}a & b\\ c&d\end{smallmatrix}\right)$, so the image is $\PGL_2(\k)$ and the automorphisms of $\p^2$ surject onto the image.
  Moreover, $\Bir_\k(\p^2/\pi_1)$ is isomorphic to $\PGL_2(\k(t))$.

  So it remains to find generators of the kernel.
  By Lemma~\ref{lemma:GeneratorsOfPGL2}, $\PGL_2(\k(t))$ is generated by the following matrices in $\PGL_2(\k(t))$, corresponding to birational maps on $\AAA^2$, corresponding to birational maps on $\p^2$:
  \begin{align*}
      \left({\begin{smallmatrix}0 & 1\\ 1&0\end{smallmatrix}}\right) \from  (x,y)&\dashto(\frac{1}{x},y), &
      f\from [x:y:z]&\dashto [z^2:xy:xz],\\
      \left({\begin{smallmatrix}1 & 1\\ 0&1\end{smallmatrix}}\right)\from  (x,y)&\dashto(x+1,y), &
      f'\from[x:y:z]&\dashto [x+z:y:z]\in\Aut_{\k}(\p^2),\\
      \left({\begin{smallmatrix}1 & 0\\ 0&q\end{smallmatrix}}\right)\from  (x,y)&\dashto(\frac{x}{q(y)},y), &
      g_p\from[x:y:z]&\dashto [xz^d:yp(y,z):zp(y,z)]=[x:yq(t):zq(t)],
  \end{align*}
  where $p\in\k[y,z]_d$ is the homogenization of the non-zero polynomial $q\in\k[t]$ and $d$ is the degree of $q$, and $t=\frac{y}{z}$.
  Note that $f\notin \JJ_1$ but $f',g_p\in \JJ_1$.
  Note that since we include $\Aut_\k(\p^2)$, $f$ is redundant as $f = \beta\comp g_y\comp\alpha$ with the automorphisms $\alpha\from[x:y:z]\mapsto[y:z:x]$ and $\beta\from[x:y:z]\mapsto[y:x:z]$.
\end{proof}

\section{Groups preserving a pencil of conics}\label{section:InfiniteFamilies}

Let $\k$ be a perfect field.
We say that a del Pezzo surface $X$ of degree $5$ is of \emph{type $4$}, respectively of \emph{type $2+2$} if $\rho\colon X\to\p^2$ is the blow-up of a point of degree $4$, respectively the blow-up of two points of degree $2$. In both cases, this gives four geometric points $\PPPP=\{p_1,p_2,p_3,p_4\}$, and the pencil of conics through $\PPPP$ induces a morphism $\pi\colon X\to\p^1$ (unique up to postcomposition with elements of $\Aut_\k(\p^1)$):
\begin{center}
  \begin{tikzpicture}
    \matrix (m) [matrix of math nodes,row sep=1em,column sep=1em,minimum width=2em]
    {
        X & &\PPP^1.  \\
        \PPP^2 & & \\};
        \path[-stealth]
        (m-1-1) edge[] node[above] {$\pi$} (m-1-3)
        (m-1-1) edge[] node[left] {$\rho$} (m-2-1)
        ;
      \end{tikzpicture}
\end{center}
In both cases, $\pi\colon X\to\p^1$ has three singular fibers.
If $X$ is of type $4$ then $\pi\colon X\to\p^1$ is a Mori conic bundle and $X\in\Cl_5$. If $X$ is of type $2+2$, then there is one singular fiber such that both irreducible components are defined over $\k$, giving two contractions $\eta\colon X\to X_6$, $\eta'\colon X\to X_6'$ over $\p^1$ with $X_6,X_6'\in\Cl_6$.

Figures~\ref{figure:X4singularFibers} and~\ref{figure:X2singularFibers} illustrate $X$ of type $4$, respectively $2+2$ in the case of $\k=\F_2$. (See also \cite[Lemma~6.4 and Figure~6.1]{Schneider-relations}.)

\begin{figure}[!htbp]
  \caption{$X$ of type $4$. The Galois action on the singular fibers of $X$ and the seven $\FFF_2$-points}
  \label{figure:X4singularFibers}
  \begin{center}
    \begin{tikzpicture}[scale=0.5] 
      \node at (8.5,0) {$\XX_4$};
      \foreach \i in {0,2,4}
        {
        \draw[name path=L12] (-2+\i,2) -- (-3+\i,-0.5);
        \draw[name path=L34] (-3+\i,0.5) -- (-2+\i,-2);
        \ifthenelse {\i=4}
          {
            \path[name intersections={of=L12 and L34}];
            \node [fill=black,inner sep=2pt] at (intersection-1) {};
          }
          {}
        }
      \draw[name path=fiber1] (4.5,2) -- (4.5,-2); 
      \draw[name path=fiber2] (7,2) -- (7,-2); 
      \foreach \i in {1,0,-1}
        {
          \draw[name path=inv1, draw=none] (-2,\i) -- (7,\i);
          \path[name intersections={of=inv1 and fiber1}];
          \node [fill=black,inner sep=2pt,label=180:{}] at (intersection-1) {};
          \path[name intersections={of=inv1 and fiber2}];
          \node [fill=black,inner sep=2pt,label=180:{}] at (intersection-1) {};
        }
      \node[label=190:\tiny{$L_{12}$}] (L12) at (-2,2) {};
      \node[label=170:\tiny{$L_{34}$}] at (-2,-2) {};
      \node[label=190:\tiny{$L_{14}$}] (L14) at (0,2) {};
      \node[label=170:\tiny{$L_{23}$}] at (0,-2) {};
      \draw [{Latex[length=1.5mm]}-{Latex[length=1.5mm]}] (L12.north east) to [bend left] node [above]{} (L14.north west) ;
      \draw [{Latex[length=1.5mm]}-{Latex[length=1.5mm]}] (1.8,1) to [bend left] (1.8,-1) ;
      \node [] at (8.5,-2.5) {$\downarrow$};
      \node [] at (8.5,-4) {$\PPP^1$};
      \draw[name path=P1 ] (-4,-4) -- (7.5,-4);
      \node [fill=black,inner sep=2pt] at (1.25,-4) {};
      \node [fill=black,inner sep=2pt] at (4.5,-4) {};
      \node [fill=black,inner sep=2pt] at (7,-4) {};

    \end{tikzpicture}%
  \end{center}
\end{figure}

\begin{figure}[!htbp]
  \caption{$X$ of type $2+2$. The Galois action on the singular fibers of $\XX_2$ over $\k=\F_2$ and the seven $\FFF_2$-points}
  \label{figure:X2singularFibers}
  \begin{center}
    \begin{tikzpicture}[scale=0.5] 
      \node at (-5,0) {$X$};
      \foreach \i in {0,2,4}
        {
        \draw[name path=L12] (-2+\i,2) -- (-3+\i,-0.5);
        \draw[name path=L34] (-3+\i,0.5) -- (-2+\i,-2);
        \path[name intersections={of=L12 and L34}];
        \node [fill=black,inner sep=2pt,label=180:{}] at (intersection-1) {};
        \ifthenelse{\i=4}
          {
          \foreach \j in {0.7,1.5}
            {
            \draw[name path=inv1, draw=none] (0,\j) -- (6,\j);
            \path[name intersections={of=inv1 and L12}];
            \node [fill=black,inner sep=2pt,label=180:{}] at (intersection-1) {};
            \draw[name path=inv2, draw=none] (0,-\j) -- (6,-\j);
            \path[name intersections={of=inv2 and L34}];
            \node [fill=black,inner sep=2pt,label=180:{}] at (intersection-1) {};
            }
          }
          {
          \draw [{Latex[length=1.5mm]}-{Latex[length=1.5mm]}] (-2.2+\i,1) to [bend left] (-2.2+\i,-1) ;
          }

        }
        \node[label=190:\tiny{$L_{13}$}] (L12) at (-2,2) {};
        \node[label=170:\tiny{$L_{24}$}] at (-2,-2) {};
        \node[label=190:\tiny{$L_{14}$}] (L14) at (0,2) {};
        \node[label=170:\tiny{$L_{23}$}] at (0,-2) {};

      \node [] at (-5,-2.5) {$\downarrow$};
      \node [] at (-5,-4) {$\PPP^1$};
      \draw[name path=P1, ] (-4,-4) -- (4,-4);
      \node [fill=black,inner sep=2pt] at (-2.75,-4) {}; 
      \node [fill=black,inner sep=2pt] at (-0.75,-4) {};
      \node [fill=black,inner sep=2pt] at (1.25,-4) {};
    \end{tikzpicture}
  \end{center}
\end{figure}

\begin{lemma}\label{lemma:DoubleSectionNumbers}
  Let $\k$ be a perfect field and let $X$ be of type $4$ or $2+2$, coming from the blow-up $\rho\colon X\to\p^2$ of four points $\{p_1,\ldots,p_4\}\subset\p^2(\bar\k)$.
  Let $C\subset X$ be a curve. Write $d$ for the degree of $\Gamma=\rho(C)\subset\PPP^2$, and $m$ for the multiplicity of $\Gamma$ in $p_1,p_2$, and $m'$ for the multiplicity in $p_3,p_4$. Assume that $m'\geq m$.
  Then the following hold for a fiber $f$ in $X_{\bar\k}$:
   \begin{enumerate}
    \item\label{item:DoubleSectionNumbers--even} $C_{\bar\k}\cdot f$ is even.
    \item\label{item:DoubleSectionNumbers--double} $C_{\bar\k}\cdot f=2$ if and only if $d=m+m'+1$.
    Moreover, either $m'=m$ (and $d$ is odd) or $m'=m+1$ (and $d$ is even).
    \item\label{item:DoubleSectionNumbers--rational} If $C$ is rational and \ref{item:DoubleSectionNumbers--double} holds, then $C_{\bar\k}$ has exactly $m$ singular points.
  \end{enumerate}
  In particular, if $\k$ is a field of characteristic $2$ and $C$ is a double section, then $C_{\bar\k}$ has exactly $m$ singular points, and $d=m+m'+1$.
\end{lemma}

\begin{proof}
  We work over the algebraic closure $\bar\k$.
  Let $q_1,\ldots,q_r$ be the singular points of $C$ (including infinitely near points).
  After resolving the singularities, we can write \[C=dL-m(E_1+E_2)-m'(E_3+E_4)-\sum_{j=1}^rm_{q_j}E_{q_j},\]
  where $m_{q_j}$ denote the multiplicity of $C$ at $q_j$, and $E_{q_j}$ is the exceptional divisor of $q_j$ and $E_1,\ldots,E_4$ are the exceptional divisors of the four points of $\PPPP$, and $L$ is the pull-back of a general line in $\PPP^2$.
  We compute
  \begin{align*}
      C\cdot f &=\left(dL-m(E_1+E_2)-m'(E_3+E_4)\right)\cdot\left(2L-(E_1+\cdots+E_4)\right)\\
      &= 2d-2m-2m',
  \end{align*}
  hence $C\cdot f$ is even (providing \ref{item:DoubleSectionNumbers--even}), and $C\cdot f=2$ if and only if $d=m+m'+1$.
  Moreover, if $X$ is of type $4$ then $m=m'$.
  If $X$ is of type $2+2$, we write $f_1$ and $f_2$ for the irreducible components of the singular fiber that has both irreducible components defined over $\k$.
  We can write $f_1=L-E_1-E_2$ and $f_2=L-E_3-E_4$, so $C\cdot f_1=d-2m$ and $C\cdot f_2=d-2m'$.
  So if $2=C\cdot f=C\cdot f_1+C\cdot f_2$, then $(d-2m,d-2m')\in\{(0,2),(1,1),(2,0)\}$, implying that $m=m'+1$, $m=m'$, or $m'=m+1$.
  With the assumption $m'\geq m$, the first possibility does not occur.
  This yields \ref{item:DoubleSectionNumbers--double}.

  Note that $C_{\bar\k}\cdot f=2$ implies that $m_{q_j}=2$ for $j=1,\ldots,r$.
  The rationality of the curve inserted into the adjunction formula gives
  \begin{align*}
      0=g=\frac{1}{2}(d-1)(d-2) -2\frac{1}{2}m(m-1) -2\frac{1}{2}m'(m'-1)-r,
  \end{align*}
  so after replacing $m'=m$ and $d=2m+1$ (respectively $m'=m+1$ and $d=2m'$), we get $r=m=m'$ (respectively $r=m'-1=m$). This gives \ref{item:DoubleSectionNumbers--rational}.

  Finally, if $C$ is a double section then $C\cdot f=2$ and $C$ is rational, finishing the proof.
\end{proof}

\begin{remark}\label{remark:NoSections}
  Let $\k$ be a perfect field, and $X$ of type $4$ or $2+2$. The above lemma implies that there is no curve $C$ on $X$ with $C\cdot f=1$. Hence, $\rho\colon X\to \p^2$ has no sections.
\end{remark}

\begin{remark}\label{rem:DoubleSectionSingularPoints--AnyChar}
  Let $\k$ be a perfect field.
  Let $X$ be of type $4$ or $2+2$, let $f$ be a $\bar\k$-fiber on $X$. Let $C$ be a curve on $X$ with $C\cdot f=2$ and assume that it intersects $f$ in a unique point $p$.
  Having $C\cdot f=2$ implies that either $p$ lies on exactly one irreducible component of $f$ (and $p$ is either a smooth point of $C$ or singular with multiplicity $2$), or $p$ is the singular point of $f$ and a smooth point of $C$.
\end{remark}

\begin{notation}\label{notation:birationalmapcenter}
  Let $\k$ be a perfect field.
  Let $X$ be of type $4$ or of type $2+2$.
  Let $\Omega\subset X(\overline{\k})$ be a Galois orbit of points such that every fiber contains at most one point of the orbit.
  Depending on the position of the orbit $\Omega$, we define now a birational map $\varepsilon_\Omega\from X\dashto X$:
  \begin{enumerate}
    \item\label{item:birationalmapcenter--smooth}
    If the points of $\Omega$ lie on smooth fibers of $X$, we define $\varepsilon_\Omega$ to be the birational map that is given by the blow-up of $\Omega$, followed by the contraction of the strict transforms of the fiber through the points of the orbit.
    \item\label{item:birationalmapcenter--singular}
    If $\Omega$ lies on exactly one irreducible component of a singular fiber of $X$, we define $\varepsilon_{\Omega}$ to be the birational map that is given by the blow-up of $\Omega$, followed by the contraction of the strict transform of the other irreducible component of the same fiber.
  \end{enumerate}
  We obtain a birational map $\varepsilon_\Omega\colon X\dashto X'$ that preserves the fibration $X'\to\PPP^1$ induced by $\pi\from X\to\PPP^1$.
  Writing $\rho\colon X\to\p^2$ (respectively $\rho'\colon X'\to\p^2$) for a contraction of the four $(-1)$-sections on $X$ (respectively $X'$), there exists $\alpha\in\Aut_{\k}(\PPP^2)$ such that $\rho^{-1}\comp\alpha\comp\rho'$ is an isomorphism from $X'$ to $X$ \cite[Proposition 6.12]{Schneider-relations}.
  Composing $\varepsilon_\Omega$ with this isomorphism, we can assume that the source and target of $\varepsilon_\Omega$ are equal.
	We call the obtained $\varepsilon_{\Omega}\from X\dashto X$ an \emph{elementary transformation of $X$ centered at $\Omega$}.
  (If the position of $\Omega$ is not in one of the above cases, we do not define $\varepsilon_\Omega$.)
\end{notation}

Note that if $X$ is of type $4$, then elementary transformations are exactly the Sarkisov links $X\link{}{}X$, and only case~\ref{item:birationalmapcenter--smooth} occurs.

\bigskip

\begin{lemma}\label{lem:ElementaryTransformationOneBp}
  Let $\k$ be a perfect field.
  Let $X$ be of type $4$ or $2+2$, with $\rho\colon X\to\p^2$ the blow-up of four points $\PPPP$.
  \begin{enumerate}
    \item\label{it:ElementaryTransfOneBp--Aut} If $\varphi\in\Aut_\k(X)$, then $\rho\circ\varphi\circ\rho^{-1}\in \Aut_\k(\p^2)$.
    \item\label{it:ElementaryTransfOneBp--one} If $p\in X$ is a $\k$-rational point on $X$ such that an elementary transformation $\varepsilon_p$ centered at $p$ is defined, then $\rho\circ\varepsilon_p\circ\rho^{-1}\in \langle \Aut_\k(\p^2), J_1\rangle$.
  \end{enumerate}
\end{lemma}

\begin{proof}
  A crucial fact is that there are exactly four $(-1)$-sections on $X$, corresponding to the exceptional divisors of the blow-up $\rho\colon X\to\p^2$ (see for example \cite[Lemma 6.5]{Schneider-relations}).
  In~\ref{it:ElementaryTransfOneBp--Aut}, the $(-1)$-sections have to be permuted by $\varphi$, and so $\rho\comp\varphi\comp\rho^{-1}$ has no base points and is therefore an automorphism of $\p^2$.

  For \ref{it:ElementaryTransfOneBp--one}, let $p$ be a rational point.
  If $X$ is of type $4$, the relation $(\p^2,1,4)$ in \cite[Section A.2.4]{ZimmermannHab} (see also \cite[Figure~B.4]{LamySchneider}) gives a decomposition of $\rho\circ\varepsilon_p\circ\rho^{-1}$ as $\p^2\linkI1\F_1\link44\F_1\linkIII1\p^2 \in \langle\Aut_\k(\p^2), J_1\rangle$. One can check that this is a cubic map.

  If $X$ is of type $2+2$, there are two cases: If $p$ lies on a smooth fiber, then similar as above there is a decomposition of $\rho\circ\varepsilon_p\circ\rho^{-1}$ as $\p^2\linkI1\F_1\link22\F_1\link22\F_1\linkIII1\p^2 \in \langle\Aut_\k(\p^2), J_1\rangle$. Again, this is a cubic map.

  If $p$ lies on a singular fiber, write $F_{12}, F_{34}$ for the strict transforms in $X$ of the lines through the Galois orbit $\{p_1,p_2\}$, respectively $\{p_3,p_4\}$, and write $E_1,\ldots,E_4$ for  the exceptional divisors of $\rho$. So we can assume that $p$ lies on $F_{34}$ but not on $F_{12}$. As the $(-1)$-sections $E_3$ and $E_4$ do not pass $F_{12}$, the birational map $\varepsilon_p$ is an isomorphism on them. Moreover, since $p\notin F_{12}$, the three points $\rho(p)$, $p_1$, and $p_2$ are not collinear. The other two $(-1)$-sections in the image of $\varphi$ are the strict transforms of the two lines $L_1$ and $L_2$ through $\rho(p)$ and $p_1$, respectively $p_2$. Therefore, we can write $\rho\circ\varepsilon_p\circ\rho^{-1}$ as $\p^2\linkI1\F_1\link22\F_1\to\p^2\in\langle\Aut_\k(\p^2),J_1\rangle$, which is a quadratic map.
\end{proof}

\begin{lemma}\label{lemma:IntersectionOfSingularFiberNotBasePoint}
  Let $\kk$ be a perfect field and $X$, $X'$ be two surfaces defined over $\kk$.
  Let $\pi\from X\to\PPP^1$ and $\pi'\from X'\to\PPP^1$ be two surjective morphisms with connected fibers and let $\varphi\from X\dashto X'$ be a birational map preserving the fibrations, that is there is an automorphism $\alpha\in\Aut_\kk(\PPP^1)$ such that $\pi'\comp\varphi=\alpha\comp \pi$.
  Assume that each fiber of $\pi'$ contains at most two components.
  Then, for any singular fiber $F=\pi^{-1}(p)$ consisting of two $(-1)$-curves $F_1$ and $F_2$ (over $\bar\kk$), the intersection point $q=F_1\cap F_2$ is not a base point of $\varphi$.
\end{lemma}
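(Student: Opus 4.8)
The plan is to argue by contradiction using a minimal resolution of $\varphi$, exploiting the fact that blowing up the node of a singular fibre produces a fibre component of multiplicity $2$, which is incompatible with the fibres of $\pi'$ being reduced. Everything may be checked over $\overline{\kk}$, since base points form a Galois-stable set and $q$ is a single geometric point. First I set up a common resolution: let $\eta\from Z\to X$ and $\eta'\from Z\to X'$ be a minimal resolution of $\varphi$, so that $\eta,\eta'$ are compositions of blow-ups of (possibly infinitely near) points, $\varphi=\eta'\comp\eta^{-1}$, and no $(-1)$-curve of $Z$ is contracted by both $\eta$ and $\eta'$. From $\pi'\comp\varphi=\alpha\comp\pi$ I get $\pi'\comp\eta'=\alpha\comp\pi\comp\eta$, and since $\alpha$ is an automorphism of $\PPP^1$ the fibre divisor $G=(\pi\comp\eta)^*(p)=\eta^*F$ coincides with $(\eta')^*F'$, where $F'=\pi'^{-1}(\alpha(p))$. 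Thus $G$ is simultaneously the total transform of $F$ under $\eta$ and the total transform of $F'$ under $\eta'$.

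Next I record the effect of blowing up the node. Suppose, for contradiction, that $q$ is a base point of $\varphi$. Then the minimal resolution $\eta$ blows up $q$; let $E$ be the corresponding exceptional curve and $\hat E$ its strict transform on $Z$. Since $q=F_1\cap F_2$ is a node of $F=F_1+F_2$, one has $\mathrm{mult}_q F=2$, so $E$ occurs with coefficient $2$ in the total transform of $F$ after the first blow-up. As later blow-ups never change the coefficient of the strict transform of an already existing component, $\hat E$ occurs with coefficient exactly $2$ in $G=\eta^*F$.

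I then establish that $F'$ is reduced. Because $F=F_1+F_2$ with $F_1,F_2$ smooth rational curves meeting transversally, $p_a(F)=0$; as $\pi$ is flat over the smooth curve $\PPP^1$, every fibre of $\pi$, and likewise every fibre of the birationally equivalent fibration $\pi'$, has arithmetic genus $0$. A fibre $F'$ with $p_a(F')=0$, $(F')^2=0$ and at most two components must be reduced: by adjunction a component of multiplicity $\geq 2$ would force a curve $C$ with $C^2=0$ and $C\cdot K_{X'}=-1$, hence the non-integral value $p_a(C)=\tfrac12$, which is impossible. So $F'$ is either irreducible or a union of two $(-1)$-curves, and in all cases reduced.

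Finally I derive the contradiction. Since $\hat E$ is contracted by $\eta$ to the point $q$, minimality of the resolution forbids $\hat E$ from also being contracted by $\eta'$; hence $\eta'$ maps $\hat E$ onto a component $C'$ of $F'$. Comparing the two descriptions of $G$, the coefficient of $\hat E$ in $G=(\eta')^*F'$ equals the coefficient of $C'$ in $F'$, which by the count above is $2$. Thus $F'$ has a component of multiplicity $2$, contradicting its reducedness; therefore $q$ is not a base point of $\varphi$. The main obstacle is precisely the bookkeeping of the last two paragraphs: one must make sure that the multiplicity $2$ created at the node survives all the way to $Z$ and is attached to a curve that genuinely dominates a fibre component of $X'$ — this is exactly where the minimality of the resolution (ruling out $\hat E$ being $\eta'$-exceptional) together with the reducedness of $F'$ enters, and both hypotheses are indispensable.
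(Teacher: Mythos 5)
Your setup is sound — the identification $G=\eta^*F=(\eta')^*F'$, the coefficient $2$ of the exceptional curve over the node, and the strategy of playing multiplicity against reducedness — but the last step has a genuine gap. You claim that minimality of the resolution forbids $\hat E$ from being contracted by $\eta'$. Minimality only rules out a \emph{$(-1)$-curve} being contracted by both $\eta$ and $\eta'$; it says nothing about curves of self-intersection $\le -2$, and such curves contracted on both sides really do occur in minimal resolutions of fibration-preserving maps (e.g.\ the minimal resolution of the de Jonqui\`eres map $(x,y)\dashto(x,y+x^2)$ of $\PPP^1\times\PPP^1$ contracts a chain of three $(-2)$-curves on both sides). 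Now $\hat E$ is a $(-1)$-curve in $Z$ only when $\varphi$ has no infinitely near base points above $q$; if it does, $\hat E$ has self-intersection $\le -2$ and the appeal to minimality is vacuous. Worse, in that case your own bookkeeping shows the argument cannot be rescued as written: since $F'$ is reduced, the components of $(\eta')^*F'$ that are \emph{not} $\eta'$-exceptional are exactly the strict transforms of the components of $F'$ and carry coefficient $1$; as $\hat E$ has coefficient $2$, it is then \emph{forced} to be $\eta'$-exceptional, and no contradiction appears.

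The gap is repairable with one more idea: every $\eta$-exceptional curve lying over $q$, not just the first one, has coefficient $\ge 2$ in $G$ (blowing up a point on a component of coefficient $\ge 2$ creates a new exceptional curve of coefficient $\ge 2$, so the property propagates down the whole tower over $q$), and among the curves over $q$ there is always at least one genuine $(-1)$-curve of $Z$, namely the exceptional curve of the last point blown up there. Running your coefficient-versus-reducedness argument on \emph{that} curve produces a $(-1)$-curve contracted by both $\eta$ and $\eta'$, which does contradict minimality. Two smaller points: your proof that $F'$ is reduced only covers the case $F'$ irreducible (only then does $(F')^2=0$ give $C^2=0$ for a multiple component $C$); if $F'$ has two components, a multiple component has negative self-intersection and you need Zariski's lemma together with adjunction to exclude it. Finally, note that the paper argues in the opposite direction and thereby sidesteps your difficulty: it takes a $(-1)$-curve $C$ contracted by $\eta'$, uses minimality to conclude $C$ is not $\eta$-exceptional, so $\eta(C)$ is a component $F_i$ of the singular fiber, and then $C^2=\eta(C)^2=-1$ forces $\eta$ to blow up no point of $F_i$ at all — in particular not $q$.
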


\begin{proof}
  Let $\eta\from Z \to X$ and $\eta'\from Z \to X'$ be the minimal resolution of $\varphi$.
  Hence, $\eta$ and $\eta'$ are blow-ups of Galois orbits.
  Note that $\eta'$ contracts only curves lying in a fiber of $\pi\comp\eta$ since $\varphi$ preserves the fibrations.
  Let $C\subset Z$ be a $(-1)$-curve that is contracted by $\eta'$ (and thus is not contracted by $\eta$ by the minimality of the resolution).
  As $\eta(C)$ is contained in a fiber, there are two possibilities:
  Either $\eta(C)$ is a smooth fiber, or it is an irreducible component of a singular fiber, say $F$.
  In the second case, $\eta(C)$ is a $(-1)$-curve.
  As $\eta\from Z \to X$ is a sequence of blow-ups and $\eta(C)$ as well as $C$ have self-intersection $-1$, the morphism $\eta$ does not blow-up any points on $\eta(C)$.

  Therefore, for each irreducible component of a singular fiber, the intersection point of the two irreducible components is not a base point of $\eta$ and therefore it is not a base point of $\varphi$.
\end{proof}

\begin{lemma}\label{lem:ResolveAk}
  Let $\k$ be a perfect field. Let $X$ be of type $4$ or $2+2$, and let $C$ be a curve on $X$ such that $C\cdot f=2$.
  Let $\QQQQ=\{q_1,\ldots,q_d\}$ be a Galois-orbit of size $d$ on $X$, along which $C$ is singular.
  Then there exists a sequence $\varphi=\varphi_n\circ\cdots\circ\varphi_1$ of elementary transformations on $X$ centered at (Galois-orbits infinitely near to) $\QQQQ$ that resolves the singularity of $C$ at $\QQQQ$.
\end{lemma}
\begin{proof}
  Note that $m_{q_i}(C)=2$ since $C\cdot f=2$, and so there is a resolution of the singularity of $C$ at $\QQQQ$ into blow-ups with a resolution of type $A_k$. It is enough to remark that this resolution can be achieved with elementary transformations.
  Indeed, let $f_i$ denote the $\bar\k$-fiber going through $q_i$. Since $q_i$ is a singularity of $C$ and $C\cdot f_i=2$, $q_i$ is the unique intersection point of $C$ and $f_i$. As in Remark~\ref{rem:DoubleSectionSingularPoints--AnyChar}, $q_i$ lies on exactly one irreducible component of $f_i$. In particular, the elementary transformation of $X$ centered at $\QQQQ$ is defined. Blowing up $\QQQQ$, one observes that the curve contracted by $\varepsilon_\QQQQ$ does not intersect the strict transform of $C$. Hence, each blow-up in the resolution of the singularity can be achieved with an elementary transformation centered at $\QQQQ$, respectively at points infinitely near to $\QQQQ$.
\end{proof}

\subsection{Fixing vs. preserving a conic fibration}\label{sec:FixVsPreservingFibrationAndProofOfProp}

\begin{lemma}\label{lem:SingularFibersOntoSingularFibers}
  Let $\k$ be a perfect field.
  Let $\rho\colon X\to\p^2$ be the blow-up of $\p^2$ at four points $\PPPP$ such that $X$ is of type $4$ or $2+2$, with induced fibration $\pi\colon\p^2\dashrightarrow\p^1$.
  Let $\varphi\in \Bir_\k(\p^2,\pi)$. Then $\widehat\varphi = \rho^{-1}\circ\varphi\circ\rho\colon X\dashrightarrow X$ preserves the fibration of $X$ over $\p^1$.
  Moreover, if $X$ is of type $4$, then $\widehat\varphi$ sends the set of the three singular fibers onto itself.
  If $X$ is of type $2+2$, then $\widehat\varphi$ sends the set of two singular fibers with no geometric component defined over $\k$ onto itself.
\end{lemma}

\begin{proof}
  This follows from \cite[Lemma~6.10]{Schneider-relations}.
\end{proof}

\begin{proposition}\label{prop:FixesFibrationUpToAuto}
  Let $\k$ be a perfect field.
  Let $X\to \p^1$ be a rational Mori conic bundle, and $\varphi\colon X\dashrightarrow X$ a birational map preserving the fibration. Then there exists $\alpha\in\Aut_\k(X)$ such that $\alpha\circ\varphi$ fixes the fibration.
\end{proposition}

\begin{proof}
  If $X\to \p^1$ is a Hirzebruch surface it follows from Lemma~\ref{lemma:GeneratorsOfJ1}.

  If $X\in\Cl_5$ then $\Aut_\k(\p^2)\cap \Bir_\k(\p^2,\pi)$ surjects onto $\Aut_\k^\pi(\p^1)$ \cite[Lemma~3.20]{LamySchneider}.

  If $X\in\Cl_6$, then by \cite[Lemma~4.15]{Schneider-Zimmermann}, there is an exact sequence
  \[1\to \Aut_\k(X_6/\pi) \to \Aut_\k(X_6,\pi)\to D_\k\rtimes\Z/2\to 1,\]
  where $D_\k\rtimes \Z/2\Z\subset\Aut_\k(\p^1)$ is the induced action on the fibers.
  Then, by \cite[Proposition~3.25]{LamySchneider} we have in fact
  \[1\to \Bir_\k(X_6/\pi) \to \Bir_\k(X_6,\pi)\to D_\k\rtimes\Z/2\to 1.\]
\end{proof}

\begin{lemma}\label{lem:DecompositionElementaryTransformations}
  Let $\k$ be a perfect field and let $X$ be of type $4$ or $2+2$. Let $\varphi\colon X\dashrightarrow X$ be a birational map preserving the fibration.
  \begin{enumerate}
    \item\label{it:DecompositionElementaryTransformations--4} If $X$ is of type $4$, then $\varphi$ has a decomposition into elementary transformations and elments in $\Aut_\k(X)$.
    \item\label{it:DecompositionElementaryTransformations--2} If $X$ is of type $2+2$, let $\eta_i\colon X\to X_i$ be the two contractions with $X_i\in\Cl_6$ for $i=1,2$.
    Then $\varphi$ has a decomposition into elementary transformations, elements in $\Aut_\k(X)$, and elements in $\eta_i^{-1}\circ\Aut_\k(X_i)\circ\eta_i$ for $i=1,2$.
  \end{enumerate}
\end{lemma}

\begin{proof}
  The birational map $\psi=\eta_1\varphi\circ\eta_1^{-1}\colon X_1\dashrightarrow X_1$ preserves the fibration of the Mori conic bundle $X_1/\p^1$, and so there exists a decomposition of $\psi$ into Sarkisov links of type II (and automorphisms) between Mori conic bundles by \cite[Corollary~3.2]{Schneider-relations}.
  Write $r_i\in X_i(\k)$ for the base point of $\eta_i^{-1}$ for $i=1,2$.
  Let $\chi\colon Y\dashrightarrow Y'$ be a Sarkisov link of type II in the decomposition of $\psi$.
  In fact, $Y,Y'\in \{X_1,X_2\}$ by \cite[Lemma~6.12]{Schneider-relations} (see also \cite[Lemma~A.11]{LamySchneider}). Write $Y=X_a$ and $Y'=X_b$ with $a,b\in\{1,2\}$.
   Up to composition with an element of $\Aut_\k(X_b)$, we can assume that $\chi$ fixes the fibrations by Lemma~\ref{prop:FixesFibrationUpToAuto}.
   If the base point of $\chi$ is not on the fiber through $r_a$, then $\eta_b^{-1}\circ\chi\circ\eta_a$ is an elementary transformation centered at a Galois orbit lying on smooth fibers.
   If $\Bas(\chi)=r_a$ and $\Bas(\chi^{-1})=r_b$, then $\hat\chi=\eta_b^{-1}\circ\chi\circ\eta_a$ is an automorphism of $X$.
   If $\Bas(\chi)$ lies on the fiber through $r_a$ but is not $r_a$, and $\Bas(\chi^{-1})=r_b$, then $\hat\chi$ is an elementary transformation centered at $\eta_a^{-1}(\Bas(\chi))$, which is a $\k$-rational point on a singular fiber.
   If $\Bas(\chi)$ and $\Bas(\chi^{-1})$ lie on the fiber through $r_a$ respectively $r_b$ but are not $r_a$ respectively $r_b$, then $\chi$ is the composition of two elementary transformations centered at a $\k$-rational point on a singular fiber.
\end{proof}

\subsection{Double sections in characteristic $2$}

\begin{definition}\label{definition:DoubleSection}
  Let $\k$ be a perfect field of characteristic $2$.
  Let $X$ be of type $4$ or $2+2$. We say that a morphism $\sigma\from\PPP^1\to X$ (or its image $C={\sigma(\p^1)}$) is a \emph{double section} if $\sigma\from\PPP^1\to\sigma(\PPP^1)$ is birational and $\pi\comp\sigma=\Frob_2$ where $\Frob_2$ is given by $[s:t]\mapsto[s^2:t^2]$.
  In particular, any double section $C\subset X$ is rational and given by the image of a morphism $\sigma\from\PPP^1\to X$ of the form
  \[
    \sigma([s:t])=([a:b:c],[s^2:t^2])\in\PPP^2\times\PPP^1
  \]
  for some homogeneous polynomials $a,b,c\in\k[s,t]$ of the same degree.
\end{definition}

\begin{lemma}\label{lemma:DoubleSectionIsDouble}
  Let $\k$ be a perfect field of characteristic $2$ and let $X$ be of type $4$ or $2+2$.
  Let $\sigma\from\PPP^1\to X$ be a double section, and let $f$ be a $\bar\k$-fiber in $X$. Then $C_{\bar\k}\cdot f=2$ and $\pi\colon X\to\p^1$ induces a bijection $C(\bar\k)\to\PPP^1(\bar\k)$, where $C=\sigma(\PPP^1)$.
\end{lemma}

\begin{proof}
  The diagram
  \begin{center}
    \begin{tikzpicture}
      \matrix (m) [matrix of math nodes,row sep=1em,column sep=2em,minimum width=2em]
      {
           & X_\bullet  \\
          \PPP^1 & \PPP^1 \\};
          \path[-stealth]
          (m-2-1) edge[] node[left] {$\sigma$} (m-1-2)
          (m-2-1) edge[] node[below] {$\Frob_2$} (m-2-2)
          (m-1-2) edge[] node[right] {$\pi$} (m-2-2)
          ;
    \end{tikzpicture}
  \end{center}
  commutes.
  Hence, $C$ is parametrized by $[s:t]\mapsto\left([a:b:c],[s^2:t^2]\right)\in\PPP^2\times\PPP^1$, where $a,b,c\in\k[s,t]$ are homogeneous polynomials of the same degree.
  Inserting the parametrization of $C$ into the equation of any $\bar\k$-fiber $f$, that is, $\mu s+\lambda t=0$ for some $[\lambda:\mu]\in\p^1(\bar\k)$, yields $0=\mu s^2+\lambda t^2=\mu'^2 s^2+\lambda'^2 t^2$ for some $\lambda',\mu'\in\bar\k$.
  Hence, $C\cdot f=2$.
  Since $a\mapsto a^2$ is a bijective map on $\bar\k$, $C$ intersects any $\bar\k$-fiber in a unique $\bar\k$-point, giving bijectivity of $\pi$ on $C(\bar\k)$.
\end{proof}

\begin{remark}\label{rem:DoubleSectionSingularPoints--Char2}
  If $C$ is a double section, then Remark~\ref{rem:DoubleSectionSingularPoints--AnyChar} implies that $C$ intersects every fiber in a unique point (see Lemma~\ref{lemma:DoubleSectionIsDouble}).
  In particular, if $X$ is of type $4$ then $C$ passes through the singular point of all three singular fibers, and if $X$ is of type $2+2$ then $C$ contains (at least) the singular points of the two singular fibers whose components are not defined over $\k$.
\end{remark}

\begin{lemma}\label{lemma:PushForwardOfDoubleIsDouble}
  Let $\k$ be a perfect field of characteristic $2$.
  Let $X$ be of type $4$ or $2+2$.
  Let $C\subset X$ be a double section.
  Let $\varphi\from X\dashto X$ be a birational map (defined over $\k$) preserving the fibration.
  Then, $\varphi_*(C)\subset X$ is a double section.
\end{lemma}

\begin{proof}
  As $\varphi$ preserves the fibration $\pi\colon X\to\p^1$, there exists $\alpha\in\Aut_\k(\p^1)$ such that $\pi\circ\varphi=\alpha\circ\pi$. In particular, the curve $C$ is not contracted by $\varphi$ and so the map $\varphi\from C\dashto\varphi_*(C)$ is birational.
  Let $\sigma\from \PPP^1\to X$ be the double section with $\sigma(\PPP^1)=C$.
  We let $\sigma'=\varphi\comp\sigma\from \PPP^1\dashto X$, which is a morphism since any rational map from $\PPP^1$ to a projective variety is a morphism.
  We obtain the commutative diagram
  \begin{center}
    \begin{tikzpicture}
      \matrix (m) [matrix of math nodes,row sep=1em,column sep=1em,minimum width=2em]
      {
          X && X  \\
          \PPP^1 && \PPP^1 \\
          \PPP^1&& \p^1. \\};
          \path[-stealth]
          (m-1-1) edge[dashed] node[above] {$\varphi$} (m-1-3)
          (m-1-1) edge node[right] {$\pi$} (m-2-1)
          (m-2-1) edge[] node[above] {$\alpha$} (m-2-3)
          (m-1-3) edge node[right] {$\pi$} (m-2-3)
          (m-3-1) edge[bend left=90] node[left] {$\sigma$} (m-1-1)
          (m-3-1) edge node[above] {$\alpha'$} (m-3-3)
          (m-3-3) edge node {} (m-2-3)
          (m-3-1) edge node[left] {\scalebox{0.7}{$\Frob_2$}} (m-2-1)
          (m-3-3) edge node[right] {\scalebox{0.7}{$\Frob_2$}} (m-2-3)
          ;
        \end{tikzpicture}
  \end{center}
  In particular, $\pi\comp\sigma'=\alpha\circ\Frob_2$.
Note that taking square roots in $\k$ is a bijection $\k\to\k$, and so there exists $\alpha'\in\Aut_\k(\p^2)$ such that $\alpha\circ\Frob_2=\Frob_2\circ\alpha'$. Hence, $\sigma''=\sigma'\circ\alpha'^{-1}$ satisfies $\pi\circ\sigma''=\Frob_2$, and hence $\sigma''$ is a double section with $\sigma''(\p^1)=C$.
\end{proof}

\begin{definition}\label{def:strangeLine}
	Let $\k$ be a perfect field of characteristic $2$. Let $X$ be of type $4$ or of type $2+2$ given by the blow-up $\rho\colon X\to\p^2$ at four geometric points $\PPPP$. Let $L\subset \p^2$ be the unique line that is tangent to every conic through $\PPPP$. We will call $L$, as well as its strict transform on $X$, the \emph{strange line} with respect to $\PPPP$.
\end{definition}

\begin{lemma}[Smooth double sections]\label{lemma:SmoothDoubleSection}
  Let $\k$ be a perfect field in characteristic $2$.
  Let $X$ be of type $4$ or $2+2$, given by the blow-up $\rho\colon X\to\p^2$ of four points $p_1,\ldots,p_4$.
  Let $C$ be a double section on $X$ (defined over $\k$).
  Let $\Gamma=\rho(C)\subset\PPP^2$, and let $d$ be the degree of $\Gamma$.
  \begin{enumerate}
    \item\label{it:SmoothDoubleSection--1} If $d=1$, then $\Gamma$ is the {strange line}.
    \item\label{it:SmoothDoubleSection--2} If $d=2$, then $\Gamma$ is a conic that passes through two of the four points and is tangent to the line through the other two points. In particular, this happens only if $X$ is of type $2+2$.
    \item\label{it:SmoothDoubleSection--3} If $d=3$, then $\Gamma$ is a singular cubic passing through the four points.
  \end{enumerate}
  In particular, if $C$ is smooth, then $d\in\{1,2\}$.
  Moreover, if $d\in\{2,3\}$, there exists an elementary transformation $\varepsilon_r\colon X\dashrightarrow X$ centered at a $\k$-rational point $r\in X(\k)$
  such that $\rho((\varepsilon_r)_*(C))$ is the strange line.
\end{lemma}

\begin{proof}
  Let $\sigma\from\PPP^1\to X$ be the morphism  corresponding to the double section $C$, which is given by $\sigma([s:t])=([a:b:c],[s^2:t^2])$ for some homogeneous polynomials $a,b,c\in\k[s,t]$ of the same degree.
  By Lemma~\ref{lemma:DoubleSectionNumbers}, we have $d=m+m'+1$.
  If $d=1$ then $\Gamma$ is a line, and since $\Gamma$ comes from a double section it is tangent to every conic through $p_1,\ldots,p_4$. Hence, $\Gamma$ is the strange line.

  If $d=2$, then $m=0$ and $m'=1$, so $\Gamma$ is an irreducible conic passing through the orbit $p_3,p_4$.
  Consider the singular fiber $f=L_{12}+L_{34}$. Since $C\cdot f=2$, $\Gamma$ is tangent to $L_{12}$.
  Moreover, the intersection point of $\Gamma$ and $L_{12}$ is $\k$-rational, whose preimage $r\in X(\k)$ lies on exactly one irreducible component of the singular fiber $f$. Hence, an elementary transformation $\varepsilon_r$ centered at $r$ is defined, and $C'=(\varepsilon_r)_*(C)$ is a double section (\ref{lemma:DoubleSectionIsDouble}) with self-intersection $1$. Hence, $\rho(C')$ is the strange line as in \ref{it:SmoothDoubleSection--1}.

  If $d=3$, then $m=m'=1$ (because $m$ and $m'$ differ by at most $1$, see Lemma~\ref{lemma:DoubleSectionNumbers}). So $\Gamma$ is a singular cubic through $p_1,\ldots,p_4$.
  Moreover, the singularity is $\k$-rational, and its preimage under $\rho$ gives $r\in X(\k)$ which lies on a smooth fiber of $X$. Hence, an elementary transformation $\varepsilon_r$ centered at $r$ is defined, and $\rho((\varepsilon_r)_*(C))$ is the strange line similar to before.

  Finally, if $C$ is smooth then the number of singular points is $m=0$, and so $d=m'+1\in\{1,2\}$ (because $m'\leq m+1=1$).
\end{proof}

\begin{lemma}\label{lem:DoubleSectionOntoDoubleSection}
  Let $\k$ be a perfect field of characteristic $2$.
  Let $X$ be of type $4$ or $2+2$, and let $\varepsilon_\Omega\colon X\dashrightarrow X$ be an elementary transformation centered at a Galois-orbit $\Omega\subset X$ lying on the strange line.
  Then the strange line is sent onto itself, and if $\varepsilon_\Omega$ fixes the fibration then $\varepsilon_\Omega^2\in\Aut(X)$.
\end{lemma}
\begin{proof}
  Note that the strange line $T\subset X$ is the unique smooth double section with self-intersection $1$: Indeed, if $T\subset X$ is a such a smooth double section, then by Lemma~\ref{lemma:DoubleSectionNumbers}, $T=dL-m'(E_3+E_4)$, with $m'\in\{0,1\}$, and so $1=T^2=d^2-2m'^2$ implies $m'=0$ and $d=1$. Being a double section implies that $T$ is the strange line on $X$.
  As $T$ is tangent to every fiber $f$ going through a point in $\Omega$, blowing up $\Omega$ gives strict transforms $\tilde C$ and $\tilde f$ that intersect in a point on the exceptional divisor. In particular, $({\varepsilon_\Omega})_*(T)$ is again a smooth double section with self-intersection $1$.

  In other words, if $\varepsilon_\Omega$ fixes the fibration then it sends the fiber through $q\in\Omega\subset T$ onto the intersection point of $T$ with that same fiber, which is $q$. Hence, $\varepsilon_\Omega^2$ has no base points.
\end{proof}

\subsection{Explicit equations for odd extensions of $\F_2$, and proof of Proposition~\ref{prop:ParametrizationFiberingType}}

In this section, we fix the following two polynomials $f_2,f_4\in\F_2[x]$:
\begin{itemize}
  \item $f_2=x^2+x+1\in\F_2[x]$ is irreducible with two distinct roots in $\F_4$, and
  \item $f_4=x^4+x+1\in\F_2[x]$ is irreducible with four distinct roots in $\F_{16}$.
\end{itemize}
Therefore, $f_2$ (respectively $f_4$) is irreducible over $\k=\F_{2^N}$ for $N$ odd, with roots in $\k_2=\F_{2^{2N}}$ (respectively in $\k_4=\F_{2^{4N}}$).

\begin{notation}\label{not:EquationsX}
  Let $\k=\F_{2^N}$ with $N$ odd, and $f_2,f_4\in\k[x]$ as directly above.
  We fix equations for $X$ of type $4$, respectively type $2+2$:
  \begin{enumerate}
    \item For $X$ of type $4$, let $\xi_1,\ldots,\xi_4\in\k_4$ be the roots of $f_4$. Then
    \[\PPPP=\{p_i=[1:\xi_i:\xi_i^2]\}_{i=1}^4\]
    is a Galois orbit of size $4$.
    The two conics given by the zero set of \[F_0=y^2+xz,  \text{ and } F_1=x^2+xy+z^2\] define a basis of the pencil of conics through $\PPPP$.
    \item For $X$ of type $2+2$, let $\omega,\omega+1\in\k_2$ be the roots of $f_2$.
    We consider the union $\PPPP=\QQQQ_1\cup\QQQQ_2$ of the two Galois orbits $\QQQQ_1=\{[1:0:\omega],[1:0:\omega+1]\}$, $\QQQQ_2=\{[1:1:\omega],[1:1:\omega+1]\}.$
    The two conics given by the zero set of \[F_0=y(x+y) \text{ and } F_1=x^2+xz+z^2\] define a basis of the pencil of conics through $\PPPP$.
  \end{enumerate}
  In both cases, we fix the fibration of conics $\pi\colon\p^2\dashrightarrow\p^1$,
  \[\pi\colon[x:y:z]\mapsto[F_0:F_1],\]
  and we will write $\JJ_2$ (respectively $\JJ_4$) for the group $\Bir_\k(\p^2,\pi)$ of birational maps preserving the fibration in the case of type $2+2$ (respectively type $4$).
  The blow-up $\rho\colon X\to\p^2$ at $\PPPP$ can be described as the projection of the surface
  \[X=\{([x:y:z],[s:t])\in\p^2\times\p^1\mid tF_0-sF_1=0\}\]
  to $\p^2$, and we denote the projection $X\to\p^1$ again by $\pi$.

  Note that in these coordinates, the strange line with respect to $\PPPP$ is given by $x=0$.
\end{notation}

\bigskip
From now on until the end of this section, we will assume Notation~\ref{not:EquationsX}.
In these coordinates, we give now explicit equations for maps fixing the fibration that also fix the strange line.

\begin{lemma}\label{lem:SpecialInvolutions}
  Let $\k=\F_{2^N}$ with $N$ odd. For $i\in\{2,4\}$ write $t=F_1/F_0$ with $F_0,F_1$ as in Notation~\ref{not:EquationsX}.
  For $[u:v]\in\p^1(\k(t))$,
  \begin{enumerate}
    \item if $i=4$ set $a(u,v)=\frac{u+tv}{tu^2+v^2}$, respectively,
    \item if $i=2$ set $a(u,v)=\frac{tu+v}{tu^2+v^2}$.
  \end{enumerate}
   Then,
  \[\chi_{i,[u:v]}\colon [x:y:z] \mapsto [x:ua(u,v)x+y:va(u,v)x+z]\]
  is an involution in $\Bir_{\k}(\p^2)$ that fixes the fibration $\pi_i$.
  Moreover, it fixes the strange line $x=0$.
\end{lemma}

\begin{proof}
  A direct computation with the explicit definition gives that $\chi_{i,[u:v]}$ are involutions in characteristic $2$, and that they fix $x=0$.
  To check that they fix the fibration, write $b=ua$, $c=va$. If $i=4$ then a small computation gives $t(b^2+c)-(b+c^2)=0$ and so
  \begin{align*}
    \pi\circ\chi_{4,[u:v]}([x:y:z])&= [x^2(b^2+c)+y^2+xz:x^2(b+c^2)+x^2+xy+z^2]\\
    &= [x^2(b^2+c)+(y^2+xz):x^2(b+c^2)+(x^2+xy+z^2)]\\
    &= [x^2(b^2+c)+(y^2+xz):x^2t(b^2+c)+t(y^2+xz)]\\
    &=[1:t]=[F_0:F_1].
  \end{align*}
  Similar for $i=2$ (but with $t(b+b^2)-(c+c^2)=0$).
\end{proof}

\begin{proposition}\label{proposition:ExplicitFamily}
  Let $\k=\F_{2^N}$ with $N$ odd, and let $i\in\{2,4\}$.
  Let $\varphi\in\Bir_\k(\p^2/\pi_i)$ be such that
  $\varphi$ fixes the strange line $x=0$.
  Then $\varphi=\varphi_{i,[u:v]}$ for some $[u:v]\in\p^1(\k(t))$ with $t={F_1}/{F_0}$ as in Lemma~\ref{lem:SpecialInvolutions}.
\end{proposition}

\begin{proof}
  Let $A=(a_{ij})_{ij}\in\PGL_3(\k(t))$ be such that it preserves the conic $C\colon F_1+tF_0$ as in Notation~\ref{not:EquationsX}. By assumption, $A$ preserves the line $x=0$ (implying $a_{12}=a_{13}=0$), and hence the intersection point with $C$, which is $[0:1:\sqrt{t}]$.
  This implies $a_{32}=ta_{23}$ and $a_{22}=a_{33}$.
  So we can write $A$ as
  \[A =
    \left(\begin{smallmatrix}
      1 & 0 & 0\\
      b & d & e\\
      c & et & d\\
    \end{smallmatrix}\right)\in\PGL_3(\k(t)).\]

  All tangents to $C$ are of the form $T\colon (tz_0+y_0)x+x_0y+tx_0z$ for some $[x_0:y_0:z_0]\in C$. In particular, every tangent contains $[0:t:1]$. Hence $A$ fixes $[0:t:1]$.
  This implies that $e=et^3$ and hence $e=0$. In particular, we can assume $d=1$.
  So $A$ is given by the map \[[x:y:z]\mapsto[x:bx+y:cx+z].\]

  \textbf{Case $\JJ_4$}: The assumption that $A$ maps $C$ onto $C$ gives for $[x:y:z]\in C$ that
  \begin{align*}
    0 &= t(y^2+axz)+a^2x^2+axy+z^2+ x^2(t(b^2+ac)+ab+c^2).
  \end{align*}
  Replacing $ty^2+z^2=txz+x^2+xy$, we find $a=1$ and $(b,c)\in\A^2(\k(t))$ lie on the conic $t(b^2+c)+b+c^2=0$.
  The projection from $(b,c)=(0,0)$ (which corresponds to $\id_{\p^2}$) gives a parametrization $[u:v]\mapsto[tu^2+v^2:u(u+tv):v(u+tv)]$.

  \textbf{Case $\JJ_2$}: Similar as above, the assumption that $A$ maps $C$ onto $C$ gives $a=1$ and $(b,c)\in\A^2(\k(t))$ satisfy $t(b+b^2)+c+c^2=0$.
  The projection from $(b,c)=(0,0)$ gives a parametrization $[u:v]\mapsto[tu^2+v^2:u(tu+v):v(tu+v)]$.
\end{proof}

The next goal is to prove Proposition~\ref{prop:ParametrizationFiberingType}. For this, we will show the importance of the birational maps in $\Bir_\k(\p^2/\pi_i)$ that fix the strange line (Proposition~\ref{proposition:GeneratorsOfBirationalMap4}). Let us look again at double sections!

\bigskip

Given $X$ of type $4$ or $2+2$ as in Notation~\ref{not:EquationsX}, we introduce now the surface $Y$ that is the fiber product of $X$ and $\PPP^1$ over $\PPP^1$ with $\Frob_2\from\PPP^1\to\PPP^1$, $[s:t]\mapsto[s^2:t^2]$, giving rise to the commutative diagram
\begin{center}
  \begin{tikzpicture}
    \matrix (m) [matrix of math nodes,row sep=1em,column sep=2em,minimum width=2em]
    {
        Y=X\times_{\PPP^1}\PPP^1 & X  \\
        \PPP^1 & \PPP^1. \\};
        \path[-stealth]
        (m-1-1) edge[] node[above] {$F$} (m-1-2)
        (m-1-1) edge node[left] {$\pi_Y$} (m-2-1)
        (m-2-1) edge[] node[below] {$\Frob_2$} (m-2-2)
        (m-1-2) edge node[right] {$\pi$} (m-2-2)
        ;
      \end{tikzpicture}
\end{center}
In other words, $Y$ is given by the equation \[Y\colon t^2F_0+s^2F_1=0\p^2\times\p^1.\]

\begin{lemma}\label{lem:SingularitiesOfY}
  Let $\k=\F_{2^N}$ with $N$ odd.
  Let $X$ be of type $4$ or $2+2$ and let $Y$ be as above.
  Then $Y$ is singular and it has exactly three singularities over $\bar\k$:
  If $X$ is of type $4$, the singularities are $([0:1:a],[1:a^2])$ for $a\in\F_4^*$. If $X$ is of type $2+2$, the singularities are $([0:a:b],[a:b])$ for $[a:b]\in\p^2(\F_2)$.
  Moreover, $x=0$ in $Y$ is non-reduced.
\end{lemma}

\begin{proof}
  Note that the derivatives of $F=t^2F_0+s^2F_1$ with respect to $s,t$ are zero. Taking $\frac{\partial F}{\partial y}=\frac{\partial F}{\partial z}=0$ implies $x=0$. Hence $0=t^2y^2+s^2z^2=(ty+sz)^2$, so $[y:z]=[s:t]$.
  For type $4$, we have $0=\frac{\partial F}{\partial x}=t^2z+s^2y$ and so $[y:z]=[t^2:s^2]$. Hence $[y:z]=[1:a]$ with $a^3=1$, giving $a\in\F_4^*$.
  For type $2+2$, we have $0=\frac{\partial F}{\partial x}=t^2y+s^2z$ and so $[y:z]=[s^2:t^2]$. Hence $[y:z]=[y^2:z^2]$, which is equivalent to $[y:z]\in\p^2(\F_2)$.
  Finally, inserting $x=0$ in the equation of $Y$ gives $(ty+sz)^2=0$.
\end{proof}

\begin{lemma}[Sections vs.~double sections]\label{lemma:SectionVsDoubleSection}
  Let $\k=\F_{2^N}$ with $N$ odd.
  Given $X$ of type $4$ or $2+2$, let $Y$ in $\PPP^2\times\PPP^1$ be as defined directly above.
  Let $F\from Y\to X$ be the map given by $(p,[s:t])\mapsto (p,[s^2:t^2])$.
  Let $a,b,c\in\k[s,t]$ be three homogeneous polynomials of the same degree without common factors.
  Consider the maps $\sigma,\sigma'\from\PPP^1\to\PPP^2\times\PPP^1$ given by \begin{align*}
    \sigma\from& [s:t]\mapsto ([a:b:c],[s^2:t^2]),\\
    \sigma'\from& [s:t]\mapsto ([a:b:c],[s:t]).
  \end{align*}
  Then, $\sigma\from\PPP^1\to X$ is a double section if and only if $\sigma'\from\PPP^1\to Y$ is a section.

  In other words, if $C\subset X$ and $C'\subset Y$ are curves with $F(C')=C$, then $C$ is a double section in $X$ if and only if $C'$ is a section in $Y$.
\end{lemma}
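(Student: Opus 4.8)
The plan is to reduce the statement to a single polynomial identity shared by $X_\bullet$ and $Y_\bullet$, after which the ``section'' and ``double section'' conditions become almost formal; the one genuine difficulty is the birationality clause in the definition of a double section.

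First I would record that the equation of $Y_\bullet$ is obtained from that of $X_\bullet$ by replacing the base coordinate $[s:t]$ with $[s^2:t^2]$. Writing the equation of $X_\bullet$ as $t'Q_1+s'Q_2=0$ (with $Q_1=y^2+xz$ and $Q_2=x^2+xy+z^2$ in the case $\bullet=4$, and the analogous quadratics for $\bullet=2$), the point $([a:b:c],[s^2:t^2])$ lies on $X_\bullet$ exactly when $t^2Q_1(a,b,c)+s^2Q_2(a,b,c)=0$ in $\FFF_2[s,t]$, and $([a:b:c],[s:t])$ lies on $Y_\bullet$ under the very same condition. Since $a,b,c$ have no common factor, $\sigma$ and $\sigma'$ are genuine morphisms to $\PPP^2\times\PPP^1$, and each lands in $X_\bullet$, resp.\ $Y_\bullet$, precisely when this common identity holds. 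Hence \emph{$\sigma$ has image in $X_\bullet$ if and only if $\sigma'$ has image in $Y_\bullet$}.

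The fibration conditions are then automatic: reading off the $\PPP^1$-coordinate gives $\pi_\bullet\comp\sigma=\Frob$ and $\pi_Y\comp\sigma'=\id_{\PPP^1}$. Thus, once $\sigma'$ maps into $Y_\bullet$ it is already a section, and once $\sigma$ maps into $X_\bullet$ it already satisfies the first requirement for a double section. It therefore remains only to check that a $\sigma$ landing in $X_\bullet$ is automatically birational onto its image.

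This last point is the main obstacle, since in characteristic $2$ a morphism of curves may be bijective on $\overline{\FFF_2}$-points and yet have degree $2$ (the Frobenius being the cautionary example), so injectivity on points is not enough. I would settle it with a degree count. Put $C=\sigma(\PPP^1)$, which is a curve because $\pi_\bullet\comp\sigma=\Frob$ is non-constant, and which is defined over $\FFF_2$. From $\pi_\bullet|_C\comp\sigma=\Frob$ and multiplicativity of degrees, $\deg(\sigma)\cdot\deg(\pi_\bullet|_C)=\deg(\Frob)=2$. If $\deg(\pi_\bullet|_C)=1$, then the generic point of $C$ would furnish an $\FFF_2(t)$-rational point of the generic fibre of $\pi_\bullet$, contradicting Remark~\ref{remark:NoSections}; hence $\deg(\pi_\bullet|_C)=2$ and $\deg(\sigma)=1$, so $\sigma$ is birational onto $C$ and is a double section. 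Chaining the equivalences gives $\sigma$ double section $\Leftrightarrow$ $\sigma$ maps into $X_\bullet$ $\Leftrightarrow$ $\sigma'$ maps into $Y_\bullet$ $\Leftrightarrow$ $\sigma'$ section. Finally, since $F\comp\sigma'=\sigma$ gives $F(C')=C$ and $F$ is bijective on points, this equivalence transfers verbatim to the ``in other words'' statement about the curves $C\subset X_\bullet$ and $C'\subset Y_\bullet$.
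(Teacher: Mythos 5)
Your proof is correct, and its skeleton matches the paper's: both reduce the lemma to the observation that $([a:b:c],[s^2:t^2])\in X_\bullet$ and $([a:b:c],[s:t])\in Y_\bullet$ are the literally identical polynomial condition, note that the two fibration conditions hold by construction, and isolate the birationality of $\sigma$ onto its image as the only substantive point, settled by Remark~\ref{remark:NoSections}. Where you diverge is in how that last step is executed. The paper works with the tower $k(u^2)\subset k(C)\subset k(u)$, $u=\sfrac{s}{t}$, and in the bad case $k(C)=k(u^2)$ writes $a,b,c$ as polynomials in $s^2,t^2$, descends the parametrization through Frobenius, and exhibits an explicit section of $X_\bullet$, contradicting the ``no sections'' half of the remark; you instead use multiplicativity of degrees in $\pi_\bullet|_C\comp\sigma=\Frob$ and observe that the bad case $\deg(\pi_\bullet|_C)=1$ makes the generic point of $C$ an $\FFF_2(t)$-rational point of the generic fibre, contradicting the ``no rational points'' half. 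These are the same dichotomy in different clothes --- $k(C)=k(u^2)$ is precisely the case $\deg(\pi_\bullet|_C)=1$ --- but your packaging is a bit cleaner: it avoids the explicit descent of the parametrization, and the degree count is exactly the right tool given the characteristic-$2$ trap you flag (bijectivity on $\overline{\FFF_2}$-points, which Lemma~\ref{lemma:DoubleSectionIsDouble} provides, would not suffice). What the paper's version buys is a concrete forbidden section in hand, so it only needs the weaker conclusion of the remark; yours needs the statement about the conic itself, which the remark also supplies.
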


\begin{proof}
  First, if $([a:b:c],[s^2:t^2])\in X$, then $([a:b:c],[s:t])\in Y$. Hence, if $\sigma\from\PPP^1\to X$ is a double section, then $\sigma'\from\PPP^1\to Y$, and $\sigma'$ is a section.

  It is clear that $\sigma([s:t])$ lies in $X$ if and only if $\sigma'([s:t])$ lies in $Y$.
  In this case, we also have directly that $\pi\comp\sigma=\Frob_2$ and that $\pi_Y\comp\sigma'=\id_{\PPP^1}$.

  So it is enough to show that when $\sigma'\from\PPP^1\to Y$ is a section, then $\sigma\from\PPP^1\to\sigma(\PPP^1)=C$ is birational.
  Writing $C'=\sigma'(\PPP^1)$ we get a commutative diagram
  \begin{center}
    \begin{tikzpicture}
      \matrix (m) [matrix of math nodes,row sep=1em,column sep=4em,minimum width=2em]
      {
          C' & C  \\
          \PPP^1 & \PPP^1. \\};
          \path[-stealth]
          (m-1-1) edge[] node[above] {$F$} (m-1-2)
          (m-1-1) edge node[right] {$\pi_Y$} (m-2-1)
          (m-2-1) edge[] node[below] {$\Frob_2$} (m-2-2)
          (m-1-2) edge node[right] {$\pi$} (m-2-2)
          (m-2-1) edge node[above] {$\sigma$} (m-1-2)
          (m-2-1) edge[bend left] node[left] {$\sigma'$} (m-1-1)
          ;
        \end{tikzpicture}
  \end{center}
  Since $\sigma\from\PPP^1\to C$ is dominant, we have an inclusion $\sigma^*\from \k(C)\hookrightarrow{} \k(\PPP^1)$.
  We prove that $\sigma^*\from k(C)\simto \k(\PPP^1)$ is an isomorphism, implying that $\sigma\from \PPP^1\to C$ is birational.
  With the parametrization of $C$ we see that $\frac{s^2}{t^2}\in \k(C)$.
  Letting $u=\frac{s}{t}$ we get an inclusion of fields $\k(u^2)\subset \k(C)\subset \k(\PPP^1)=\k(u)$.
  Since $\k(u^2)\subset \k(u)$ is an extension of degree $2$, either $\k(C)=\k(u)$ and we are done, or $\k(C)=\k(u^2)$.
  We will find a contradiction to the latter case.
  Assuming that $c\neq0$ (otherwise $a$ or $b$ is non-zero and we proceed analogously), we have that $\frac{a}{c}$ and $\frac{b}{c}$ are elements in $\k(C)=\k(u^2)$, hence $a,b,c\in\k[s^2,t^2]$.
  So we may write $a(s,t)=\alpha(s^2,t^2)$, $b(s,t)=\beta(s^2,t^2)$, and $c(s,t)=\gamma(s^2,t^2)$ for some $\alpha,\beta,\gamma\in\k[s,t]$ homogeneous of the same degree, which we may choose without common divisor.
  We consider the morphism $\tau\from\PPP^1\to\PPP^2\times\PPP^1$ given by $[s:t]\mapsto \left([\alpha(s,t):\beta(s,t):\gamma(s,t)],[s:t]\right)$ and observe that $\pi\comp\tau=\id_{\PPP^1}$.
  So $\tau$ is a section on $X$, which does not exist (see Remark~\ref{remark:NoSections}.
\end{proof}

\begin{lemma}\label{lemma:FiberProduct}
  Let $\k=\F_{2^N}$ with $N$ odd.
  Given $X$ of type $4$ or $2+2$, let $Y$ be as above.
  The rational map $\varphi\from Y\dashto \PPP^1_{[u:v]}\times\PPP^1_{[s:t]}$ given by
  \[
    ([x:y:z],[s:t])\mapsto ([sx:ty+sz],[s:t])
  \]
  is birational. Moreover, the following hold:
  \begin{enumerate}
    \item\label{it:FiberProduct--deg} The inverse map is of degree $3$ in $s,t$ for type $4$, and of degree $2$ in $s,t$ for type $2+2$.
    \item\label{it:FiberProduct--def} If $f$ is a smooth fiber given by $\lambda t+\mu s=0$ with $[\lambda:\mu]\in\p^1(\bar\k)\setminus\p^1(\F_2)$, then $\varphi|_f$ is an isomorphism.
    \item\label{it:FiberProduct--xis0}  $x=0$ in $Y$ is mapped onto $u=0$ in $\p^1\times\p^1$.
  \end{enumerate}
\end{lemma}

\begin{proof}
  It is enough to observe that the conic $C_t\colon tF_0+F_1=0$ in $\p^2_{\k(t)}$ admits a $\k(t)$-isomorphism to $\p^1_{\k(t)}$, given by $[x:y:z]\mapsto[x:ty+z]$ when $[x:y:z]\neq [0:1:t]$, and it is given by
  \begin{align*}
    [x:y:z]&\mapsto [ty+z:x+y+t^2z]&&\text{if $[x:y:z]\neq [t^3+1:1:t]$ (type $4$), respectively}\\
    [x:y:z]&\mapsto [ty+z:x+t^2y+z]&&\text{if $[x:y:z]\neq [t^2+t:1:t]$ (type $2+2$)}.
  \end{align*}
  Indeed, one can check that the following are the inverse maps.
  If $X$ is of type $4$, then the inverse is given by
  \begin{align*}
    [u:v] \mapsto [(1+t^3)u^2:u^2+t^2uv+v^2:tu^2+uv+tv^2].
  \end{align*}
  If $X$ is of type $2+2$, then the inverse is given by
  \begin{align*}
    [u:v] \mapsto [(t+t^2)u^2:u^2+uv+v^2:tu^2+t^2uv+tv^2].
  \end{align*}
  In particular, this implies \ref{it:FiberProduct--deg}

  \ref{it:FiberProduct--def}: Having $[s:t]\notin\p^2(\F_2)$ means in particular that we can write it as $[1:t]$ with $t\neq0$.
  For the case of type $2+2$ this is equivalent with not being on a singular fiber. Moreover, if $\varphi$ is not defined then $x=t^2+t=0$, and if $\varphi^{-1}$ is not defined then $t^2+t=0$. Hence $\varphi$ and $\varphi^{-1}$ are morphisms whenever $t\notin\F_2$.

  For the case of type $4$, not being on a singular fiber implies $t^3+1\neq 0$. If $\varphi$ is not defined then $x=t^3+1=0$, and if $\varphi^{-1}$ is not defined then $t^3+1=0$. Hence $\varphi$ and $\varphi^{-1}$ are morphisms whenever $t\notin\F_2$ corresponds to a smooth fiber.

  \ref{it:FiberProduct--xis0}: Observe that $x=0$ in $Y$ implies $ty+z=0$, hence $x=0$ is mapped onto $u=0$.
\end{proof}

\begin{remark}\label{rem:ExplicitDoubleSection}
  Lemma~\ref{lemma:FiberProduct} gives explicit equations for how a section in $\p^1\times\p^1$ corresponds to a double section in $X$.
  Since $\varphi\colon Y\dashrightarrow\p^1\times\p^1$ from Lemma~\ref{lemma:FiberProduct} fixes the fibration, a section $\sigma''\colon\p^1\to\p^1\times\p^1$ induces a section on $Y$ which then induces a double section $\sigma=F\circ\varphi^{-1}\circ\sigma''\colon\p^1\to X$ on $X$ (Lemma~\ref{lemma:SectionVsDoubleSection}):
  \begin{center}
    \begin{tikzcd}[row sep=tiny]
       & \p^1\times\p^1 \ar[r,dashed,"\varphi^{-1}"] & Y\ar[r,"F"] & X. \\
       \p^1\ar[ur,"\sigma''"]\ar[urrr,"\sigma",bend right=10]
    \end{tikzcd}
  \end{center}
  Explicitly, if $\sigma''\colon t\mapsto ([1:P(t)],[1:t])$ for some $P(t)\in\k[t]$, then
  \begin{align*}
    \sigma\colon t&\mapsto ([1+t^3:1+t^2P+P^2:t+P+tP^2],[1:t^2]) & \text{(type $4$),}\\
    \sigma\colon t&\mapsto ([t+t^2:1+P+P^2:t+t^2P+tP^2],[1:t^2]) & \text{(type $2+2$).}
  \end{align*}
  Note that if the degree of $P(t)$ is at least $1$, then $C=\sigma(\p^1)\subset X\subset\p^2\times\p^1$ has degree $2\deg(P)+1$ in $x,y,z$.

  We describe now the double section $C=\sigma(\p^1)\subset X$ in the case when $P(t)=a\in\k$ is a constant polynomial. Let $\Gamma=\rho(C)\subset \p^2$.
  (Recall that $\rho\colon X\to\p^2$ is the blow-up of four points $\PPPP$ as in Notation~\ref{not:EquationsX}.)

  For type $4$, the curve $\Gamma$ is the cubic given by
  \[(a^2+1)xy^2+a^2x^2z+y^3+z^3=0,\]
  which contains $\PPPP$ and has a cusp at $[1:0:a]$.

  For type $2+2$, the curve $\Gamma$ is the conic
  \[a^2x^2+(a^2+1)xy+yz+z^2=0,\]
  which contains the orbit $\{[1:1:\omega],[1:1:\omega+1]\}\subset\PPPP$ and is tangent to $y=0$ (which is the line passing through the other orbit in $\PPPP$) at the point $[1:0:a]$.
\end{remark}

\begin{lemma}[Lagrange on $\PPP^1\times\PPP^1$]\label{lemma:LagrangePolynomial}
  Let $\kk$ be a perfect field.
  Let $\{p_1,\ldots,p_n\}$ in $\PPP^1\times\PPP^1$ be $\Gal(\bar\kk/\kk)$-orbit of $n\geq2$ points such that no two points lie on the same fiber (with respect to the projection onto the second $\PPP^1$).
  Then either all points are of the form $([0:1],[1:t_i])$, or there exists a polynomial $L(t)\in \kk[t]$ of degree at most $n-1$ such that $p_i=([1:L(t_i)],[1:t_i])$ for $i=1,2,\ldots,n$ and some $t_i\in\bar\k$.
\end{lemma}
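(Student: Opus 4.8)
The plan is to produce a Lagrange interpolation polynomial over $\bar\kk$ and then descend it to $\kk$ by a Galois-invariance argument. Since $\{p_1,\dots,p_n\}$ is a single orbit, the Galois group acts transitively on the index set $\{1,\dots,n\}$: for $g\in\Gal(\bar\kk/\kk)$ there is a permutation, which I also write $g$, with $g\cdot p_i=p_{g(i)}$. Both projections $\PPP^1\times\PPP^1\to\PPP^1$ are defined over $\kk$ and hence Galois-equivariant, so writing $p_i=(a_i,b_i)$ we have $g(a_i)=a_{g(i)}$ and $g(b_i)=b_{g(i)}$ for all $i$ and all $g$.

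First I would normalise the second coordinates. By hypothesis the $b_i$ are pairwise distinct. The point $[0:1]\in\PPP^1$ is $\kk$-rational, hence fixed by the Galois action; if some $b_j=[0:1]$, transitivity would force $b_i=[0:1]$ for every $i$, contradicting distinctness as $n>1$. Therefore each $b_i$ can be written as $b_i=[1:t_i]$ with the $t_i\in\bar\kk$ pairwise distinct. The same fixedness of $[0:1]$ yields the dichotomy on the first coordinates: again by transitivity either $a_i=[0:1]$ for every $i$, which is exactly the first alternative of the statement, or $a_i\neq[0:1]$ for every $i$. In the latter case I write $a_i=[1:s_i]$ with $s_i\in\bar\kk$, and the equivariance relations become $g(t_i)=t_{g(i)}$ and $g(s_i)=s_{g(i)}$.

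Finally I would produce the polynomial. Since the $t_i$ are distinct there is a unique $L\in\bar\kk[t]$ of degree at most $n-1$ with $L(t_i)=s_i$ for all $i$, by Lagrange interpolation. To see that $L$ in fact has coefficients in $\kk$, fix $g\in\Gal(\bar\kk/\kk)$ and let $L^g$ be the polynomial obtained by applying $g$ to the coefficients of $L$. Then for every $i$,
\[
  L^g(t_{g(i)})=L^g(g(t_i))=g\bigl(L(t_i)\bigr)=g(s_i)=s_{g(i)}=L(t_{g(i)}),
\]
and as $i$ ranges over all indices so does $g(i)$; thus $L^g$ and $L$ agree on the $n$ distinct points $t_1,\dots,t_n$ while both have degree $\leq n-1$, forcing $L^g=L$ by the uniqueness of the interpolant. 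Hence every coefficient of $L$ is fixed by $\Gal(\bar\kk/\kk)$, and since $\kk$ is perfect this gives $L\in\kk[t]$ with $\deg L\leq n-1$ and $p_i=([1:L(t_i)],[1:t_i])$.

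The one step requiring care is precisely this descent: it is the interplay between the Galois-equivariance of the interpolation data $(t_i,s_i)$ and the uniqueness of the degree-$\leq n-1$ interpolant that pins the coefficients of $L$ down to $\kk$. By contrast, the two ``all-or-nothing'' alternatives (for the $b_i$ and for the $a_i$) are both immediate consequences of the single observation that the $\kk$-rational point $[0:1]$ cannot lie in a transitive Galois orbit of size greater than $1$ without the entire orbit collapsing onto it.
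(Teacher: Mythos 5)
Your proof is correct and follows essentially the same route as the paper: normalise coordinates using the $\kk$-rationality of $[0:1]$ to get the dichotomy, then construct the Lagrange interpolant and descend it to $\kk$ by Galois invariance. The only cosmetic difference is that you deduce $L^g=L$ from uniqueness of the degree-$\leq n-1$ interpolant, whereas the paper checks invariance directly on the explicit formula $L=\sum v_j l_j$ via $\sigma(l_j)=l_{\sigma(j)}$; both hinge on the same equivariance of the data $(t_i,s_i)$.
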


\begin{proof}
  We write $p_i=([u_i:v_i],[s_i:t_i])$, with $[s_i:t_i]\neq[s_j:t_j]$ for $j\neq i$ (since they all lie on different fibers).
  If one of the $s_i$ is equal to zero, then all points lie on one fiber, a contradiction to $n\geq2$.
  So we can assume that $s_i=1$ for all $i$, and $t_j\neq t_i$ for all $j\neq i$.
  If $u_i=0$ for one $i$ -- and hence for all -- we are done.
  So we assume now that $u_i\neq 0$ and write $p_i=([1:v_i],[1:t_i])$.
  Since the $p_i$ form an orbit, any $\sigma\in\Gal(\bar \kk/\kk)$ induces an action on the indices:
  We write $\sigma(i)$ for the index $j$ such that $\sigma(p_i)=p_j$.
  Hence, $\sigma(p_i)=p_j$ if and only if $(\sigma(v_i),\sigma(t_i))=(v_j,t_j)=(v_{\sigma(i)},t_{\sigma(i)})$.
  Consider now the Lagrange polynomial
  \[
    L(t) = \sum v_jl_j(t),
  \]
  where $l_j=\prod_{1\leq k\leq n, k\neq j}\frac{t-t_k}{t_j-t_k}\in\overline{\kk}[t]$.
  So $L\in\overline{\kk}[t]$ is a polynomial of degree at most $n-1$.
  In fact, $L$ is defined over $\kk$:
  For all $\sigma\in\Gal(\bar\kk/\kk)$ we have $\sigma(l_j)=l_{\sigma(j)}$, which implies that $\sigma(v_jl_j)=v_{\sigma(j)}l_{\sigma(j)}$.
  Therefore, $\sigma(L)=L$, implying that $L$ is defined over $\kk$, and $p_i=([1:L(t_i)],[1:t_i])$.
\end{proof}

\begin{lemma}\label{lemma:DoubleSectionThroughLargeOrbit}
  Let $\k=\F_{2^N}$ with $N$ odd.
  Let $X$ be of type $4$ or $2+2$ as in Notation~\ref{not:EquationsX}.
  Let $\{p_1,\ldots,p_n\}\subset X$ be a Galois orbit of size $n\geq2$ such that all points lie on different smooth fibers.
  Then there exists a double section $C\subset X$ containing the orbit such that either $\Gamma=\rho(C)\subset \PPP^2$ has degree $2d+1\leq 2n-1$ for some $d\geq1$, or $\Gamma$ is the strange line, or $\Gamma$ is a cubic and $X$ is of type $4$, or $\Gamma$ is a conic and $X$ is of type $2+2$.
\end{lemma}

\begin{proof}
  The map $F\from Y\to X$ given by $(p,[s:t])\mapsto(p,[s^2:t^2])$ as in the proof of Lemma~\ref{lemma:SectionVsDoubleSection} is bijective on the $\overline{\k}$-points, and it maps distinct fibers onto distinct fibers.
  So the points ${F}^{-1}(p_i)$ form an orbit on $Y$, all on distinct fibers that are smooth and not defined over $\k$ (using $n\geq2$).
  So $\varphi\colon Y\dashto\p^1\times\p^1$ from Lemma~\ref{lemma:FiberProduct} restricts to an isomorphism on these fibers, providing points $q_i=\varphi\comp F^{-1}(p_i)\in\PPP^1\times\PPP^1$ that form one orbit, all on distinct fibers.

  By Lemma~\ref{lemma:LagrangePolynomial}, either the $q_i$ are of the form $([0:1],[1:t_i])$ or there exists a polynomial $P\in\k[t]$ of degree $d\leq n-1$ such that $q_i=([1:P(t_i)],[1:t_i])$.
  In the first case, the $p_i$ lie on the double section $x=0$ in $X$ (Lemma~\ref{lemma:FiberProduct}).
  In the second case, we have already seen in Remark~\ref{rem:ExplicitDoubleSection} that either the degree $d$ of $P(t)$ is at least one and the degree of $\Gamma$ is $2d+1\leq2n-1$, or $P(t)=a\in\k$ and either $X$ is of type $4$ and $\Gamma$ is a cubic, or $X$ is of type $2$ and $\Gamma$ is a conic.
\end{proof}

\begin{proposition}\label{proposition:GeneratorsOfBirationalMap4}
  Let $\k=\F_{2^N}$ with $N$ odd. Let $X$ be of type $4$ or $2+2$ as in Notation~\ref{not:EquationsX}.
  Let $\Omega\subset X$ be a set of points that forms one Galois orbit of size $n\geq1$ such that there is an elementary transformation $\varepsilon_{\Omega}$ centered at $\Omega$.
  Then, $\varepsilon_{\Omega}$ can be written as a product of elements in $\Aut_\k(X)$, elementary transformations centered at a $\k$-rational point, and elementary transformations centered at a set of points forming Galois orbits of size $\leq n$ lying on the double section $x=0$.
\end{proposition}

\begin{proof}
  We do induction on $n$: For $n=1$ there is nothing to prove, so we can assume $n\geq 2$ and the induction assumption is that for all orbits $\QQQQ\subset X$ of size strictly less than $n$ we can decompose elementary transformations centered at $\QQQQ$ into birational maps as in the statement.

  Since $n\geq 2$ and $\varepsilon_{\Omega}$ is defined, all points of $\Omega$ lie on different smooth fibers.
  By Lemma~\ref{lemma:DoubleSectionThroughLargeOrbit} there exists a double section $C\subset X$ containing $\Omega$ such that either $\Gamma=\rho(C)\subset \p^2$ has degree $2d+1\leq 2n-1$ for some $d\geq 1$, or $\Gamma$ is the strange line, or $d\in\{2,3\}$.
  If $\Gamma$ is the strange line we are done.
  If $d\in\{2,3\}$ there exists an elementary transformation $\varepsilon_r$ centered at a $\k$-rational point $r\in X(\k)$ that sends $C$ onto $x=0$ by Lemma~\ref{lemma:SmoothDoubleSection}.
  Set $\Omega'=\varepsilon_r(\Omega)$, which is one Galois orbit of size $n$ that lies on $x=0$. Note that an elementary transformation $\varepsilon_{\Omega'}$ is defined.
  Set $s=\varepsilon_{\Omega'}(\Bas(\varepsilon_{r}^{-1}))\in X(\k)$, and one can choose an elementary transformation $\varepsilon_s$ centered at $s$ such that \[\varepsilon_{\Omega} = \varepsilon_s\circ\varepsilon_{\Omega'}\circ\varepsilon_r.\]
  This proves in particular that if $\Omega$ lies on a smooth double section, then there exists a decomposition of $\varepsilon_\Omega$ into elementary transformations as in the statement.

  We consider now the case when $\Gamma$ is of odd degree $2d+1\leq2n-1$. Writing $m$ for the number of singular points of $C$ as in Lemma~\ref{lemma:DoubleSectionNumbers} we find that $2d+1=2m+1$, and hence $d=m$. That is, $C$ has $d\leq n-1$ singular points, and they form Galois orbits of size at most $n-1$. In particular, none of the points lies on the same fiber as a point of $\Omega$.
  By Lemma~\ref{lem:ResolveAk}, there exists a sequence $\varphi=\varepsilon_{\QQQQ_l}\circ\cdots\circ\varepsilon_{\QQQQ_1}$ of elementary transformations centered at Galois orbits of size at most $n-1$ that resolves the singularities of $C$. By the induction hypothesis, there is a decomposition of each $\varepsilon_{\QQQQ_i}$ into elementary transformations as in the statement.
  So $\Omega'=\varphi(\Omega)$ forms a Galois orbit of size $n$ on the smooth double section $C'=\varphi_*(C)\subset X$. By the previous step, there is a decomposition of $\varepsilon_{\Omega'}$ into elementary transformations as in the statement.
  Finally, the base points of $\varepsilon_{\Omega'}\circ\varphi\circ\varepsilon_{\Omega}$ can be resolved with $l$ elementary transformations centered at Galois orbits of the same size as the ones in $\varphi$, that is, they are all of size at most $n-1$.
  By induction, we obtain a decomposition of $\varepsilon_{\Omega}$ as in the statement.
\end{proof}

\begin{lemma}\label{lem:F2ExactSequence}
  Let $\k=\F_{2^N}$ with $N$ odd. For $i\in\{2,4\}$ let $\pi\colon\p^2\dashrightarrow\p^1$ as in Notation~\ref{not:EquationsX} and consider the exact sequence
  \[1\to\Bir_\k(\p^2/\pi_i)\to\Bir_\k(\p^2,\pi_i)=\JJ_i\to\Aut_\k^{\pi_i}(\p^1)\to1.\]
  Then $\Aut_\k^{\pi_4}\simeq\Z/2\Z$ and $\Aut_\k^{\pi_2}(\p^1)\supset \Z/2\Z$.

  Moreover, if $\k=\F_2$, then the exact sequence involving $\JJ_2$ splits with $\Aut_\k^{\pi_2}(\p^1)=\Z/2\Z$, and the exact sequence involving $\JJ_4$ does not split.
\end{lemma}

\begin{proof}
  Case $\JJ_4$: By Lemma~\ref{lem:SingularFibersOntoSingularFibers}, any element in $\JJ_4$ permutes the three singular fibers.
  In $X$, there is one singular fiber over a point in $\p^1(\k)$, which therefore has to be fixed by any element of $\JJ_4$, and two over two points in $\p^1(\k_2)$ forming one Galois-orbit. Since $\alpha\in\Aut_\k(\p^1)$ is uniquely defined by the action on three points, either $\alpha$ is the identity, or $\alpha$ fixes the $\p^1(\k)$-point and exchanges the two points in the Galois orbit of size $2$, that is, $\alpha([s:t])=[t:s]$ in the coordinates of Notation~\ref{not:EquationsX}.
  In fact, the following map $\beta\in\Aut_\k(\p^2)\cap\JJ_4$ realizes the latter:
   \[[x:y:z]\mapsto[x:z:x+y].\]
  In fact, any element in $\alpha_4\in\Aut_\k^{\pi_4}(\p^1)$ is realized by an element in $\Aut_\k(\p^2)\cap\JJ_4$ by \cite[Lemma~3.20]{LamySchneider}.
  Checking all $168$ elements of $\Aut_{\F_2}(\p^2)$, one observes that $\beta$ and $\beta^{-1}$, which are elements of order $4$, are the only elements that lie in $\JJ_4$ and that induce $\alpha$. In particular, the sequence does not split.

  Case $\JJ_2$: Note that $\gamma\in\Aut_\k(\p^2)\cap\JJ_2$ given by
    \[\gamma\colon[x:y:z]\mapsto[x:y:y+z]\]
    realizes the involution $\alpha\colon[s:t]\mapsto[s:s+t]$.
  If $\k=\F_2$, the three singular fibers are exactly the three fibers over $\p^1(\F_2)$. By Lemma~\ref{lem:SingularFibersOntoSingularFibers}, any element of $\JJ_2$ preserves the set of the two fibers without $\F_2$-components, and so the third singular fiber has to be sent onto itself.
  Moreover, since $\gamma$ is of order $2$, the sequence splits.
\end{proof}

We are ready for the proof of Proposition~\ref{prop:ParametrizationFiberingType}.

\begin{proof}[Proof of Proposition~\ref{prop:ParametrizationFiberingType}]
  Up to conjugation by an element of $\Aut_\k(\p^2)$, we can assume that $\pi$ is $\pi_i$ as in Notation~\ref{not:EquationsX} with $i\in\{2,4\}$ (Lemma~\ref{lemma:SameFourPoints}).

  For \ref{it:ParametrizationFiberingType--fix}, assume that $\varphi$ fixes $\pi$ and it fixes $x=0$. Then $\varphi$ is of the form $\chi_{i,[u:v]}$ as in Proposition~\ref{proposition:ExplicitFamily}.

  For \ref{it:ParametrizationFiberingType--F2}, assume that $\k=\F_2$. Let $\rho\colon X\to\p^2$ be the blow-up of $\p^2$ at the base point of $\pi_i$,
  giving a surface $X$ of type $4$ (case $i=4$), respectively $2+2$ (case $i=2$).
  In the case $i=4$, we have $\varphi=\rho\circ\psi\rho^{-1}$, and $\psi\colon X\dashrightarrow X$ preserves the fibration and admits a decomposition into elementary transformations, and elements in $\Aut_\k(X)$ by Lemma~\ref{lem:DecompositionElementaryTransformations}.
  The same lemma says that in the case $i=2$, also elements conjugate to $\Aut_\k(Y)$ have to be considered, where $Y\in\Cl_6$ is a Mori conic bundle dominated by $X$. However, as seen in Lemma~\ref{lem:F2ExactSequence}, in the case of $\k=\F_2$, these are conjugate by $\rho$ to elements in $\Aut_\k(\p^2)$.

  By Proposition~\ref{proposition:GeneratorsOfBirationalMap4}, the elementary transformations admit a decomposition into
  \begin{enumerate}
    \item elements in $\Aut_\k(X)$ (which are conjugated by $\rho$ to elements in $\Aut_\k(\p^2)$),
    \item elementary transformations centered at a $\k$-rational point (which are conjugated by $\rho$ to an element in $\langle \Aut_\k(\p^2), \JJ_1\rangle$ by Lemma~\ref{lem:ElementaryTransformationOneBp}),
    \item and elementary transformations centered at a Galois orbit of points on the double section $x=0$, which in particular fix the double section $x=0$ (Lemma~\ref{lem:DoubleSectionOntoDoubleSection}), and can be chosen to be fixing the fibration, implying that they are conjugate by $\rho$ to $\chi_{i,[u:v]}$ as in Proposition~\ref{proposition:ExplicitFamily}.
  \end{enumerate}
  This proves the statement.
\end{proof}

\section{Birational Galois descent and minimal del Pezzo surfaces}\label{sec:GaloisDescent}

Let $\k$ be a perfect field and $X,Y$ two smooth projective surfaces defined over $\k$. Given a finite Galois extension $L/\k$, the Galois group $\Gal(L/\k)$ acts on $\Bir_L(X,Y)$ via the action of $g\in\Gal(L/\k)$ on the coefficients of the defining polynomials of $f\in\Bir_L(X,Y)$; denote this action by $g(f)\in\Bir_L(X,Y)$. Hence, $g(f(x))=g(f)(g(x))$ and $g(f)(x)=g(f(g^{-1}(x)))$ for $x\in X$.
Moreover, if $C\subset X$ is a subset defined over $L$ that is contracted by $f$, then $g(C)$ is contracted by $g(f)$ because $g(f)(g(C))= g(f(g^{-1}(g(C))))=g(f(C))$.

\begin{lemma}\label{lem:kStructure}
  Let $\k$ be a perfect field.
  Let $X$ and $Y$ be two smooth projective surfaces defined over $\k$.
  Let $\rho\colon X_L\to Y_L$ be a birational morphism that is defined over a finite Galois extension $L/\k$.
  For each $g\in\Gal(L/K)$, the birational map
  $\varphi_g = g^{-1}(\rho)\circ\rho^{-1}\in\Bir_L(Y)$
  satisfies
  \[
    \rho\circ g \circ \rho^{-1} = g\circ\varphi_g.
  \]
\end{lemma}

\begin{proof}
  Let $g\in\Gal(L/K)$.
  Since $\rho\colon X_L\to Y_L$ is a birational morphism defined over $L$, so is $g^{-1}(\rho)$. Hence, $\varphi_g = g^{-1}(\rho)\circ\rho^{-1}$ is a birational map in $\Bir_L(Y)$, and makes the following diagram commute:
  \[
  \begin{tikzcd}[]
  X \ar[rr,"g"] \ar[d,"\rho"] \ar[dr,"g^{-1}(\rho)"] && X\ar[d,"\rho"] \\
  Y \ar[r,"\varphi_g",dashed]& Y \ar[r,"g"] & Y.
  \end{tikzcd}
  \]
\end{proof}

\begin{definition}
  Let $\k$ be a perfect field and $L/\k$ a finite Galois extension. Let $X,Y$ be two smooth projective surfaces defined over $\k$ with a birational morphism $\rho\colon X_L\to Y_L$.
  The \emph{birational Galois descent of $X$ to $Y$ with respect to $\rho$} consists of the data $\{(g,\varphi_g)\}_{g\in\Gal(L/\k)}$ with $\varphi_g=g^{-1}(\rho)\circ\rho^{-1}\in\Bir_L(Y)$ for each $g\in\Gal(L/K)$.

  For $\Gamma = \rho\circ \Gal(L/K)\circ\rho^{-1}=\{g\circ \varphi_g\mid g\in\Gal(L/K)\}$, we will say that $(Y,\Gamma)$ is the \emph{$\k$-structure} of $X$ given by $\rho$ over $L$, and we will call the set $B$ of base points of $\rho^{-1}$ the \textit{base points} of the $\k$-structure.
\end{definition}

If $\rho$ is an isomorphism then this coincides with the classical notion of \emph{Galois descent} (see for example \cite[Proposition~4.4.4]{poonen}).

\begin{remark}\label{rem:UniqueKStructure}
  Note that if $(Y,\Gamma)$ is a $\k$-structure of two surfaces $X$ and $X'$ over $L$ with base points $B$ and $\varphi_g=\varphi'_g$ for every $g\in\Gal(L/\k)$, then $X$ and $X'$ are isomorphic.
\end{remark}

\smallskip
We say that a set $A$ of proper points on a del Pezzo surface $X$ is in \emph{(del Pezzo) general position} if the blow-up of $X$ at these points is again a del Pezzo surface.
Furthermore, we say that $A$ is in \emph{general position with} a set $B$ of proper points on $X$ if their union is in general position and $A\cap B=\emptyset$.

Given a set $A$ of points on $X$, we are interested in the following questions:
\begin{enumerate}
  \item Is $A$ in general position?
  \item What is the $\Aut_\k(X)$-orbit of $A$?
\end{enumerate}
The following lemma transfers these questions to the $\k$-structure $(Y,\Gamma)$ of $X$.

\begin{lemma}\label{lem:BasicKStructureStuff}
  Let $X,Y$ be two smooth projective surfaces defined over a perfect field $\k$ and $\rho\colon X_L\to Y_L$ a birational morphism defined over a finite Galois extension $L/\k$, giving the $\k$-structure $(Y,\Gamma)$ of $X$. Let $A\subset  X$ be a set of points defined over $L$.
  \begin{enumerate}
    \item\label{it:BasicKStructureStuff--genpos} The points in $A$ are in general position on $X$ if and only if $\rho(A)\subset Y$ is in general position with $\Bs(\rho^{-1})$ on $Y$. In particular, if $A$ is in general position on $X$ then $\rho(A)$ does not contain a base point of the $\k$-structure.
    \item\label{it:BasicKStructureStuff--orbit} Assume that $A\cap \Exc(\rho)=\emptyset$. Then the points of $A$ form a $\Gal(\bar \k/\k)$-orbit if and only if the points in $\rho(A)$ form a $\Gamma$-orbit on $Y$.
    \item\label{it:BasicKStructureStuff--aut} Let $B\subset X$ be a set of points defined over $L$ of the same size as $A$, and assume that both are disjoint with $\Exc(\rho)$. Then $A$ and $B$ are $\Aut_\k(X)$-equivalent if and only if $\rho(A)$ and $\rho(B)$ are
    $\rho\circ\Aut_\k(X)\circ\rho^{-1}$-equivalent.
  \end{enumerate}
\end{lemma}

\begin{proof}
  These are direct consequences of the definitions.
\end{proof}

The following lemma is classical:
\begin{lemma}
  Let $X_d\in\Dl_d$ for some $d\in\{5,6,8\}$ for a perfect field $\k$. There exists a finite Galois extension $L/\k$ such that there exists an $L$-isomorphism $X_L\simto Y$, where $Y$ is one of the following:
  \begin{enumerate}
    \item If $d=8$, then $Y=\p^1_L\times\p^1_L$, and $L$ is the splitting field of the base point of $\p^2_\k\link21 X_8$.
    \item If $d=6$, then $Y=\{([x:y:z],[u:v:w])\mid xu=yv=zw\}\subset\p^2_L\times\p^2_L$ is the blow-up of $\p^2_L$ at the coordinate points $[1:0:0],[0:1:0],[0:0:1]$, and $L=L_2L_3$ where $L_3$ (respectively $L_2$) is the splitting field of the base point of $X_8\link31 X_6$ (respectively $\p^2_\k\link21 X_8$) for some $X_8\in\Dl_8$.
    \item If $d=5$, then $Y$ is the blow-up of $\p^2_L$ at $[1:0:0],[0:1:0],[0:0:1],[1:1:1]$, and $L$ is the splitting field of the base point of $\p^2_\k\link51 X_5$.
  \end{enumerate}
  We call a smallest such extension \emph{splitting field} of $X$.
\end{lemma}

 A classification of del Pezzo surfaces and their automorphisms over any perfect field can be found in \cite{Schneider-Zimmermann} (degree $6$ and $8$) and \cite{boitrel-delPezzo} (degree $5$).
 In the following, we will focus on finite fields, or more generally on quasi-finite fields.
 We will give explicit equations of the $\k$-structure of $X\in\Dl_8\cup\Dl_5\cup\Dl_6$ via birational Galois descent.

 We present here the proof of Proposition~\ref{prop:UniqueMinimaldP}, which relies on the following subsections.

\begin{proof}[Proof of Proposition~\ref{prop:UniqueMinimaldP}]
  A finite field $\k=\F_q$ has a unique extension of degree $e$, namely $\F_{q^e}$, for any integer $e$. For $d=9$, $\Dl_9=\{\p^2\}$ follows from Châtelet's theorem.
  For $d=8$, $|\Dl_8|=1$ follows for example from \cite[Lemma 3.2(3)]{Schneider-Zimmermann}.
  For $d=6$, $|\Dl_6|=1$ follows from \cite[Lemma 4.6(3), and Remark~B.3]{Schneider-Zimmermann}.
  For $d=5$, $|\Dl_5|=1$ follows from Lemma~\ref{lem:kStructureX5} below.

  The explicit $\k$-structures are described in the following subsections in Lemma~\ref{lem:kStructureX8} ($X_8$), Lemma~\ref{lem:kStructureX6} ($X_6$), and Lemma~\ref{lem:kStructureX5} ($X_5$).
\end{proof}

\subsection{Del Pezzo surface of degree $8$}

\begin{lemma}[{\cite[Lemma~3.2]{Schneider-Zimmermann}}]\label{lem:kStructureX8}
  Let $X_8\in\Dl_8$ with splitting field $L/\k$.
  Then $X_8$ has a $\k$-structure $(\p^1\times\p^1,\Gamma)$ over $L$, where $\Gamma$ is generated by
  \[(x,y)\mapsto(y^g,x^g)\]
  and $g\in\Gal(L/\k)\simeq\Z/2\Z$ is the generator of the Galois group.
\end{lemma}

\begin{lemma}[{\cite[Lemma~3.5]{Schneider-Zimmermann}}]\label{lem:X8auto}
  The automorphisms of the $\k$-structure of $X_8\in\Dl_8$ on $\p^1\times\p^1$ over $L$ are of the form $(x,y)\mapsto (Ax,A^gy)$ or $(x,y)\mapsto(Ay,A^gx)$ for $A\in\PGL_2(L)$, where $g\in\Gal(L/\k)$ is its generator.
\end{lemma}

\begin{proof}
  The group $\Aut_L(\p^1\times\p^1)$ is generated by $(x,y)\mapsto(y,x)$ (which is $\Gamma$-invariant), and maps of the form $(x,y)\mapsto (Ax, By)$ for $A,B\in\PGL_2(L)$.
  Let $g\in\Gal(L/\k)\simeq\Z/2$ be the generator.
  Note that $(y^g,x^g)\circ (Ax,By)= (B^gy^g,A^gx^g)$ if and only if $B=A^g$.
\end{proof}

\subsection{Del Pezzo surface of degree $6$}

\begin{lemma}\label{lem:kStructureX6}
  Let $X_6\in\Dl_6$ with splitting field $F/\k$ be coming from $X_8\in\Dl_8$ with splitting field $L/\k$.
  Assume that $\Gal(F/\k)\simeq\Z/3\Z$ and $\Gal(LF/\k)\simeq \Z/6\Z$.
  Then $X_6$ is a $\k$-structure on $\p^2$ over $LF$ with base points $[1:0:0]$, $[0:1:0]$, $[0:0:1]$ generated by
  \[[x:y:z]\mapsto [xz:xy:yz]^g\]
  where $g\in\Gal(LF/\k)$ generates $\Gal(LF/\k)\simeq \Gal(L/\k)\times\Gal(F/\k)$.
\end{lemma}

\begin{proof}
  Let $\chi\colon X_8\link31 X_6$ be a link with geometric base points $p_1,p_2,p_3\in X_8(F)$, ordered in such a way that $g(p_{i\mod 3})=p_{i+1\mod 3}$ for $i=1,2,3$.
  Let $\rho_{X_8}\colon(X_8)_L\simto \p^1\times\p^1_L$ be the isomorphism realising the $\k$-structure of $X_8$ from Lemma~\ref{lem:kStructureX8}.
  Hence, $(\rho_{X_8})_{LF}(p_i)=([1:a_i],[1:b_i])\in(\p^1\times\p^1)(LF)$ form a $\Gamma_{X_8}=\langle (x,y)\mapsto (y^g,x^g) \rangle$-orbit of size $3$. In particular, $g$ acts as \[g\colon a_1\mapsto b_2\mapsto a_3\mapsto b_1\mapsto a_2\mapsto b_3.\]
  Consider now $\beta\in\Aut_L(\p^1\times\p^1)$ that sends $p_1 \mapsto ([1:0],[1:0])$, $p_2 \mapsto ([0:1],[0:1])$, $p_3 \mapsto ([1:1],[1:1])$. Explicitly, $\beta^{-1}=(Mx,M'y)$, where
  \begin{align*}
    M &= \smallmat{a_3-a_2 & a_1-a_3\\ a_1(a_3-a_2) & a_2(a_1-a_3)}, & M^{-1} &= \smallmat{\frac{a_2}{a_2-a_3} & \frac{-1}{a_2-a_3}\\ \frac{a_1}{a_1-a_3} & \frac{-1}{a_1-a_3}},
  \end{align*}
  and $M'$ the same but $a_i$ replaced with $b_i$, for $i=1,2,3$.
  Writing $\varphi_{X_8}\colon (x,y)\mapsto(y,x)$, one can compute $\widehat{\varphi_{X_8}}=g^{-1}(\beta)\circ\varphi_{X_8}\circ\beta^{-1}$:\begin{align*}
    g^{-1}(\beta)\circ\varphi_{X_8}\circ\beta^{-1}(x,y) &=
    g^{-1}(\beta)(M'y,Mx)\\
    &= (g^{-1}(M^{-1})M'y, g^{-1}(M'^{-1})Mx)\\
    &=\left(\smallmat{0 & -1\\ 1 & -1}y, \smallmat{0 & -1\\ 1 & -1}x\right).
  \end{align*}
  A birational map $\eta\colon\p^1\times\p^1\link34\p^2$ as in Lemma~\ref{lemma:DescriptionOfBertiniEtc} with base points $([1:0],[1:0])$, $([0:1],[0:1])$, $([1:1],[1:1])$ can be written as
  \begin{align*}
    \eta\colon ([x_0:x_1],[y_0:y_1]) & \mapsto  [(x_0-x_1)y_0y_1:x_0y_1(y_0-y_1):x_1y_0(y_0-y_1)],\\
    \eta^{-1}\colon [x:y:z] &\mapsto ([y(x-z):z(x-y)],[x-z:x-y]).
  \end{align*}
  Finally, one can compute that $\eta\circ\widehat{\varphi_{X_8}}\circ\eta^{-1}$ is given by $[zx:yx:zy]$.
\end{proof}

\begin{remark}\label{rem:AutoOfDelPezzo6OverKbar}
  Moreover, $X_6$ is a $\k$-structure on \[S=\{([x:y:z],[u:v:w])\in\p^2\times\p^2 \mid xu=yv=zw\}\]
  over $LF$ without base points, generated by \[([x:y:z],[u:v:w])\mapsto ([v:w:u],[y:z:x])^g,\] where $g\in\Gal(LF/\k)\simeq\Z/6\Z$ is a $6$-cycle.
  The automorphisms of $S$ over a field $K$ are well known: It holds that $\Aut_K(S) = (K^*)^2\rtimes (\Sym_3\times\Z/2\Z)$, where $\Sym_3$ is the group of permutations of $x,y,z$, such as
  \[
    ([x:y:z],[u:v:w])\mapsto ([y:z:x],[v:w:u]),
  \]
  and $\Z/2\Z$ is generated by the involution
  \[
    ([x:y:z],[u:v:w])\mapsto([u:v:w],[x:y:z]),
  \]
  and $(a,b)\in(K^*)^2$ denotes the toric map
  \begin{align*}
    t_{(a,b)}\from ([x:y:z],[u:v:w])\mapsto ([x:ay:bz],[abu:bv:aw]).
  \end{align*}
\end{remark}

\begin{lemma}\label{lem:X6auto}
  Let $\k=\F_q$ be a finite field, and denote by $g\in\Gal(\F_{q^6}/\F_q)$ the Frobenius morphism.
  Let $X_6\in\Dl_6$ be a del Pezzo surface of degree $6$ over $\F_q$, given as a $\k$-structure on $\p^2$ over $\F_{q^6}$ generated by $g\circ \varphi_g$, where $\varphi_g\colon [x:y:z]\mapsto[xz:xy:yz]$.

  Then each automorphism of the $\k$-structure on $\p^2$ is of the form \[t_{(a,b)}\circ \sigma^k\circ\iota^i,\]
  where \begin{enumerate}
    \item $\sigma\colon[x:y:z]\mapsto [y:z:x]$ is a permutation of order $3$ and $k\in\{0,1,2\}$,
    \item $\iota\colon[x:y:z]\mapsto [yz:xz:xy]$ is the standard quadratic involution and $i\in\{0,1\}$,
    \item $t_{(a,b)}\colon [x:y:z]\mapsto [x:ay:bz]$ is a toric map with $a\in \F_{q^6}^*$ satisfying $a^{q^2-q+1}=1$ and $b=a^q$.
  \end{enumerate}
\end{lemma}

\begin{proof}
  Write $L=\F_{q^2}$ and $F=\F_{q^3}$, and so $LF=\F_{q^6}$.
  We want to determine the subgroup of $\Aut_{LF}(S)$ given by
  \[\{f\in (({LF})^*)^2 \rtimes (\Sym_3\times\Z/2\Z) \mid f\circ g \circ \varphi_g =g\circ\varphi_g\circ f\}.\]
  It is enough to consider the projection onto the first coordinates.
  Note that
  \[
      t_{(a,b)}^{-1}=t_{(a^{-1},b^{-1})}=\iota\comp t_{(a,b)}\comp\iota,
  \]
  that $s\circ\iota=\iota\circ s$ and $s\circ g= g\circ s$ for all $s\in\Sym_3$, and that $g\circ\iota=\iota\circ g$. Observe that $\varphi_g=\sigma\circ\iota$.

  We write $f=t\circ s \circ \iota^i\in\Bir_{LF}(\p^2)$, where $t=t_{a,b}$ is a toric map with $a,b\in {LF}^*$, $s\in\Sym_3$ a permutation, and $i\in\{0,1\}$.
  We compute
  \begin{align*}
    f\circ g\circ\varphi_g & = (t\circ s\circ\iota^i)\circ (\iota \sigma\circ g) \\
    & = (t\circ  \iota\circ s) \circ  (\iota^i\circ g\circ \sigma) \\
    &= (\iota\circ t^{-1}\circ s) \circ (g\circ\sigma\circ \iota^i),
  \end{align*}
  and
  \begin{align*}
    g\circ \varphi_g\circ f & = (\iota\circ\sigma\circ g) \circ (t\circ s \circ \iota^{i}).
  \end{align*}
  Therefore, if $f$ commutes with $g\circ\varphi_g$, then by multiplying with $\iota$ from the left and $\iota^{i}$ from the right we obtain the equation
  \begin{align*}
    t^{-1}\circ s \circ g\circ \sigma  & = \sigma\circ g \circ t\circ s.
  \end{align*}
  Note  that $g$, $t$, and $t^{-1}$ all fix $x=0$, $y=0$, and $z=0$. This implies in particular that $\sigma$ and $s$ commute. Hence, $s$ belongs to the subgroup of order $3$ in $\Sym_3$, which is generated by $\sigma$. That is, $s=\sigma^k$ for $k\in\{0,1,2\}$.

  We use this to find conditions on $t=t_{(a,b)}$ for $a,b\in LF^*$. Replacing $s=\sigma^k$ above, we find
  \begin{align*}
    t^{-1}\circ g\circ \sigma^{k+1} &= \sigma\circ g\circ t\circ \sigma^k.
  \end{align*}
  Multiplying by $\sigma^{-k}$ on both sides, the above equation gives \begin{align*}
    t^{-1}\circ g \circ \sigma &= \sigma\circ g\circ t.
  \end{align*}
  Hence, $t_{(a^{-1},b^{-1})}\circ \sigma=\sigma\circ t_{g(a),g(b)}$ and so $[1:a^{-1}:b^{-1}]=[g(a):g(b):1]$, which gives $b=g(a)$, and $\frac{ag^2(a)}{g(a)})=1$.
   Since $g\in\Gal(\F_{q^6}/\F_q)$ is the Frobenius map, we obtain the conditions $b=a^q$ and $1=a^{q^2-q+1}$.
  Hence, $(a,b)=(a,a^q)$ for $a\in\F_{q^6}$. Conversely, any such $f$ commutes with $g\circ\varphi_g$.
\end{proof}

More generally, surfaces in $\Dl_6$ are in the situation of \cite[Lemma~4.6 and Remark~B.3]{Schneider-Zimmermann}.

\subsection{Del Pezzo surface of degree $5$}

For finite fields, surfaces in $\Dl_5$ correspond to \cite[Proposition~3.28(1)]{boitrel-delPezzo}.

\begin{lemma}\label{lem:kStructureX5}
  Let $X_5\in\Dl_5$ with splitting field $L/\k$.
  Assume that $\Gal(L/\k)\simeq\Z/5\Z$.
  Then $X_5$ has a $\k$-structure $(\p^2,\Gamma)$ over $L$ with base points $[1:0:0], [0:1:0], [0:0:1], [1:1:1]$, where $\Gamma$ is generated by
    \[[x:y:z]\mapsto [xy:y(x-z):x(y-z)]^g \]
  and $g\in\Gal(L/\k)$ denotes a $5$-cycle.

  In particular, there exists a unique surface in $\Dl_5$ with splitting field $L/\k$.
\end{lemma}

\begin{proof}
  Let $p\in\p^2$ be the point of degree $5$ such that $X_5$ is obtained by the blow-up $Z\to\p^2$ of $p$ followed by the contraction $Z\to X_5$ of the strict transform of the conic $C$ going through $p$.
  One can order $p=\{p_1,\ldots,p_5\}\subset\p^2$ in such a way that $g$ acts on $p$ as $(12345)$.
  Write $L_{ij}\subset X_5$ for the strict transform of the lines through $p_i$ and $p_j$, and write $E_i\subset X_5$ for the strict transform of the exceptional divisor of $p_i$.
  Let $\rho\colon (X_5)_L\to\p^2_L$ denote the contraction of the four $(-1)$-curves $L_{i5}$ for $i=1,2,3,4$. After composing with an element of $\Aut_{L}(\p^2)$ one can assume that $L_{15},L_{25},L_{35},L_{45}$ are contracted onto the points $p_{15}=[1:0:0], p_{25}=[0:1:0], p_{35}=[0:0:1], p_{45}=[1:1:1]$, respectively.
  We write again $L_{ij}$ for their images in $\p^2$.
  Note that $C$ is a point in $\p^2$, $E_5$ is a conic, and $L_{ij}$ is the line through $p_{k5}$ and $p_{l5}$ for $\{i,j,k,l\}=\{1,2,3,4\}$. See Figure~\ref{figure:X5} for the relevant lines and points.

  Observe the action of $g$ on $X_5$:
  \begin{align*}
  L_{12} \mapsto L_{23}\mapsto L_{34}\mapsto L_{45}\mapsto L_{15}\mapsto L_{12}, \\
  L_{13}\mapsto L_{24} \mapsto L_{35} \mapsto L_{14} \mapsto L_{25} \mapsto L_{13}.
  \end{align*}
Therefore, $\varphi_g=g^{-1}(\rho)\circ\rho^{-1}$ is a quadratic transformation not defined at
$p_{15}, p_{25}, p_{35}$ and contracting the lines
  \begin{align*}
    L_{14}=(x=0) &\mapsto p_{25}=[0:1:0],\\
    L_{24}=(y=0) &\mapsto p_{35}=[0:0:1],\\
    L_{34}=(z=0) &\mapsto p_{45}=[1:1:1],
  \end{align*}
  and sending $p_{45}$ onto $p_{15}$. Therefore, $\varphi_g([x:y:z])=[xy:y(x-z):x(y-z)]$.

  The last part of the statement follows with Remark~\ref{rem:UniqueKStructure}.
\end{proof}

\begin{figure}
  \caption{The contraction of {${L_{45},L_{35},L_{25},L_{15}\subset X_5}$ from $X_5$ to $\p^2$ over $L$}}
  \label{figure:X5}
  \begin{center}
    \begin{tikzpicture}[scale=0.9]
    \node [label={[label distance=0.1cm]-90:$p_{15}$}] (1) at (-2, 2) {$\bullet$};
    \node [label={[label distance=0.1cm]90:$p_{45}$}] (4) at (-2, -2) {$\bullet$};
    \node [label={[label distance=0.1cm]3:$p_{35}$}] (3) at (2, -2) {$\bullet$};
    \node [label={[label distance=0.1cm]357:$p_{25}$}] (2) at (2, 2) {$\bullet$};
    \draw [shorten >=-0.5cm, shorten <= -0.5cm] (1.center) to node[below]{$L_{34}$} (2.center);
    \draw [shorten >=-0.5cm, shorten <= -0.5cm] (1.center) to node[above left]{} (3.center);
    \node[label={[label distance=0.5cm]-65:$L_{24}$}]at (1.south){};
    \draw [shorten >=-0.5cm, shorten <= -0.5cm] (2.center) to node[right]{$L_{14}$} (3.center);
    \draw [shorten >=-0.5cm, shorten <= -0.5cm] (2.center) to node[below left]{} (4.center);
    \node [label={[label distance=0.5cm]65:$L_{13}$}] at (4.north) {};
    \draw [shorten >=-0.5cm, shorten <= -0.5cm] (3.center) to node[above]{$L_{12}$} (4.center);
    \end{tikzpicture}
  \end{center}
\end{figure}

\begin{lemma}\label{lem:X5auto}
  Let $\k=\F_q$ be a finite field. Let $X_5\in\Dl_5$ and let $(\p^2,\Gamma)$ be its $\k$-structure over $L=\F_{q^5}$ from Lemma~\ref{lem:kStructureX5}.
  The group of automorphisms of $(\p^2,\Gamma)$ is the cyclic group of order $5$ generated by
  \[[x:y:z]\mapsto [xy:y(x-z):x(y-z)].\]
\end{lemma}

\begin{proof}(See also \cite[Proposition~3.28(1)]{boitrel-delPezzo})
  It is well known that the automorphism group $\Aut_L(X_5)$ is isomorphic to $\Sym_5$:
  Taking the same notation as in the proof of Lemma~\ref{lem:kStructureX5}, the automorphisms in $\Aut_L(\p^2)$ that fix $\{p_{15},\ldots,p_{45}\}$ lift to automorphisms of $X_5$ fixing $E_5$, and the standard quadratic map $[x:y:z]\mapsto[yz:xz:xy]$ lifts to an automorphism of $X_5$ corresponding to the permutation $(15)$. See \cite[Proposition 6.3.7]{blanc06} for details.

  Hence, the automorphisms respecting the $\k$-structure are exactly those that commute with the $5$-cycle, which concludes the proof.
\end{proof}

\section{Points in general position on del Pezzo surfaces}\label{sec:PointsGeneralPosition}

\subsection{Points on $\p^2$}
\begin{lemma}\label{lem:P2pointdeg34}
  Let $\k=\F_q$ be a finite field and $d\in\{3,4\}$. Let $a_1,\ldots,a_d\in\F_{q^d}$ a $\Gal(\F_{q^d}/\F_q)$-orbit of size $4$ and consider the point $r$ of degree $d$ given by components $r_i=[1:a_i:a_i^2]\in\p^2(\F_{q^d})$. Then for any point $p\in\p^2$ of degree $d$ in general position, there exists $\alpha\in\Aut_\k(\p^2)$ such that $\alpha(p)=q$.
\end{lemma}
\begin{proof}
  This is \cite[Lemma~6.11]{Schneider-relations}, using that for finite fields all points of degree $d$ have residue field $\F_{q^d}$.
\end{proof}

\begin{lemma}\label{lem:P2pointdeg5}
	Let $\k=\F_2$. There is a unique Galois orbit of size $5$ in general position on $\p^2$, up to $\PGL_3(\F_2)$.
\end{lemma}
\begin{proof}
	Let $\PPPP$ be a point of degree $5$ in general position on $\p^2$, so there is a irreducible conic $C$ passing through $\PPPP$.
	Since $\PGL_3(\F_2)$ acts transitively on all irreducible conics and since $\Aut_{\F_2}(C)\simeq\Aut_{\F_2}(\p^1)$, it is enough to observe that $\PGL_2(\F_2)$ acts transitively on the Galois orbits $[a_i:1]$ of size $5$ in $\p^2_{\F_2}$, where $a_i$ are the roots of an irreducible polynomial of degree $5$.
	Indeed, there are exactly six irreducible polynomials in $\F_2[x]$ of degree $5$, namely
  \begin{align*}
    f_1&=x^5+x^4+x^3+x^2+1,  &f_2&=x^5+x^3+x^2+x+1,  &f_3&=x^5+x^4+x^3+x+1, \\ f_4&=x^5+x^4+x^2+x+1, & f_5&=x^5+x^3+1, &f_6&=x^5+x^2+1.
  \end{align*}
  Note that $x\mapsto {1}/{x}$ exchanges the roots of $f_1$ and $f_2$, $f_3$ and $f_4$, and $f_5$ and $f_6$.
  The automorphism $x\mapsto x+1$ exchanges $f_5$ with $f_1$, and $f_6$ with $f_3$.
\end{proof}

\begin{lemma}\label{lem:P2pointdeg68}
  Let $\k=\F_q$ be a finite field and $d\in\{6,8\}$. Let $\lambda\in\F_{q^{d/2}}$ such that it forms a $\Gal(\bk/\k)$-orbit of size $\frac{d}{2}$.
  Let $p\in\p^2_{\k}$ be a point of degree $d$ such that no three of the components $p_1,\ldots,p_d\in\p^2(\F_{q^d})$ are collinear.
  Then there exists $\alpha\in\Aut_\k(\p^2)$ such that after reordering the $p_i$ either
  \begin{enumerate}
    \item\label{it:P2pointdeg68--1} $\alpha(p_1)=[a:\lambda:1]$, or
    \item\label{it:P2pointdeg68--2} $\alpha(p_1)=[a:\lambda^2+\lambda a:1]$,
  \end{enumerate}
  where $a\in\F_{q^d}$ forms a $\Gal(\bk/\k)$-orbit of size $d$.
\end{lemma}
\begin{proof}
  Let $g\in\Gal(\F_{q^d}/\F_q)$ be a $d$-cycle, and order the $p_i$ such that $g$ acts as the permutation $(1\cdots d)$ on $p_1,\ldots,p_d$.
  Write $L_{ij}$ for the line between $p_i$ and $p_j$ in $\p^2_{\F_{q^d}}$.
  Since $d$ is even, the line $L=L_{1,1+\frac{d}{2}}$ forms a $\Gal(\bk/\k)$-orbit of lines of size $\frac{d}{2}$.
  There are two possibilities: Either the line $L$ (and hence all lines of the orbit) contains an $\F_q$-point, or none of them does.

  In the first case, by applying an element of $\Aut_\k(\p^2)$ we can assume that the $\F_q$-point is $[1:0:0]$, and hence $L$ is given by $y-\mu z=0$ where $\mu\in \F_{q^{d/2}}$ forms a $\Gal(\bk/\k)$-orbit of size $\frac{d}{2}\in\{2,3\}$. Note that there exists an element of $\Aut_\k(\p^1)$ that sends $[1:\mu]$ onto $[1:\lambda]$ (version of \cite[Lemma~6.11]{Schneider-relations} for $\p^1$).
  As the dual point of $L$ is $[0:1:\mu]$, there exists therefore an element of $\Aut_\k(\p^2)$ that sends $[0:1:\mu]$ onto $[0:1:\lambda]$. Therefore, an element of $\Aut_\k(\p^2)$ sends $L$ onto $y-\lambda z=0$, giving~\ref{it:P2pointdeg68--1}.

  In the second case, the line $L$ does not contain any $\F_q$-point. Hence, the dual point of $L$ forms an orbit of points of size $d\in\{3,4\}$ such that no three of its components are collinear. Lemma~\ref{lem:P2pointdeg34} implies that after composing with an element of $\Aut_\k(\p^2)$, the dual point of $L$ can be chosen to be $[\lambda:-1:\lambda^2]$. So the line $L$ is given by $y=\lambda x+\lambda^2 z$, giving~\ref{it:P2pointdeg68--2}.
\end{proof}

\begin{algorithm}\label{alg:P2points}
  Let $\k=\F_q$.
  The points of degree $d\in\{3,6,7,8\}$ on $\p^2$, up to $\Aut_\k(\p^2)\simeq\PGL_3(\k)$, can be determined using the following algorithm:
  \begin{description}
    \item [Step 0] List the elements of $\F_{q^d}^*$ that form a $\Gal(\F_{q^d}/\F_q)$-orbit of size $d$. Choose a set $S$ that contains exactly one element per orbit.
    \item [Step 1] List all $[a:b:1]$ with $a\in S$ and $b\in \F_{q^d}^*$. If $d\in\{6,8\}$, it is enough to consider $b\in\{\lambda_i,\lambda_i^2+\lambda_i a\}$ for a fixed orbit $\lambda_1,\ldots,\lambda_{d/2}$ of size $d/2$. Take only those points such that no three of its components are collinear.
    \item [Step 2] Take only one orbit up to the action of $\Aut_\k(\p^2)\simeq\PGL_3(\k)$.
    \item [Step 3] Check that the points are in general position.
  \end{description}
\end{algorithm}

\begin{lemma}\label{lem:F2pointsP2}
  Let $\k=\F_2$.
  Any point of degree $d\in\{3, 6,7,8\}$ on $\p^2$ is one of the following, up to $\Aut_\k(\p^2)$:
  \begin{enumerate}
    \item If $d=3$, there is a multiplicative generator $a\in\F_{q^3}$ such that $p=[a:a^2:1]$.
    \item If $d=6$, there is a multiplicative generator $a\in{\F_{2^6}}^*$ such that
    \[p=[a^k:a^{9}:1], \, k\in\{2,12\}.\]
    \item If $d=7$, there is a multiplicative generator $a\in{\F_{2^7}}^*$ such that
    \[p=[a:a^k:1], \, k\in\{5, 9, 10, 11, 17,18, 22, 24, 26, 39\}.\]
    \item If $d=8$, ther is a multiplicative generator $a\in{\F_{2^8}}^*$ such that
    \[p = [a^k :a^{17} :1]\]
    and $k\in\{1, 3, 5, 9, 10, 11, 13, 22, 26, 39, 47, 58\}$, or
    \[p=[a^k :a^{2\cdot 17}+a^{17k} :1]\]
    and $k\in\{1, 5, 6, 7, 9, 10, 11, 13, 14, 15, 18, 19, 21, 22, 23, 25, 26, 27, 35, 38, 41, 42, 43, 45, 46, 54\}$.
  \end{enumerate}
\end{lemma}

\begin{proof}
  We have implemented Algorithm~\ref{alg:P2points} for $\k=\F_2$. We have $|\Aut_\k(\p^2)|=168$. In each step, the obtained lists have the following sizes:
  \begin{center}
    \begin{tabular}{M || M | M | M | M } 
      d & \text{Step }0 & \text{Step }1 & \text{Step }2& \text{Step }3 \\
      \hline
      3 & 2 & 8 & 1 & 1 \\
      6 & 9 & 43 & 4 & 2 \\
      7 & 18 & 2184 & 13 & 10 \\
      8 & 30 & 236 & 47 & 38. \\
    \end{tabular}
  \end{center}
  Note that the case $d=3$ was also dealt with in Lemma~\ref{lem:P2pointdeg34}.
\end{proof}

\begin{remark}
  For $\k=\F_3$ on $\p^2$, we have $|\Aut_\k(\p^2)|=5616$ and find the following sizes in our algorithm. (For $d\in\{7,8\}$, the author's computer cannot finish the computations.)
  \begin{center}
    \begin{tabular}{M || M | M | M | M } 
      d & \text{Step }0 & \text{Step }1 & \text{Step }2& \text{Step }3 \\
      \hline
      3 & 8 & 144 & 1 & 1 \\
      6 & 116 & 627 & 18 & 11 \\
      7 & 312 & ? & ? & ? \\
      8 & 810 & 6462 & ? & ?. \\
    \end{tabular}
  \end{center}
\end{remark}

\subsection{Points on $X_8$}

\begin{lemma}\label{lem:X8points}
  Let $X_8\in\Dl_8$ be a del Pezzo surface of degree $8$ over a finite field $\k=\F_q$, and consider its $\k$-structure $(\p^1\times\p^1,\Gamma)$ over $L=\F_{q^2}$ from Lemma~\ref{lem:kStructureX8}.
  Let $p\in X_8$ be a point of degree $d\geq 3$ in general position and set $n=\lcm(2,d)\in\{d,2d\}$.
  Then, $p$ corresponds on $\p^1\times\p^1$ to a $\Gamma$-orbit of $([a:1],[b:1])\in\p^1\times\p^1$ with $a,b\in\F_{q^n}^*$. Moreover, if $d$ is odd, then $b=a^{q^d}$.
\end{lemma}
\begin{proof}
  Let $g$ be the Frobenius map.
  If $d$ is even, then for $(x,y)\in\p^1\times\p^1$
  \[(g\circ\varphi_g)^d(x,y) =(g^d(x), g^d(y)),\]
  and if $d$ is odd, then
  \[(g\circ\varphi_g)^d(x,y) =(g^d(y),g^d(x)).\]
  In particular, if the $\Gamma$-orbit of $([1:0],[y_0:y_1])$ is of size $\geq 3$ then it is not in general position since it contains two components on $[1:0]\times\p^1$.
  Hence, $p$ corresponds to the $\Gamma$-orbit of size $d$ of a point $([a:1],[b:1])$ with $a,b\in\F_{q^n}^*$.
  Moreover, if $d$ is odd, then $b=g^d(a)=a^{q^d}$.
\end{proof}

\begin{algorithm}\label{alg:X8points}
  Let $\k=\F_q$ and $X_8\in\Dl_8$ with $\k$-structure $(\p^1\times\p^1,\Gamma)$ from Lemma~\ref{lem:kStructureX8}.
  The points of degree $d\in\{4,6,7\}$ on $X_8$, up to $\Aut_\k(X_8)$, can be determined as points on $(\p^1\times\p^1,\Gamma)$ using the following algorithm, where $n=\lcm(2,d)$:
  \begin{description}
    \item [Step 0] List all elements in $\F_{q^n}$ that form a $\Gal(\F_{q^n}/\F_q)$-orbit of size $n=\lcm(2,d)$, and choose a set $S$ that consists of exactly one element per orbit.
    \item[Step 1] By Lemma~\ref{lem:X8points}, it is enough to consider points of the form $([x:1],[y:1])$ for $x,y\in\F_{q^n}^*$. Moreover, if $d$ is odd, then $y=x^{q^d}$. Up to applying the involution of $\p^1\times\p^1$ that exchanges the two rulings (which corresponds to an element of $\Aut_\k(X_8)$ by Lemma~\ref{lem:X8auto}), one can assume that $x\in S$.
    Create a list of such $\Gamma$-orbits.
    \item [Step 2] Take one $\Gamma$-orbit up to the action of $\Aut_\k(X_8)$, seen on $(\p^1\times\p^1,\Gamma)$ via Lemma~\ref{lem:X8auto}.
    \item [Step 3] Check which of the $\Gamma$-orbits are in general position, using Lemma~\ref{lemma:MatrixForComputation}.
  \end{description}
\end{algorithm}

\begin{lemma}\label{lem:F2pointsX8}
  Let $\k=\F_2$ and $X_8\in\Dl_8$ with $\k$-structure $(\p^1\times\p^1,\Gamma)$ from Lemma~\ref{lem:kStructureX8}.
  Any point of degree $d\in\{4,6,7\}$ on $X_8$, up to $\Aut_\k(X_8)$, corresponds to one of the following points $p$ on $(\p^1\times\p^1,\Gamma)$:
  \begin{enumerate}
    \item There is no point of degree $4$.
    \item If $d=6$, there is a multiplicative generator $a\in\F_{q^6}^*$ such that
    \[p=([a:1], [a^k:1]), \, k\in\{3,5,6,7,13\}.\]
    \item If $d=7$, there is a multiplicative generator $a\in\F_{q^{14}}^*$ such that
    \[p=([a^k:1],[a^{128k}:1]),\]
    where $k\in\{1, 3, 5, 7, 9, 11, 13, 15, 17, 21, 25, 29, 33, 37, 47, 61, 87, 133\}$.
  \end{enumerate}
\end{lemma}

\begin{proof}
 We have implemented Algorithm~\ref{alg:X8points} for $\k=\F_2$. We have $|\Aut_{\F_2}(X_8)|=$ In each step, the obtained lists have the following sizes:
 \begin{center}
   \begin{tabular}{M || M | M | M | M } 
     d & \text{Step }0 & \text{Step }1 & \text{Step }2& \text{Step }3 \\
     \hline
     4 & 3 & 36 & 2 & 0 \\
     6 & 9 & 477 & 8 & 5 \\
     7 & 1161 & 1161 & 21 & 18. \\
   \end{tabular}
 \end{center}
\end{proof}

\begin{remark}
  For $\k=\F_3$, we have $|\Aut_{\F_3}(X_8)|=1440$ and we find the following sizes in our algorithm.
  \begin{center}
    \begin{tabular}{M || M | M | M | M } 
      d & \text{Step }0 & \text{Step }1 & \text{Step }2& \text{Step }3 \\
      \hline
      4 & 18 & 1296 & 4 & 1 \\
      6 & 116 & 80620 & 73 & 63 \\
      7 & ? & ? & ? & ?. \\
    \end{tabular}
  \end{center}
\end{remark}

\subsection{Points on $X_5$}
\begin{lemma}\label{lem:X5points}
  Let $X_5\in\Dl_5$ over a finite field $\k=\F_q$, and consider its $\k$-structure $(\p^2,\Gamma)$ over $L=\F_{q^5}$ as in Lemma~\ref{lem:kStructureX5}.
  Let $p\in X_5$ be a point of degree $d\in\{3,4\}$ in general position and set $n=\lcm(5,d)$.
  Then, $p$ corresponds on $\p^2$ to a $\Gamma$-orbit of $[1:a:b]$ with $a,b\in\F_{q^n}^*$ such that the following hold:
  \begin{enumerate}
    \item If $d=3$, then $a\in \F_{q^{15}}$ is a root of $x^{q^{6}+q^3+1} -x^{q^{6}} -x +1\in\F_{q}[x]$, and $b=a^{q^3+1}$.
    \item If $d=4$, then $a\in\F_{q^{20}}$ is a root of $x^{q^{8}+1}+x^{q^4}-1\in\F_{q}[x]$ and $b=1-a^{q^4}$.
  \end{enumerate}
\end{lemma}

\begin{proof}
  Write $p_1,\ldots,p_d\in\p^2(\F_{q^n})$ for the points in $\p^2$ corresponding to the components of $p$. Since $p$ is in general position, none of the $p_i$ is collinear with two of the four coordinate points. So we can write $p_1=[1:a:b]$ with $a,b\in\F_{q^n}\setminus\{0,1\}$.
  AS the Frobenius map $g$ and $\varphi_g$ commute, $(g\circ\varphi_g)^d=\varphi_g^d\circ g^d$.
  We write the first iterations of $\varphi_g$:
  \begin{align*}
    \varphi_g([x:y:z]) &= [xy:y(x-z):x(y-z)],\\
    \varphi_g^2([x:y:z]) &= [y(x-z):z(x-z):z(x-y)],\\
    \varphi_g^3([x:y:z]) &= [y(x-z):x(y-z):y(y-z)],\\
    \varphi_g^4([x:y:z]) &= [x(x-z):x(x-y):(x-z)(x-y)],
  \end{align*}
  and observe that $\varphi_g^5=\id$.
  Hence, if $d=4$ we find that the equality $[1:a:b]= \varphi_g^4\circ g^d([1:a:b])$ gives
  \[[1:a:b] = [1:\frac{1-a^{q^d}}{1-b^{q^d}}:1-a^{q^d}],\]
  hence $b=1-a^{q^d}$ and $a=\frac{1-a^{q^d}}{1-(1-a^{q^d})^{q^d}}=\frac{1-a^{q^d}}{a^{q^{2d}}}$, which implies that $a\in\F_{q^{20}}$ is a root of $x^{q^{2d}+1}+x^{q^d}-1$.

  If $d=3$, we get \[[1:a:b] = [1:\frac{(a-b)^{q^d}}{a^{q^d}(1-b^{q^d})}:\frac{(a-b)^{q^d}}{1-b^{q^d}}].\]
  This gives two equations
  \begin{align*}
    a^{q^d+1}(1-b^{q^d}) &=(a-b)^{q^d},\\
    b(1-b^{q^d}) &= (a-b)^{q^d}.
  \end{align*}
  As $b\neq 1$, we have $b^{q^d}\neq 1$ and so this gives $b=a^{q^d+1}$ and $a^{q^d+1}(1-(a^{q^d+1})^{q^d})=(a-a^{q^d+1})^{q^d}$. Hence
  \[a^{q^{2d}+q^d+1} -a^{q^{2d}} -a +1 =0.\]
\end{proof}

\begin{algorithm}\label{alg:X5points}
  Let $\k=\F_q$ and $X_5\in\Dl_5$ with $\k$-structure $(\p^2,\Gamma)$ from Lemma~\ref{lem:kStructureX5}.
  The points of degree $d\in\{3,4\}$ on $X_5$, up to $\Aut_\k(X_5)$, can be determined as points on $(\p^2,\Gamma)$ using the following algorithm, where $n=\lcm(5,d)$:
  \begin{description}
    \item [Step 0] Determine the elements $a,b\in \F_{q^n}$ that satisfy the conditions of Lemma~\ref{lem:X5points} and check which of the points $[1:a:b]$ give a $\Gamma$-orbit of size $d$.
    \item [Step 1] Take only one $\Gamma$-orbit up to the action of $\Aut_\k(X_5)$ via Lemma~\ref{lem:X5auto}.
    \item [Step 2] Check which of the $\Gamma$-orbits are in general position, using Lemma~\ref{lemma:MatrixForComputation}.
  \end{description}
\end{algorithm}

\begin{lemma}\label{lem:F2pointsX5}
  Let $\k=\F_2$ and $X_5\in\Dl_5$ with $\k$-structure $(\p^2,\Gamma)$ from Lemma~\ref{lem:kStructureX5}.
  Any point of degree $d\in\{3,4\}$ on $X_5$, up to $\Aut_\k(X_5)$, corresponds to one of the following points $p$ on $(\p^2,\Gamma)$:
  \begin{enumerate}
    \item If $d=3$, there is a multiplicative generator $a\in\F_{2^{15}}^*$ such that
    \[p=[1:a^k:a^{9k}], \, k\in\{281, 551, 635, 2867\}.\]
    \item If $d=4$, there is a multiplicative generator $a\in\F_{2^{20}}^*$ such that
    \[p=[1:a^k:1-a^{16k}],\]
    where $k\in\{121, 10293, 17789, 18725, 40151, 40331, 43157, 50865, 77161, 169277, 211821, 216373\}$.
  \end{enumerate}
\end{lemma}

\begin{proof}
 We have implemented Algorithm~\ref{alg:X5points} for $\k=\F_2$. In each step, the obtained lists have the following sizes:
 \begin{center}
   \begin{tabular}{M || M | M | M } 
     d & \text{Step }0 & \text{Step }1 & \text{Step }2 \\
     \hline
     3 & 60 & 4 & 4  \\
     4 & 240 & 12 & 12. \\
   \end{tabular}
 \end{center}
\end{proof}

\begin{remark}
  For $\k=\F_3$ on $X_5$, we find the following sizes:
  \begin{center}
    \begin{tabular}{M || M | M | M | M } 
      d & \text{Step }0 & \text{Step }1 & \text{Step }2 \\
      \hline
      3 & 720 & 48&  48 \\
      4 & 6480 & 324 & 324. \\
    \end{tabular}
  \end{center}
\end{remark}

\subsection{Points on $X_6$}

\begin{lemma}\label{lem:X6points}
  Let $X_6\in\Dl_6$ a surface over a finite field $\k=\F_q$, with $\k$-structure on $\p^2$ as in Lemma~\ref{lem:kStructureX6}. Let $d\in\{2,3,4,5\}$ and set $n=\lcm(6,d)$.
  Let $p\in X_6$ be a point of degree $d$ in general position.
  Then $p$ corresponds to a $\Gamma$-orbit of $[a:b:1]$ with $a,b\in\F_{q^n}$ such that the following holds:
  \begin{enumerate}
    \item If $d=2$, then $b\in\F_{q^6}$ satisfies $b^{q^{4}+q^2+1}=1$, and $a=b^{-q^{2}}$.
    \item If $d=3$, then $a,b\in\F_{q^6}$ satisfy $a^{q^3+1}=b^{q^3+1}=1$.
    \item If $d=4$, then $a\in\F_{q^{12}}$ satisfies $a^{q^{8}+q^4+1}=1$, and $b=a^{-q^4}$.
    \item If $d=5$, then $b\in\F_{q^{30}}$ satisfies $b^{q^{10}-q^5+1}=1$, and $a=b^{q^5}$.
  \end{enumerate}
\end{lemma}

\begin{proof}
	Denote by $g$ the Frobenius map, and list the first few iterates of $\varphi_g$:
  \begin{align*}
    \varphi_g &= [xz:xy:yz],&
    \varphi_g^2 &= [z:x:y],&
    \varphi_g^3 &= [yz:xz:xy],\\
    \varphi_g^4 &= [y:z:x],&
    \varphi_g^5 &= [xy:yz:xz],&
		\varphi_g^6 &=[x:y:z].
  \end{align*}
  Let $p_1\in\p^2(\F_{q^n})$ be the point corresponding to a component of $p$.
  Since $p$ is in general position, the point $p_1$ is not collinear with any of the three coordinate points. In particular, we can write $p_1=[a:b:1]$ with $a,b\in\F_{q^n}^*$.

  If $d=2$, then $[a:b:1]=[1:a^{q^d}:b^{q^d}]$ gives $a=b^{-q^d}$ and $b=(ab^{-1})^{q^d}=b^{-(q^{2d}+q^d)}$, and so $b^{q^{2d}+q^d+1}=1$.

  If $d=3$, we find $[a:b:1]=[b^{q^d}:a^{q^d}:(ab)^{q^d}]$, which gives $a^{q^d+1}=b^{q^d+1}=1$.

  If $d=4$, then $[a:b:1]=[b^{q^d}:1:a^{q^d}]$ implies $b=a^{-q^d}$ and $a=(ab^{-1}){q^d}=a^{-(q^{2d}+q^d)}$, and so $a^{q^{2d}+q^d+1}=1$.

  If $d=5$, we find $[a:b:1]=[(ab)^{q^d}:b^{q^d}:a^{q^d}]$, which implies $a=b^{q^d}$, and $b=(ba^{-1})^{q^d}=b^{q^d-q^{2d}}$, giving $b^{q^{2d}-q^d+1}=1$.
\end{proof}

\begin{remark}\label{rem:rootsOfUnity}
  Fortunately, roots of unity can be listed efficiently:
  Let $\F_{q^n}$ be a finite field and let $a\in\F_{q^n}^*$ be a multiplicative generator.
  Let $m\geq 1$ be an integer that divides $q^n-1$ (otherwise, there does not exist a $m$-th root of unity).
  Then the subgroup of elements $x\in\F_{q^n}^*$ such that $x^m=1$ is generated by $a^{\frac{q^n-1}{m}}$.
\end{remark}

\begin{algorithm}\label{alg:X6points}
  Let $\k=\F_q$ and $X_6\in\Dl_6$ with $\k$-structure $(\p^2,\Gamma)$ from Lemma~\ref{lem:kStructureX6}.
  The points of degree $d\in\{2,3,4,5\}$ on $X_6$, up to $\Aut_\k(X_6)$, can be determined as points on $(\p^2,\Gamma)$ using the following algorithm, where $n=\lcm(6,d)$:
  \begin{description}
    \item [Step 1] List the elements $[a:b:1]$ for $a,b\in\F_{q^n}$ that satisfy the condition of Lemma~\ref{lem:X6points} using Remark~\ref{rem:rootsOfUnity}. Form its $\Gamma$-orbit and check that it is of size $d$.
    \item [Step 2] Take only one $\Gamma$-orbit up to the action of $\Aut_\k(X_6)$ seen via Lemma~\ref{lem:X6auto}.
    \item [Step 3] Check that the $\Gamma$-orbit is in general position using Lemma~\ref{lemma:MatrixForComputation}.
  \end{description}
\end{algorithm}

\begin{lemma}\label{lem:F2pointsX6}
  Let $\k=\F_2$ and $X_6\in\Dl_6$ with $\k$-structure $(\p^2,\Gamma)$ from Lemma~\ref{lem:kStructureX6}.
  Any point of degree $d\in\{2,3,4,5\}$ on $X_6$, up to $\Aut_\k(X_6)$, corresponds to one of the following points $p$ on $(\p^2,\Gamma)$:
  \begin{enumerate}
    \item If $d=2$, there is a multiplicative generator $a\in\F_{2^{6}}^*$ such that
    \[p=[ a^{51}: a^3:  1].\]
    \item If $d=3$, there is a multiplicative generator $a\in\F_{2^{6}}^*$ such that
    \[p=[ 1: a^{7k}: 1],\,  k\in\{1,3\}.\]
    \item If $d=4$, there is a multiplicative generator $a\in\F_{2^{12}}^*$ such that
    \[p=[ a^{15k} : a^{-16\cdot 15k} :1 ],\,  k\in\{1, 3, 7, 9\}.\]
    \item If $d=5$, there is a multiplicative generator $a\in\F_{2^{30}}^*$ such that
    \[p=[a^{32rk}: a^{rk}:1 ],\]
    where $r=\frac{2^{30}-1}{2^{10}-2^5+1}$ and $k\in\{1, 3, 5, 7, 9,15, 17,19,23,25,29\}$.
  \end{enumerate}
\end{lemma}

\begin{proof}
 We have implemented Algorithm~\ref{alg:X6points} for $\k=\F_2$. In each step, the obtained lists have the following sizes:
 \begin{center}
   \begin{tabular}{M || M | M | M } 
     d & \text{Step }1 & \text{Step }2 & \text{Step }3 \\
     \hline
      2& 18 & 1 & 1 \\
      3& 78 & 3& 2 \\
      4& 252 & 4 & 4 \\
      5& 990 & 11 & 11.
   \end{tabular}
 \end{center}
\end{proof}

\begin{remark}
  For $\k=\F_3$ on $X_6$, we find the following sizes:
  \begin{center}
    \begin{tabular}{M || M | M | M | M } 
      d & \text{Step }1 & \text{Step }2 & \text{Step }3 \\
      \hline
      2 & 84 &2 & 2  \\
      3 & 777 & 9 & 7  \\
      4 & 6552 & 42 & 42  \\
      5 & 58800 & 280 & 280.
    \end{tabular}
  \end{center}
\end{remark}

\section{Birational maps}\label{sec:BirationalMaps}

Let $X$ be a minimal del Pezzo surface that is rational.
We say that two Sarkisov links $\chi,\chi'\colon X\dashrightarrow X$ are \emph{equivalent} if there exists $\alpha,\beta\in\Aut_\k(X)$ such that $\chi'=\beta\circ\chi\circ\alpha$.

After having seen tools to classify points in del Pezzo-general position in the previous section, we explain how to describe Sarkisov links \emph{in practice}.

\subsection{Symmetric Sarkisov links dominated by a cubic or quartic surface}

\begin{proposition}\label{prop:symmetricDescription}
  Let $\k$ be a perfect field, and $\chi\colon X\link aa X$ a symmetric Sarkisov link on a rational surface $X\in\Dl$, such that its minimal resolution $Z$ is a del Pezzo surface of Picard rank $2$ and degree $3$ or $4$.
  Let $(Y,\Gamma)$ be a $\k$-structure of $X$ over $L/\k$, given by $\rho\colon X_L\to Y_L$.
  Write $\widehat\chi=\rho\chi\rho^{-1}\colon Y_L\dashrightarrow Y_L$.
  Then, we are in one of the following cases:
  \begin{enumerate}
    \item\label{it:symmetricDescription--P2-66} If $\chi\colon\p^2\link66\p^2$ centered at $p=\{p_1,\ldots,p_6\}\subset \p^2(\bar\k)$, then $\widehat\chi=\chi$ is given by $[x:y:z]\mapsto[f_0:f_1:f_2]$ where $f_0,f_1,f_2\in\k[x,y,z]$ are a web of quintics with a singularity at $p_i$ for $i=1,\ldots,6$.
    Geometrically, $\chi$ is given by the blow-up of $p$, followed by the contraction of the six conics through five of the six geometric points.
    \item\label{it:symmetricDescription--X6} If $X=X_6$, let $b_1,b_2,b_3\in\p^2$ be the base points of the $\k$-structure.
    \begin{enumerate}
    	\item\label{it:symmetricDescription--X6-33} If $\chi\colon X_6\link 33 X_6$ centered at $p=\{p_1,p_2,p_3\}$, then $\widehat\chi$ is given by
    	$[x:y:z]\mapsto[f_0:f_1:f_2]$ where $f_0,f_1,f_2\in L[x,y,z]$ are a web of quintics with a singularity at $p_i$ and at $b_i$ for $i=1,2,3$.
    	Moreover, $\chi$ is given by the blow-up of $p$, followed by the contraction of the three $(-1)$-curves corresponding to the conics through $b_1,b_2,b_3$ and two of the three $p_i$.
    	\item\label{it:symmetricDescription--X6-22} If $\chi\colon X_6\link22 X_6$ centered at $p=\{p_1,p_2\}$, then $\widehat\chi$ is given by
    	$[x:y:z]\mapsto[f_0:f_1:f_2]$ where $f_0,f_1,f_2\in L[x,y,z]$ are a web of cubics going through $p_1$ and $p_2$ and $b_1,b_2,b_3$, with a singularity at $p_1$.
    	Moreover, $\chi$ is given by the blow-up of $p_1,p_2$, followed by the contraction of the line through $p_1$ and $p_2$, and the conic through $p_1,p_2$ and $b_1,b_2,b_3$.
    \end{enumerate}
    \item\label{it:symmetricDescription--X8-44} If $\chi \colon X_8\link44 X_8$ centered at $p=\{p_1,\ldots,p_4\}$, then $\widehat\chi$ is given by $([y_0:y_1],[z_0:z_1])\mapsto ([f_0:f_1],[g_0:g_1])$ where $f_0,f_1\in L[y,z]$ is of bidegree $(1,2)$ going through $p_i$, and $g_0,g_1\in L[y,z]$ of bidegree $(2,1)$ going through $p_i$, for $i=1,\ldots,4$.
    Moreover $\widehat\chi$ is given by the blow-up of $p$, followed by the contraction of the four diagonals going through three of the four geometric points.
  \end{enumerate}
\end{proposition}

\begin{proof}
  \ref{it:symmetricDescription--P2-66}: This is case $X=X'=\p^2$, $d=6$ of Lemma~\ref{lemma:DescriptionOfBertiniEtc}.

  \ref{it:symmetricDescription--X6}\ref{it:symmetricDescription--X6-33} This is again case $X=X'=\p^2$, $d=6$ of Lemma~\ref{lemma:DescriptionOfBertiniEtc}.

  \ref{it:symmetricDescription--X6}\ref{it:symmetricDescription--X6-22} This is case $X=X'=\p^2$, $d=5$ of Lemma~\ref{lemma:DescriptionOfBertiniEtc}, where $E_1$ is the exceptional divisor of $p_1$.

  \ref{it:symmetricDescription--X8-44}: This is case $X=X'=\p^1\times\p^1$, $d=4$ of Lemma~\ref{lemma:DescriptionOfBertiniEtc}.
\end{proof}

\begin{example}[$\p^2\link66\p^2$ over $\k=\F_2$]\label{Ex:P2-66}
  Let $p$ be the point corresponding to the Galois orbit of $[a^k:a^9:1]$ as in Lemma~\ref{lem:F2pointsP2} where the minimal polynomial of $a$ is $x^6+x^4 + x^3 + x + 1$.
  Then, the web of quintics with $p$ as singular points gives $[x:y:z]\mapsto[f_0:f_1:f_2]$, where
  \begin{align*}
    f_0&=x^5 +x^4z + x^3(y^{2} + y z) + x^2(y^{3} + y^{2} z + z^{3}) +x(y^{3} z + y^{2} z^{2} + z^{4}) + y^{4} z,\\
f_1&=x^4 y + x^2(y^{2} z + y z^{2} + z^{3}) + x(y^{4} + y^{2} z^{2} + y z^{3}) + y^{4} z,\\
f_2&= x^4 z + x^2(y^{2} z + y z^{2}) + x(y^{4} + y^{3} z + y^{2} z^{2} + z^{4}) + y^{4} z
  \end{align*}
  if $k=2$, and if $k=12$
  \begin{align*}
    f_0&=x^5 +x^3(y^{2} + y z + z^{2}) + x(y^{2} + y z + z^{2})^2+ y^{5} + y^{4} z + z^{5},\\
    f_1 &= x^4 y + x^2y(y^{2} + y z + z^{2}) + xy(y^{3} + y z^{2} +z^{3}) + y^{5} + y^{4} z + z^{5},\\
    f_2 &= x^4z + x^2z(y^{2} + y z + z^{2})+xz(y^{3} + y z^{2} + z^{3}) + y^{5} + y^{4} z + z^{5}.
  \end{align*}
  One can check that both are involutions.
\end{example}

\begin{example}[$X_6\link22 X_6$ over $\k=\F_2$]\label{Ex:X6-22}
    Let $p$ be the point corresponding to the Galois orbit of $p_1=[a^{51}:a^3:1]$ of size $2$, as in Lemma~\ref{lem:F2pointsX6}, where the minimal polynomial of $a$ is $x^6 + x^4 + x^3 + x + 1$.
    Then, the web of cubics going through $p$ and the three coordinate points with singular point $p_1$ gives $\widehat f\colon [x:y:z]\mapsto[f_0:f_1:f_2]$, where
    \begin{align*}
      f_0&= x^2( a^{24} y + a^{ 27} z) +  x (a^{ 18}y^2  + a^{ 3} y z + a^{60 } z^2 )+ yz(a^{51 } y + a^{54 } z)\\
      f_1& = x^2(a^{39}y + a^{15 } z) + x(a^{33 }  y^2  +  y z + a^{3 } z^2) + yz(a^{21 } y + a^{ 60} z) \\
      f_2 &= x^2 (y+ a^{39 } z) + x(a^{ 48} y^2  + a^{51 } y z+ a^{9 } z^2) + yz(a^{18 } y + a^{57 }  z)
    \end{align*}
    Moreover, one can check that $\widehat f$ commutes with $\Gamma$, and it is an involution.
\end{example}

\begin{example}[$X_6\link33 X_6$ over $\k=\F_2$]\label{Ex:X6-33}
    Let $p$ be the point corresponding to the Galois orbit of $p_1=[1:a^{7k}:1]$ for $k\in\{1,3\}$ of size $3$, as in Lemma~\ref{lem:F2pointsX6}, where the minimal polynomial of $a$ is $x^6 + x^4 + x^3 + x + 1$.
    Then, the web of quintics with a singularity at $p_1,p_2,p_3$ as well as at the coordinate points gives $\widehat f\colon [x:y:z]\mapsto[f_0:f_1:f_2]$ is
    in the case $k=1$ given by
    \begin{align*}
      f_0=& x^{3}(a^{35} y^{2}+  y za^{4}+  z^{2}a^{29} ) + x^{2}( y^{3}+  y^{2} za^{43}+  y z^{2}a^{47}+  z^{3}a^{33}) \\
      &+ xyz( y^{2} a^{18}+ y za+  z^{2}a^{57}) + y^{2} z^{2}(ya^{44}+  z a^{20})\\
      f_1=& x^{3}(a^{6} y^{2}+  y za^{39}+  z^{2}a^{17} ) + x^{2}( y^{3}a^{53}+  y^{2} za^{62}+  y z^{2}a^{4}+  z^{3}a^{50}) \\
      &+ xyz( y^{2} a^{16}+ y z a^{46}+ z^{2}a^{9}) + y^{2} z^{2}(ya^{14}+  z)\\
      f_2=& x^{3}(a^{11} y^{2}+  y za^{36}+  z^{2} ) + x^{2}( y^{3}a^{5}+  y^{2} za^{16}+  y z^{2}a^{58}+  z^{3}a^{56}) \\
      &+ xyz( y^{2} a^{30}+ y z a^{59}+ z^{2}a) + y^{2} z^{2}(ya^{24}+  z a^{23})
    \end{align*}
    and in the case of $k=3$, writing $b=a^{21}$ (which satisfies $b^2=b+1$) it is given by
    \begin{align*}
      f_0=&  x^{3} (b^2 y^{2}+ b  y z+ z^{2}) +
  x^{2}(b^2 y^{3}+b y^{2}z +  y z^{2}+ b^2 z^{3})+
 xyz (b y^{2} +  y z+ b^2  z^{2}) +
  y^{2} z^{2}( y + b^2 z)\\
    f_1= & b x^{3}( y^{2} + y z+ z^{2} )+
 b^2 x^{2} (y^{3}+
 y^{2} z+
  y z^{2}+
  z^{3})+
  xyz (y^{2} +
 yz+
   z^{2})+
 b y^{2} z^{2}(y+z)\\
  f_2=& x^{3}(b y^{2}+ b^2  y z+ z^{2})+
  x^{2}(  y^{3}+ b  y^{2} z+  b^2  y z^{2}+   z^{3})+
  xyz  (y^{2} + b y z+ b^2  z^{2})+
  y^{2} z^{2}(y + b z).
    \end{align*}
    Moreover, one can check that in both cases $\widehat f$ commutes with $\Gamma$, and it is an involution.
\end{example}

\begin{proposition}\label{prop:F2CubicQuarticAreInvolution}
  Let $\k=\F_2$, and let $Z$ be a del Pezzo surface of Picard rank $2$ associated to a Sarkisov link $\chi\colon X_d\link aa X_d$ with $K_Z^2=d-a\in\{3,4\}$.
  Then there exists $\alpha\in\Aut_\k(X_d)$ such that $\alpha\circ\chi$ is an involution.
\end{proposition}

\begin{proof}
  The following lists all such $\chi$:

  $\p^2\link66\p^2$: By Lemma~\ref{lem:F2pointsP2} there are exactly two such links, and they can be chosen to be involutions as in Example~\ref{Ex:P2-66}.

  $X_8\link44 X_8$: Such a link does not exist over $\k=\F_2$ by Lemma~\ref{lem:F2pointsX8}.

  $X_6\link 33 X_6$: By Lemma~\ref{lem:F2pointsX6} there are exactly two such links, and they can be chosen to be involutions as in Example~\ref{Ex:X6-33}.

  $X_6\link 22 X_6$: By Lemma~\ref{lem:F2pointsX6} there is a unique such link, and it can be chosen to be an involution as in Example~\ref{Ex:X6-22}.
\end{proof}

\begin{remark}
  The above statement is not true in general, already for $\k=\F_3$:
  Let $a\in\F_{3^6}$ be a root of $x^6-x^4+x^2-x-1$, and consider the Galois orbit of $[a^4:a^{11}:1]$ on $\p^2$ (of size $6$). Taking a web $[f_0:f_1:f_2]$ of quintics with a singularity at the six points, we can check onto which points the conics through five of the points are contracted onto. These are the base points of the inverse map.
  Then, one can check that there is no element of $\Aut_{\F_3}(\p^2)$ sending the base points of the inverse onto the base points.
\end{remark}

\subsection{Counting Sarkisov links over $\F_2$}
\begin{proof}[Proof of Theorem~\ref{thm:CountingSarkisovLinks}]
	Let $Z$ be a rational del Pezzo surface of Picard rank $2$, or in other words, a rank $2$ fibration over $\Spec\k$. By the classification of Sarkisov links, either $Z\simeq\p^1\times\p^1$, or it gives a Sarkisov link $\chi \colon X_1\link ab X_2$ as in the list (\cite{Iskovskikh96}, see also \cite[Figure~1]{LamySchneider}).
  The number of such links, up to equivalence, equals the number of points in (del Pezzo-)general position of degree $a$ on $X_1$, up to $\Aut_\k(X_1)$. This number was computed in Lemma~\ref{lem:F2pointsP2}, (respectively Lemma \ref{lem:F2pointsX8}, \ref{lem:F2pointsX6}, \ref{lem:F2pointsX5}) for $X_1=\p^2$, (respectively $X_1=X_8$, $X_1=X_6$, $X_1=X_5$).

  Moreover, if $Z$ is of degree $1$ (respectively $2$) then the associated link $\chi$ is symmetric, and it is (equivalent to) a conjugate of the Bertini (respectively Geiser) involution on $Z$ (as in \cite[Proposition~4.5]{LamySchneider}).
  If $Z$ is of degree $3$ or $4$ and induces a symmetric link, then $\chi$ is equivalent to an involution by Proposition~\ref{prop:F2CubicQuarticAreInvolution}.
  The only remaining symmetric link is $\p^2\link33\p^2$, which is a quadratic map equivalent to an involution \cite[Corollary~4.2(Q1)]{LamySchneider}.
  This shows that all symmetric Sarkisov links are equivalent to involutions. As a consequence, every such Sarkisov link determines a non-isomorphic del Pezzo surface $Z$ of rank $2$, and so the number of links equals the number of del Pezzo surfaces of rank $2$, up to isomorphism.
\end{proof}

\subsection{Generating the Cremona group $\Bir_{\F_2}(\p^2)$}\label{sec:GeneratorsOfCremonaOverF2}

\begin{remark}\label{rem:KnownFromLamySchneider}
  In \cite{LamySchneider}, the following was proved for any perfect field:
  \begin{enumerate}
    \item\label{it:KnownFromLamySchneider--BertiniGeiser} A symmetric Sarkisov link $X\dashrightarrow X$ whose minimal resolution $Z$ is a del Pezzo surface of degree $1$ (respectively $2$), then the link is conjugate to the Bertini involution on $Z$ (respectively the Geiser involution), up to post-composition with an element of $\Aut_\k(X)$ \cite[Proposition~4.5]{LamySchneider}.
    \item\label{it:KnownFromLamySchneider--comp} The following composition of Sarkisov links lie in $\langle \Aut_\k(\p^2), \JJ_1 \rangle$:
    \begin{enumerate}
      \item $\p^2\link21 X_8\link12 \p^2$ \cite[Corollary~4.2(Q2)]{LamySchneider},
      \item $\p^2\link51 X_5\link15\p^2$ \cite[Lemma~4.6(1) and Lemma~4.13]{LamySchneider},
      \item $\p^2\link21 X_8\link31 X_6\link13 X_8\link12 \p^2$ \cite[Lemma~4.7 and Lemma~4.13]{LamySchneider}.
    \end{enumerate}
    \item\label{it:KnownFromLamySchneider--loop} The composition $\p^2\link51 X_5\link25 X_8\link12\p^2$ can be written as the product of a Geiser involution on a del Pezzo surface of degree $2$ that is given by the blow-up of a point of degree $2$ and a point of degree $5$ in general position on $\p^2$, and elements in $\langle \Aut_\k(\p^2), \JJ_1 \rangle$ \cite[Lemma~4.11 and Lemma~4.19]{LamySchneider}.
  \end{enumerate}
\end{remark}

It is important to note that \cite[Lemma~4.19]{LamySchneider} mentioned in the remark above uses \cite[Lemma~A.5]{LamySchneider}, which in turn uses a lemma that was proven in an earlier version of this text. For completeness, we state it again:

\begin{lemma}\label{lemma:5plus2general}
  Let $\kk$ be a perfect field. Let $\{p_1,\ldots,p_5\}\subset \PPP^2(\bar\kk)$ be a Galois orbit of size $5$, not all five collinear, and $\{q_1,q_2\}\subset \PPP^2(\bar\kk)$ a Galois orbit of size $2$.
  Then, either all seven points lie on a conic, or the seven points are in general position.
\end{lemma}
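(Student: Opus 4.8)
The plan is to reduce ``general position'' on $\PPP^2$ for seven points to the two concrete conditions of Lemma~\ref{lemma:DelPezzoDescription}\ref{item:DelPezzoDescription--generalposition}: no three of the seven points are collinear, and no six of them lie on a conic (the cubic condition is vacuous for seven points). First I would fix the Galois setup: write $G$ for the image of $\Gal(\overline\kk/\kk)$ in its action on the seven points. Then $G$ maps onto a transitive subgroup of $\Sym_5$ (acting on $\{p_1,\ldots,p_5\}$) and onto $\Sym_2$ (acting on $\{q_1,q_2\}$). Since $5$ is prime, the image in $\Sym_5$ contains a $5$-cycle $\sigma$; lifting it to some $g\in G$ (a field automorphism with $g(p_i)=p_{\sigma(i)}$) and replacing $g$ by $g^2$ if $g$ swaps $q_1,q_2$, I obtain an element $h\in G$ inducing a $5$-cycle on the $p_i$ and fixing $q_1$ and $q_2$. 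This element is the engine of the argument.

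Next I would rule out collinear triples in three families. For three of the $p_i$: if a line $\ell$ meets $\{p_1,\ldots,p_5\}$ in a subset $S$ of size $\geq 3$, the identity $\sum_{k=1}^{4}|S\cap\sigma^k S|=|S|^2-|S|$ forces some nontrivial shift with $|S\cap\sigma^k S|\geq 2$; then $\ell$ and $g^k(\ell)$ share two points, so $\ell=g^k(\ell)$ and $S=\sigma^k(S)$, and since $\sigma^k$ is a $5$-cycle this makes $S$ all five indices, contradicting that the $p_i$ are not all collinear. For a triple $\{p_i,q_1,q_2\}$: the line through the Galois orbit $\{q_1,q_2\}$ is defined over $\kk$, so containing one $p_i$ would, by transitivity of $G$ on the $p_i$, contain all five, again a contradiction. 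The remaining family $\{p_i,p_j,q_k\}$ is the main obstacle: fixing $q_1$, each $p_i$ lies on a unique line through $q_1$, and each such line carries at most two of the $p_i$ (else three $p_i$ are collinear), so there are at most two collinear triples through $q_1$; but applying the powers $h^m$ to one such triple yields five \emph{distinct} pairs $\{h^m p_i,\,h^m p_j\}$, all collinear with $q_1$, a contradiction. Hence no three of the seven points are collinear.

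Finally I would treat conics. As no three $p_i$ are collinear, the five points lie on a unique conic $C_5$, which is $G$-invariant and therefore defined over $\kk$. If $C_5$ passes through $q_1$ it also passes through $q_2=\sigma(q_1)$, so all seven points lie on $C_5$; this is the first alternative. Otherwise $C_5$ avoids both $q_i$, and I must check that no six of the seven lie on a conic. Omitting some $q_j$ leaves the five $p_i$ with one $q$; any conic through the five $p_i$ is $C_5$, which misses the $q_i$, so this is impossible. Omitting some $p_\ell$ leaves four $p_i$ and $q_1,q_2$ on a putative conic $C$; applying $g$ (which permutes the $p_i$ by a $5$-cycle and preserves $\{q_1,q_2\}$) gives a conic $g(C)$ through a different four $p_i$ and again $q_1,q_2$, and $C,g(C)$ share five points with no four collinear, so they coincide; the common conic is then $g$-invariant and meets all five $p_i$ and both $q_i$, forcing all seven onto a conic, contrary to the present case. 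Thus no six points lie on a conic, and the seven points are in general position. The two delicate points are the existence of the $q$-fixing $5$-cycle $h$ and the counting bound ``at most two collinear triples through $q_1$''; the rest is bookkeeping with the $5$-cycle and the uniqueness of a conic through five points no four of which are collinear.
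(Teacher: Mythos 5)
Your proof is correct, and its overall skeleton matches the paper's: the same bad configurations are excluded (three $p_i$ collinear; a $p_i$ on the line through $q_1,q_2$; a $q_k$ on a line $p_ip_j$; the two $q_j$ on a conic with four of the $p_i$), in each case by moving the configuration with a Galois element acting as a $5$-cycle on the $p_i$ and using that two lines, or two conics among points with no four collinear, sharing too many points must coincide. The genuine difference is in how the Galois elements are produced. The paper extracts \emph{two} auxiliary elements from the claimed decomposition $\Gal(LF/\kk)\simeq\Gal(L/\kk)\times\Gal(F/\kk)$ of the compositum of the two splitting fields: an element $\tau$ that $5$-cycles the $p_i$ and fixes $q_1,q_2$, and an element $\sigma$ that fixes every $p_i$ and swaps $q_1,q_2$; the latter disposes of the case $q_1\in L_{ij}$ in one line (then $q_2=\sigma(q_1)\in\sigma(L_{ij})=L_{ij}$, so $L_{ij}$ is the $\kk$-line through the $q_j$ and one is back in the previous case). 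You construct only the first element, as $h=g^2$ where $g$ lifts a $5$-cycle (squaring kills a possible swap of $q_1,q_2$ and, since $5$ is odd, still induces a $5$-cycle), and you replace the paper's one-line use of $\sigma$ by a counting argument: powers of $h$ give five distinct pairs $\{p_i,p_j\}$ collinear with $q_1$, whereas the five points $p_i$ can supply at most two such pairs on lines through $q_1$. This costs slightly more work in that one case but buys genuine robustness: the paper's direct-product claim is asserted without proof and can fail over a general perfect field --- nothing prevents the quadratic field $L$ from sitting inside the splitting field $F$ of the quintic orbit (e.g.\ $\Gal(F/\kk)$ dihedral of order $10$ over $\kk=\QQQ$), in which case every Galois element fixing all $p_i$ also fixes $q_1$ and $q_2$ and the paper's $\sigma$ does not exist. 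The decomposition does hold in the paper's intended application to finite fields, where all fields in sight are the $\FFF_{q^d}$; but your argument proves the lemma unconditionally in the stated generality, so it is not merely a different route --- it closes a small gap in the paper's own proof.
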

\begin{proof}
  Let $L/\kk$ be the splitting field of degree $2$ of $q_1,q_2$, and let $F/\kk$ be the splitting field of degree $5$ of $p_1,\ldots,p_5$.
  Let $\sigma$ be the generator of $\Gal(L/\kk)$.
  Since $\Gal(F/\kk)$ is a transitive subgroup of the symmetric group $S_5$ and $5$ is prime, it contains a $5$-cycle, which we will call $\tau$. We have that $\tau(q_i)=q_i$ for $i=1,2$. Up to reordering the $p_i$, we can assume that $\tau$ acts as $(12345)$ on the $p_i$.

  Assume now that three different points $p_i,p_j,p_k$ are on a line $L_{ijk}$.
  We may assume that $\{i,j,k\}$ are either $\{1,2,3\}$ or $\{1,2,4\}$.
  In the first case, $\tau(L_{123})=L_{234}$ and $\tau^2(L_{123})=L_{345}$ and so all five points are on the same line, a contradiction.
  In the second case, $\tau(L_{124})=L_{235}$ and $\tau^2(L_{124})=L_{341}$ and again all five points are on the same line, a contradiction.

  If one of the $p_i$ is on the line through $q_1$ and $q_2$, which is a line defined over $\kk$, then all the $p_i$ are on that line, a contradiction.
  If $q_1$ lies on the line $L_{i,j}$ through $p_i$ and $p_j$, choose $1\leq n\leq 4$ such that $\tau^n(i)=j$.
  Then $q_1=\tau^n(q_1)\in \tau^n(L_{ij})=L_{j,k}$, where $k=\tau^{2n}(i)\neq i$. So $p_k$ lies on the line through $q_1$ and $p_j$, which is the line $L_{ij}$ containing also $p_i$, and so the three points $p_i,p_j,p_k$ are on a line, a contradiction.

  As no three points of the $p_i$ are collinear, there exists a unique conic through the five points, and this conic is defined over $\kk$.
  If $q_1$ lies on this conic, then $q_2$ lies on it, too. So all seven points are on a conic.
  Assume that $q_1$ and $q_2$ lie on a conic with four of the points of the Galois orbit of size $5$, say all except $p_i$, and call this conic $C_i$.
  We can assume that $i=1$.
  So $\tau(C_1)=C_2$, which is a conic containing the five points $q_1, q_2$, $p_3$, $p_4$, $p_5$, all of whom also lie on $C_1$. Hence $C_1=C_2$ and all seven points lie on a conic.
\end{proof}

\begin{lemma}\label{lemma:TwoGeiserLoops}
  Let $\k=\F_2$. There are exactly two pairs of a Galois orbit of size $5$ and one of size $2$ on $\PPP^2$ in general position, up to $\Aut_{\FFF_2}(\PPP^2)$.
\end{lemma}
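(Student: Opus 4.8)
The plan is to fix the orbit of size $5$ up to a coordinate change, rewrite the general-position condition as a single condition on the conic through it, and then enumerate the finitely many orbits of size $2$. By Lemma~\ref{lemma:UniqueOrbitSize5} every Galois orbit of size $5$ with no three collinear points is $\Aut_{\FFF_2}(\PPP^2)$-equivalent to a single one, which I would place on the smooth conic $C=\{x^2=yz\}$ (defined over $\FFF_2$) and call $\PPPP$. Since in any set of seven points in general position no three of the five are collinear, every admissible pair has its orbit of size $5$ equivalent to $\PPPP$; hence it suffices to count, up to the stabiliser $G=\{\,g\in\PGL_3(\FFF_2)\mid g(\PPPP)=\PPPP\,\}$, those orbits of size $2$ that complete $\PPPP$ to seven points in general position.

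Next I would use Lemma~\ref{lemma:5plus2general} to replace ``general position'' by one closed condition: for $\PPPP$ (no three collinear) and any orbit $\QQQQ=\{q_1,q_2\}$ of size $2$, the seven points are in general position unless all seven lie on a conic, and since $C$ is the unique conic through $\PPPP$ this happens exactly when $\QQQQ\subset C$. Thus $\QQQQ$ is admissible if and only if $\QQQQ\not\subset C$. Now I enumerate the orbits of size $2$: from the $|\PPP^2(\FFF_4)|-|\PPP^2(\FFF_2)|=21-7=14$ non-rational $\FFF_4$-points there are exactly seven such orbits, and from $|C(\FFF_4)|-|C(\FFF_2)|=5-3=2$ exactly one of them lies on $C$. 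This leaves six admissible orbits of size $2$, which I would list explicitly in the coordinates above.

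It remains to determine $G$ and count its orbits on these six admissible orbits. Any automorphism fixing $\PPPP$ fixes the unique conic $C$ through it, so $G$ sits inside $\mathrm{Stab}(C)\cong\PGL_2(\FFF_2)$ and is precisely the stabiliser of the degree-$5$ closed point $\PPPP$ for the action of $\PGL_2(\FFF_2)$ on $C\cong\PPP^1$; this is the action on the six irreducible quintics analysed in the proof of Lemma~\ref{lemma:UniqueOrbitSize5}. Carrying out this finite computation—fixing $G$ and its induced permutation of the six admissible orbits of size $2$—and counting the resulting classes yields the number of pairs up to $\Aut_{\FFF_2}(\PPP^2)$. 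The main obstacle is exactly this last step: pinning down $G$ and its action precisely enough to count orbits, where the characteristic-$2$ behaviour of conics (each smooth conic carries a nucleus that is fixed by $\mathrm{Stab}(C)$) must be tracked carefully. Everything preceding it is a clean reduction through the two cited lemmas together with the point counts above.
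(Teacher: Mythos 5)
Your reduction is correct, and on the decisive point it is actually more careful than the paper's own argument: after fixing $\PPPP$ by Lemma~\ref{lemma:UniqueOrbitSize5}, classes of pairs are in bijection with orbits of the admissible size-$2$ orbits under the stabiliser $G=\{g\in\PGL_3(\FFF_2)\mid g(\PPPP)=\PPPP\}$; the replacement of ``general position'' by ``$\QQQQ\not\subset C$'' via Lemma~\ref{lemma:5plus2general} is valid, and the count of six admissible $\QQQQ$ (seven orbits of size $2$, exactly one lying on $C$) is right. The genuine gap is that you stop exactly at the step carrying all the content of the lemma: you never determine $G$, and you never count its orbits on the six admissible $\QQQQ$.

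Filling that gap overturns the statement. As you note, $G$ is the stabiliser of $\PPPP$ inside $\mathrm{Stab}(C)\cong\PGL_2(\FFF_2)$, acting on $C\cong\PPP^1$; and the proof of Lemma~\ref{lemma:UniqueOrbitSize5} shows precisely that this order-$6$ group permutes the six degree-$5$ Galois orbits on $C$ transitively, hence simply transitively, so $G$ is \emph{trivial}. (Directly: in characteristic $2$ an involution in $\PGL_2(\FFF_2)$ has a unique fixed point on $\PPP^1(\overline{\FFF_2})$, and that point is rational, while an element of order $3$ fixes a conjugate pair of $\FFF_4$-points; neither can preserve a five-element set of degree-$5$ points.) With $G$ trivial, two pairs $(\PPPP,\QQQQ)$ and $(\PPPP,\QQQQ')$ are equivalent only if $\QQQQ=\QQQQ'$, so your method yields \emph{six} classes — the three ``tangent-line'' orbits and the three ``secant-line'' orbits are pairwise inequivalent — not two. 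The discrepancy is not a flaw in your approach but in the paper's proof: there the admissible $\QQQQ$ are counted up to $\Aut_{\FFF_2}(C)$, which indeed has two orbits (tangent type and secant type), but the elements of $\Aut_{\FFF_2}(C)$ realizing those identifications do not fix $\PPPP$ — they move it to a different quintic orbit on $C$ — so they are not equivalences of pairs. A global check confirms six: $\PGL_3(\FFF_2)$, of order $168$, acts simply transitively on the $28\cdot6=168$ orbits of size $5$ in general position, so every pair has trivial stabiliser and the $168\cdot 6=1008$ admissible pairs fall into $1008/168=6$ classes. In short, your plan, carried out honestly, proves the lemma with ``two'' replaced by ``six''; the statement as printed cannot be proved, and the error propagates to the count of Geiser involutions of type $5+2$ used in Theorem~\ref{theorem:ListOfExplicitMaps}.
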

\begin{proof}
  Recall that in Lemma~\ref{lem:P2pointdeg5} we have seen that there exists a unique Galois orbit $\PPPP_5$ of size $5$ in general position on $\p^2$ over $\F_2$, and let $C$ be the conic through it.  Let $\PPPP_2$ be a Galois orbit of size $2$ and let $L$ be the line through it.
  Note that on any conic over $\FFF_2$ there are exactly three $\FFF_2$ points, say $p_1$, $p_2$ and $p_3$, and on any line over $\FFF_2$ there is exactly one Galois orbit of size $2$.
  So out of the seven $\FFF_2$-lines in $\PPP^2$, exactly three are tangent to $C$, say $T_1$, $T_2$ and $T_3$, and exactly three cut $C$ at two $\FFF_2$-points, say $L_1$, $L_2$, and $L_3$. The seventh line cuts $C$ at two points that form a Galois orbit of size $2$, so by assumption this is not the line $L$.
  Note that $\Aut_{\FFF_2}(C)\subset \Aut_{\FFF_2}(\PPP^2)$ acts transitively on the three points $p_1,p_2$ and $p_3$, and it fixes the two sets $\{T_1,T_2,T_3\}$ and $\{L_1,L_2,L_3\}$.
  Therefore, there are exactly two different Galois orbits of size $2$, one coming from the size $2$ orbit on $T_1$, and one coming from $L_1$.
\end{proof}

\begin{lemma}\label{lem:CountingLoops}
  Let $\k=\F_2$. Then there are exactly two isomorphism classes of del Pezzo surfaces of degree $2$ and Picard rank $3$ that can be realised as the blow-up of $\p^2$ at one point of degree $2$ and one point of degree $5$.
\end{lemma}

\begin{proof}
  Lemma~\ref{lemma:TwoGeiserLoops} gives that there are at most $2$ such del Pezzo surfaces $Z_i$ given by the blow-up of a point $p_i$ of degree $2$ and a point $q_i$ of degree $5$ for $i=1,2$. In \cite[Figure~B.12]{LamySchneider} one sees that there are exactly two contractions $Z_i\to\p^2$, and the Geiser involution on $Z_i$ exchanges the two. As there is no element of $\Aut_\k(\p^2)$ that sends $p_1,q_1$ onto $p_2,q_2$, the two surfaces $Z_1, Z_2$ are not isomorphic.
\end{proof}

\begin{remark}\label{remark:PGL3F2}
	Note that $\Aut_{\FFF_2}(\PPP^2)\simeq\PGL_3(\FFF_2)$ has $168$ elements, and it is generated by two matrices.
	(Every finite simple group is generated by two elements, see \cite{steinberg62, miller01,AG84}.)
	Explicitly, the generators can be chosen to be
	$A=\left(\begin{smallmatrix}1 & 1 & 0\\ 0 & 1 & 0\\ 0 & 0 & 1\end{smallmatrix}\right)$ and
	$B=\left(\begin{smallmatrix}0 & 0 & 1\\ 1 & 0 & 0\\ 0 & 1 & 0\end{smallmatrix}\right)$.
	This can be seen either by translating Steinberg's generators into matrix form, or by taking from \cite{BrownEzra09} the three generators $B$,
	$B_2=\left(\begin{smallmatrix}1 & 0 & 0\\ 0 & 0 & 1\\ 0 & 1 & 0\end{smallmatrix}\right)$, and
	$B_3=\left(\begin{smallmatrix}1 & 0 & 0\\ 0 & 0 & 1\\ 0 & 1 & 1\end{smallmatrix}\right)$ of $\PGL_3(\FFF_2)$ and observing that
	\begin{align*}
		B_2=& (AB)^3 B (AB)^3,\\
		B_3=& (BA)^3 B (BA)^2.
	\end{align*}
\end{remark}

In an earlier version of this text, Theorem~\ref{thm:F2GeneratedByInvolutions} has been proved independently. To reduce overlap, we give here a proof depending on \cite{LamySchneider}.

\begin{proof}[Proof of Theorem~\ref{thm:F2GeneratedByInvolutions}]
  We prove that for $\k=\F_2$, the plane Cremona group $\Bir_\k(\p^2)$ is generated by involutions.
  Let $H\subset\Bir_\k(\p^2)$ be the subgroup generated by all involutions in $\Bir_\k(\p^2)$.

  By Proposition~\ref{proposition:GeneratorsAnyPerfectField}, $\Bir_\k(\p^2)$ is generated by $\JJ_1$, all $J_\PPPP$ where $\PPPP\subset \p^2$ is a set of four $\bar\k$-points, no three collinear, that forms either one Galois orbit of size $4$, or two Galois orbits of size $2$, and the group $G_{\leq 8}$ (containing $\Aut_\k(\p^2)$).
  By Lemma~\ref{lemma:SameFourPoints} there exists a unique point of degree $4$ and a unique set of two points of degree $2$ (no three collinear), up to pre- and postcomposition with $\Aut_\k(\p^2)$. Notation~\ref{not:EquationsX} give an explicit choice $\pi_i\colon\p^2\dashrightarrow\p^1$ with base points $\PPPP_i$ of such conic fibrations, for $i=2,4$.

  By Proposition~\ref{prop:ParametrizationFiberingType} for the case $\k=\F_2$, any $\varphi\in\Bir_\k(\p^2,\pi_i)=\JJ_i$ can be written as a composition of involutions of the form $\chi_{i,[u:v]}$ as in Example~\ref{ex:ExplicitMaps}, elements in $\Aut_\k(\p^2)$, and elements in $\JJ_1$.

  It remains to find a number $d$ such that there exist $d$ maps in $G_{\leq8}$ such that they together with $\JJ_1, \Aut_\k(\p^2)$ and $\JJ_2$, $\JJ_4$ generate $\Bir_\k(\p^2)$.
  By Remark~\ref{rem:KnownFromLamySchneider} it is enough to compute the number of symmetric Sarkisov links, and the number of del Pezzo surfaces of degree $2$ that are obtained as the blow-up of $\p^2$ at a point of degree $2$ and a point of degree $5$. There exist exactly $2$ of the latter (Lemma~\ref{lem:CountingLoops}).
  We count the number of symmetric Sarkisov links from Theorem~\ref{thm:CountingSarkisovLinks}:
  \[38+18+11+12+10+5+4+4+2+2+0+1+1=108.\]
  This gives $110$ maps. However, the Sarkisov link $\chi\colon\p^2\link33\p^2$ lies in $\langle \Aut_\k(p^2),J_1\rangle$ \cite[Figure~B.3, relation $\PPPP(\p^2;1,3)$]{LamySchneider}, and so it is redundant.
  By Remark~\ref{remark:PGL3F2}, $\Aut_{\F_2}$ is generated by the two automorphisms $[x:y:z]\mapsto [x+y:y:z]$ and $[x:y:z]\mapsto [z:x:y]$.
  This gives that $\Bir_{\F_2}(\p^2)$ is generated by $\JJ_1, \JJ_2,\JJ_4$ and $2+107+2=111$ additional maps.

  Finally, $\JJ_1$ is generated by involutions (by Lemma~\ref{lemma:GeneratorsOfJ1}, $\JJ_1\simeq \PGL_2(\k(t))\rtimes\PGL_2(\k)$, and $\PGL_2(K)$ is generated by involutions for any field $K$, by Lemma~\ref{lemma:GeneratorsOfPGL2}),
  $\Aut_{\F_2}$ is generated by involutions (Lemma~\ref{lemma:PGL3GeneratedByInvolutions}),
  the $107$ symmetric Sarkisov links are equivalent to involutions (Theorem~\ref{thm:CountingSarkisovLinks}), so after pre- or postcomposing with automorphisms of $X\in\Dl$, which can be decomposed into elements in $\JJ_1$ and $\Aut_\k(\p^2)$ by Remark~\ref{rem:KnownFromLamySchneider}\ref{it:KnownFromLamySchneider--comp}, the symmetric Sarkisov links can be chosen to be involutions,
  and the two Geiser involutions on the del Pezzo surface of degree $2$ obtained as the blow-up of $\p^2$ at a point of degree $2$ and one of degree $5$ are involutions.
  Therefore, $\Bir_{\F_2}(\p^2)$ is generated by involutions.
\end{proof}

\end{document}